 \theoremstyle{plain}
\newtheorem{theorem}{Theorem}
 \newtheorem{lemma}{Lemma}
\newtheorem{proposition}{Proposition}
\def\ron1#1{\textcolor[named]{Magenta}{\large \bf $\mathcal{R}o$: }\textcolor[named]{Plum}{\large \bf {#1}}}
  \title[ Lines of Principal Curvature on Quadrics
  of  $\mathbb  R^4$  ]{    Lines of Curvature on  Quadric
   Hypersurfaces  of  $\mathbb R^4$}
  \author{  J. Sotomayor and R. Garcia }
\begin{document}
 
 %
 %
 
  
 \keywords
  {partially umbilic point, quadrics, principal lines, principal configuration.  }
 
 \subjclass{ 53C12, 57R30, 37C15}
 
 \begin{abstract}
 Here are  described the geometric structures  of the lines of principal curvature 
 and  the  partially umbilic singularities  of
 the tridimensional non compact  generic 
 quadric
 hypersurfaces 
  of  ${\mathbb R}^4$.  This includes  the  {\em ellipsoidal hyperboloids}  of one and two sheets  and the   {\em toroidal hyperboloids}. 
  The present study  complements  the analysis of the compact  {\em ellipsoidal hypersurfaces}    carried out  in \cite{bsbm14}.\\
 \end{abstract}

 \maketitle

\section{Introduction}\label{sec:Intro}
 
The first example of
a {\em principal curvature configuration}  on a surface in  $\mathbb R^3$,  consisting
of
the umbilic points (at which the principal curvatures coincide)
and, outside them, by the  foliations 
with 
 the  minimal and maximal
principal curvature lines  was  determined    
 for the case of the ellipsoid  
   $q(x,y,z)=x^2/a^2+y^2/b^2+z^2/c^2=1, \,  a>b>c>0$. 
See   Monge \cite{mon}  and the illustration in figure  \ref{fig:elip} 
of  this  configuration deduced from Dupin theorem
  \cite{spivak}, \cite{struik}. \\

An extension of the principal configuration for the case of ellipsoidal quadric hypersurfaces in
$\mathbb R^4$ was achieved in \cite{bsbm14}. 
This was preceded by   work of Garcia  \cite{garcia-tese}  where  the generic properties of principal configurations on smooth hypersurfaces were established.   There was determined the principal configuration on  the ellipsoid \\
$\bullet  \; Q_0 
=x^2/a^2+y^2/b^2+t^2/c^2 +w^2/d^2= 1$,
 $a > b > c> d > 0, $ in $\mathbb R^4$.   
It was proved that it carries  four closed  regular curves of  
{\em partially umbilic}  points, along which two of  the three  principal curvatures coincide and   whose transversal structures  are of  the Darbouxian type $D_1$, as at  the umbilic points in the ellipsoid
 with 
 $3$ different axes in ${\mathbb R}^3$. See Fig. \ref{fig:elip}.  
 A different proof of this result 
 was  given in \cite{bsbm14}, 
 where a  complete   description of the  principal configurations 
   on all   tridimensional ellipsoids, including those  with  some coincidence in the  semi-axes $a,b,c,d$ and exhibiting isolated umbilic points.

In this paper will  be determined  the principal configurations for
the  
generic  non compact  
quadric hypersurfaces in 
$\mathbb R^4$, extending in one dimension the classical results  for the 
hyperboloids in  $\mathbb R^3$
 illustrated in figures \ref{fig:elip}, \ref{fig:q1folha} and \ref{fig:q2folhas}.

By a generic non-compact 
quadric hypersurface in   $\mathbb R^4$ is meant the  unit  level hypersurface 
  defined (implicitly) by one of the  non-degenarate 
quadratic forms 
$Q_1$, $Q_2$ and $Q_3$ which, after an orthonormal diagonalization and scalar multiplication, can be written as  follows:\\

 \noindent $\bullet \;  Q_1 
=x^2/a^2+y^2/b^2+z^2/c^2 -t^2/d^2= 1$, $\; a  > b  >  c > 0,\;  d>0$.\;  Ellipsoidal Hyperboloid with one sheet,  presented in section  \ref{ss:q1abcd};
\\
$\bullet \;  Q_2 
=x^2/a^2+y^2/b^2-z^2/c^2 -t^2/d^2= 1$, $\; a  > b  > 0,\;  d>  c >0$.\;Toroidal Hyperboloid, developed  in section \ref{ss:q2abcd};
\\
$\bullet \; Q_3
=x^2/a^2-y^2/b^2- z^2/c^2 - t^2/d^2= 1$ $\; a   > 0,\;  b >  c > d> 0$.\; Ellipsoidal Hyperboloid with two sheets, presented in section \ref{ss:q3abcd}.
\\
The positive orientation will be 
 made explicit in each specific case.

This paper is organized as follows: 
Section \ref{ss:prelim} reviews standard  definitions pertinent to principal configurations on hypersurfaces in $\mathbb R^4$. It also explains the
convention, in colors and print patterns,  used   in 
 their 
illustrations.

Section \ref{ss:q0abcd}
includes a 
a review of 
the main results of \cite{bsbm14} pertinent to the ellipsoid $Q_0$  which  will be  helpful for 
 the present paper.

Section \ref{ss:qr3} 
complements the 
the classical knowledge about 
the principal structures on quadric surfaces in $\mathbb R^3$
with results  not found in standard sources \cite{struik} and \cite{spivak}.

In Sections \ref{ss:q1abcd},  \ref{ss:q2abcd} and  \ref{ss:q3abcd}  are described the principal configurations of   generic quadrics $
Q_1$,  $
Q_2$ and  $
Q_3$.
The last section \ref{ss:CoCo} 
points out to the novelty of the  principal structure
of transversal crossing of partially umbilic separatrix surfaces established in this work.

\section{Preliminaries on Principal Configurations and  Color Conventions  \label{ss:prelim}}

The {\em principal configuration} on an oriented hypersurface  in  $\mathbb R^4$, with euclidean scalar product $\langle \cdot, \cdot \rangle$,   consists on the
 umbilic points, at which  the $3$  principal curvatures coincide, 
 the partially umbilic points, at which only $2$  principal curvatures are equal,   and the integral foliations of the
 three principal line fields on the complement of these sets of points, which will be referred to as the { \em principal regular part} of the  hypersurface.

   Recall that the principal curvatures $k_1 \leq k_2 \leq k_3$
 are the eigenvalues of
 the  automorphism $D(-N)$ of the tangent  bundle of  the hypersurface, 
  taking   $N$  as  the  unit normal  which defines  the positive orientation.

Thus the set of partially umbilic points is the union of $\mathcal P_{12}$,   where $k_1 = k_2  <  k_3$,  and of   $ \mathcal P_{23}$,  where $k_3 = k_2  >  k_1$.
The set $\mathcal U$ of umbilic points is defined by  $k_1 = k_2  =  k_3$.

The eigenspaces corresponding to the  eigenvalues $k_i$  will be denoted by ${\mathcal L}_i, i= 1, 2, 3.$
 They are line fields,
 well defined
 and, for 
 quadrics  also   analytic   on the {\em  principal  regular part} of the hypersurface.
 In fact   ${\mathcal L}_1$ is defined and analytic on the complement of
  $ \mathcal U \cup \mathcal P_{12}$,  ${\mathcal L}_3$ is defined and analytic on the complement of
$ \mathcal U \cup  \mathcal P_{23}$  and ${\mathcal L}_2$ is defined and analytic on the complement of
 $ \mathcal U \cup  \mathcal P_{12} \cup  \mathcal P_{23}$.

A change in the orientation of the hypersurface produces a change of sign in the principal curvatures and an exchange of  ${\mathcal L}_1$  and $P_{12}$ into ${\mathcal L}_3$  and $\mathcal  P_{23}$ and viceversa. The line fields $\mathcal L_2$ is preserved. 

A {\em principal chart} $ ( u_1,\, u_2,\, u_3 )$  is one for which the principal line fields  ${\mathcal L}_i$ are spanned by $\partial/\partial u_i , \, i=1, 2, 3$. Often they will appear as {\em parametrizations} $\varphi$   with   $\partial N /\partial u_i  = -  k_i \partial \varphi/\partial u_i , \, i=1, 2, 3$.  Principal charts are also characterized by the fact that the  first and second  fundamental forms $g_{ij} = \langle  \partial \varphi/\partial u_i  , \partial \varphi/\partial u_i \rangle$ and $b_{ij}   =  \langle  \partial ^2\varphi  /\partial u_i  \partial u_j,  N\rangle$   are simultaneously diagonalized. That is  on principal charts $g_{ij}= b_{ij} = 0, \, i \neq j, $ and $k_i =b_{ii} / g_{ii} , i= 1,2,3$

\subsection {Color and Print Conventions for  Illustrations  in this Paper} \label{ss:CC}

The color convention in  figures 
illustrating principal configurations on quadrics 
in $\mathbb R^4$  is as follows.

\begin{itemize}

\item [] Black ( \rule{0.3cm}{.05cm}    \rule{0.07cm}{.05cm}   { \rule{0.3cm}{.05cm} }  \rule{0.09cm}{.05cm}   { \rule{0.3cm}{.05cm}}  \rule{0.09cm}{.05cm} { \rule{0.3cm}{.05cm}}) integral curves of line field  $\mathcal L_1$,

  \item []Blue (\textcolor{blue}{ \rule{0.3cm}{.05cm}} \textcolor{blue}{ \rule{0.3cm}{.05cm} }\textcolor{blue}{ \rule{0.3cm}{.05cm}} \textcolor{blue}{ \rule{0.3cm}{.05cm}} \textcolor{blue}{ \rule{0.3cm}{.05cm}}) integral curves of line field  $\mathcal L_3$,
 
  \item [] Red (\textcolor{red}{ \rule{1.9cm}{.05cm}}): integral curves of line field $\mathcal L_2$,

  \item [] Green (\textcolor{green}{ \rule{1.9cm}{.06cm}}):  Partially umbilic arcs $\mathcal P_{12}$,

    \item [] Light Blue  (\textcolor{Cyan}{ \rule{1.9cm}{.05cm}}):  Partially umbilic arcs $\mathcal P_{23}$,

 \end{itemize}

The following dictionary has been adopted for illustrations of integral curves appearing in  figures
 \ref{fig:pabcd},    \ref{fig:conexao},   \ref{fig:q1folha},   \ref{fig:q2folhas},  \ref{fig:Q2global},  \ 
   \ref{fig:PC_q3_123} of 
    this paper when printed in black and white:
  dashed, for blue; dotted-dashed-dotted, for black; full trace, for red.

\section{Ellipsoid with four different axes  in $\mathbb R^4 $ \cite{bsbm14}} \label{ss:q0abcd}

\begin{lemma}\label{lem:cartaprincipale}
The ellipsoid 
\begin{equation} \label{eq:Q0}
Q_0=\dfrac{x^2}{a^2 } +\dfrac{ y^2}{b^2 } +\dfrac{
z^2}{c^2 } + \dfrac{t^2}{d^2 } - 1=0, \;\;\; a>b>c>d>0,
\end{equation}
has
sixteen  principal charts $(u,v,w)=\varphi_i(x,y,z,t)$, $ 0>u>v>w$, where
$$\varphi_i^{-1}:  (-c^2,-d^2)  \times (-b^2,-c^2) \times (-a^2,-b^2)\to 
\{(x,y,z,t): xyzt\ne 0\}\cap Q_0^{-1}(0) $$
 is defined by equations 
\eqref{eq:pchart1}.

\begin{equation}\label{eq:pchart1} \aligned x^2 &= \dfrac{a^2(a^2 + u)(a^2 + v)(a^2 +w )}{(a^2 - b^2)(a^2 - c^2)(a^2 -
d^2)},
\; y^2 =  \dfrac{b^2(b^2 +u)(b^2 + v)(b^2 + w )}{(b^2 - a^2)(b^2 - c^2)(b^2 - d^2)},\\
z^2 &=\dfrac{ c^2(c^2 +u)(c^2 + v)(c^2 +w )}{(c^2 - a^2)(c^2 - b^2)(c^2 - d^2)},\;
  w^2 =  \dfrac{d^2(d^2 +u)(d^2 + v)(d^2 +w )}{(d^2 - a^2)(d^2 - b^2)(d^2 - c^2)}.
\endaligned \end{equation}

For all $p\in \{(x,y,z,t): xyzt \ne
0\}\cap Q_0^{-1}(0)$ and $ Q_0=Q_0^{-1}(0)$ positively oriented
 by the inward normal directed by 
$-\nabla Q_0$, the principal curvatures satisfy
$0<k_1(p)<k_2(p)<k_3(p)$.

The hyperplanes $x=0$ and $z=0$ are invariant by the foliations $\mathcal F_1 $ and $\mathcal F_2$, while  the hyperplanes  $y=0$ and $t=0$ are invariant by the foliations $\mathcal F_2 $ and $\mathcal F_3.$

\end{lemma}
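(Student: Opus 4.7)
The plan is to recover the chart $\varphi_i^{-1}$ as the map provided by Jacobi elliptic (confocal) coordinates on $\mathbb{R}^4$, and then invoke the four-dimensional Dupin theorem to identify the resulting coordinate surfaces with the principal foliations. Concretely, for a point $(x,y,z,t)$ with $xyzt\neq 0$ on $Q_0$, consider the rational equation in $\lambda$
\begin{equation*}
F(\lambda) := \frac{x^2}{a^2+\lambda}+\frac{y^2}{b^2+\lambda}+\frac{z^2}{c^2+\lambda}+\frac{t^2}{d^2+\lambda}=1.
\end{equation*}
Clearing denominators yields a quartic in $\lambda$ with the obvious root $\lambda=0$; the remaining three roots $u,v,w$ are easily located, using the sign changes of $F$ across its poles $-a^2,-b^2,-c^2,-d^2$, in the open intervals $(-b^2,-a^2)$, $(-c^2,-b^2)$, $(-d^2,-c^2)$. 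Solving the linear system that expresses $x^2,y^2,z^2,t^2$ in terms of the symmetric functions of the four roots of the quartic (a Vandermonde-like computation) produces exactly the formulas \eqref{eq:pchart1}. Fixing a sign choice in each coordinate gives sixteen inverse branches, one on each of the connected components of $\{xyzt\neq 0\}\cap Q_0^{-1}(0)$.

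Next I would verify that $\varphi_i$ is a principal chart. The fastest route is to differentiate the identity $F(\lambda)=1$ with respect to $\lambda$ at each of the three parameters $u,v,w$ and use the resulting expressions for $\partial x/\partial u,\ldots,\partial t/\partial w$ to compute the first fundamental form $g_{ij}=\langle \partial\varphi/\partial u_i,\partial\varphi/\partial u_j\rangle$. The cross-terms $g_{ij}$, $i\neq j$, become sums of partial fractions that collapse by the standard confocal identity
\begin{equation*}
\sum_{\alpha\in\{a^2,b^2,c^2,d^2\}}\frac{1}{(\alpha+u_i)(\alpha+u_j)}\cdot (\text{polynomial numerator}) = 0,
\end{equation*}
which is Dupin's theorem in algebraic form. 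The diagonal entries simplify to
\begin{equation*}
g_{ii}=\frac{(u_i-u_j)(u_i-u_k)}{4(a^2+u_i)(b^2+u_i)(c^2+u_i)(d^2+u_i)},\qquad \{i,j,k\}=\{1,2,3\}.
\end{equation*}
An analogous computation with $N=-\nabla Q_0/\|\nabla Q_0\|$ diagonalizes $b_{ij}$ simultaneously, and the quotient $k_i=b_{ii}/g_{ii}$ comes out as a simple rational function of $u_i$ alone. From the resulting expression one reads $0<k_1<k_2<k_3$ directly using the intervals in which $u,v,w$ lie and the chosen inward orientation.

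Finally, the invariance statement is obtained as a corollary of the parametrization: the coordinate hyperplane $\{x=0\}$ corresponds to $u_i=-a^2$ in exactly one of the three parameters, and similarly for $y=0,z=0,t=0$. Setting that parameter to its boundary value freezes one of the three principal directions and leaves the other two free to trace out an invariant surface. Matching the frozen parameter with the principal curvature formula obtained in the previous step identifies which two foliations $\mathcal{F}_i,\mathcal{F}_j$ leave that hyperplane invariant, yielding precisely the pairings $(\mathcal{F}_1,\mathcal{F}_2)$ for $\{x=0\}\cup\{z=0\}$ and $(\mathcal{F}_2,\mathcal{F}_3)$ for $\{y=0\}\cup\{t=0\}$.

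The main obstacle I anticipate is the bookkeeping in the partial-fraction manipulations that simultaneously diagonalize $g_{ij}$ and $b_{ij}$; the underlying algebra is the same four-dimensional Dupin identity, but care is needed to track the signs coming from $x^2,y^2,z^2,t^2$ being expressed as ratios of products with signed factors, and to confirm that the resulting $k_i$ are positive and ordered exactly as stated under the inward-normal orientation convention.
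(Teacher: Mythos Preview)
Your approach is essentially the same as the paper's: the paper introduces the confocal family $Q(p,\lambda)$, solves the linear system $Q(p,u)=Q(p,v)=Q(p,w)=Q(p,0)=0$ for $x^2,y^2,z^2,t^2$, and then computes the first and second fundamental forms directly to verify simultaneous diagonality and to extract the $k_i$ (the paper does not give a self-contained proof of this particular lemma, but displays the resulting $I$, $II$ and $k_i$ immediately after it and carries out the identical argument in detail for $Q_1,Q_2,Q_3$). One correction: your displayed expression for $g_{ii}$ is missing a factor of $u_i$ in the numerator; compare with the paper's
\[
g_{11}=\frac{1}{4}\,\frac{u\,(u-w)(u-v)}{(a^2+u)(b^2+u)(c^2+u)(d^2+u)},
\]
and similarly for $g_{22},g_{33}$. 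This extra factor is exactly what produces $k_i=-abcd/(u_i\sqrt{-uvw})$ and hence the strict ordering $0<k_1<k_2<k_3$, so be sure to retain it when you carry out the computation.
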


The first fundamental form is given by:
{\small  
\begin{equation} \aligned \label{Ifelip}  I=& \frac 14  \,\frac { \left( u-w \right)  \left( u-v \right) u}{\xi_0(u)} du^2+ \frac 14\, \frac { \left( v-w \right)  \left( u-v \right) (-v)}{  \xi_0(v)}
  dv^2\\
   +& \frac 14\,{\frac { \left( v-w \right)  \left( u-w \right) w}{ \xi_0(w)}}
  dw^2 ,\\
  \xi_0(x)=& \left( {d}^{2
  }+x \right)  \left( {c}^{2}+x \right)  \left( {b}^{2}+x \right) 
   \left( {a}^{2}+x \right).  \endaligned
 \end{equation}
 }
The second fundamental form is given by:

{\small  
\begin{equation} \aligned \label{IIfelip}  II=& -\frac 14 \,{\frac {bcda \left( u-w \right)  \left( u-v \right) }{\sqrt {-uv
w}\;\xi_0(u) }}
 du^2+\frac 14 \,{\frac {dcba \left( v-w \right)  \left( u-v \right) }{\sqrt {-uvw
 }\; \xi_0(v) }}
  dv^2\\
   - &\frac 14 ,{\frac { \left( u-w \right)  \left( v-w \right) abcd}{\sqrt {-uv
   w} \;\xi_0(w) }}
  dw^2 .   \endaligned
 \end{equation}
 }
Therefore,

\begin{equation}\label{eq:kipelip}
k_1=-{\frac {abcd}{u \sqrt {-uvw} }},\;\;\;\;\; k_2=-{\frac {abcd}{v\sqrt {-uvw}}},\;\;\;\; \;k_3=-{\frac {abcd}{w\sqrt {-uvw}}}.\;\;\;\; 
\end{equation}

A theorem proved  in \cite{bsbm14} is reviewed now.

\begin{theorem}  \cite{bsbm14}\label{th:eabcd}
The umbilic set  of the ellipsoid 
  $ 
 Q_0
 $ in equation (\ref{eq:Q0})
 is empty and its partially umbilic set  consists of four closed curves
${\mathcal P}_{12}^1, \; {\mathcal P}_{12}^2,\; {\mathcal P}_{23}^1$ and $ {\mathcal P}_{23}^3$,
 which 
in the chart $(u,v,w)$  defined by 
$$\alpha_\pm (u,v,w)=(au,bv,cw,\pm d\sqrt{1-u^2-v^2-w^2})$$
are given by:  
\begin{equation} \label{eq:elip_hyp_a}
\aligned w=&0,\;{\frac {{u}^{2} \left( {a}^{2}-{d}^{2} \right) }{{a}^{2}-{c}^{2}}}+{
\frac {{v}^{2} \left( {b}^{2}-{d}^{2} \right) }{{b}^{2}-{c}^{2}}}=1
, \;\; \text{ellipse},\\
v=&0,\;  {\frac {{u}^{2} \left( {a}^{2}-{d}^{2} \right) }{{a}^{2}-{b}^{2}}}-{
\frac {{w}^{2} \left( {c}^{2}-{d}^{2} \right) }{{b}^{2}-{c}^{2}}}=1
,\; \text{hyperbole}.\endaligned
\end{equation}

  \noindent i)  \; The principal foliation  ${\mathcal F}_1  $
is singular on ${\mathcal P}_{12}^1\cup  {\mathcal P}_{12}^2,\;$ ${\mathcal F}_3 $
is singular on ${\mathcal P}_{23}^1\cup  {\mathcal P}_{23}^2$  and  ${\mathcal F}_2  $
is singular on ${\mathcal P}_{12}^1\cup  {\mathcal P}_{12}^2 \cup {\mathcal P}_{23}^1\cup  {\mathcal P}_{23}^2$.
 
\noindent  The partially umbilic curves ${\mathcal P}_{12}^1\cup  {\mathcal P}_{12}^2 $
 whose transversal structures are 
of type  $D_1$, have  their  partially  umbilic  separatrix surfaces 
  spanning  a cylinder  $ C_{ 12} $ such that
$\partial C_{12}$ $ =  {\mathcal P}_{12}^1\cup  {\mathcal P}_{12}^2$.

Also, the partially umbilic curves ${\mathcal P}_{23}^1$ and $  {\mathcal P}_{23}^2$
are of type  $D_1$, 
their  partially umbilic separatrix surfaces   span a cylinder  $ C_{23} $ so that $\partial C_{23}=  {\mathcal P}_{23}^1\cup  {\mathcal P}_{23}^2$.

\noindent  All the leaves of    ${\mathcal F}_1  $   outside the cylinder
 $  C_{12}$ are 
 diffeomorphic to  $\mathbb S^1.\;$
 Analogously for the principal foliation ${\mathcal F}_3$ outside the cylinder $C_{23}$. 
 See 
 figure  \ref{fig:pabcd}.
 
    \noindent ii)  \;  The principal foliation ${\mathcal F}_2   $ is singular at
${\mathcal P}_{12}^1\cup  {\mathcal P}_{12}^2 \cup {\mathcal P}_{23}^1\cup  {\mathcal P}_{23}^2$  and there are Hopf bands (cylinders with boundaries consisting  on  two linked closed curves)
 $ H_{123}^1$ and $ H_{123}^2 $ such that $\partial H_{123}^1 ={\mathcal P}_{12}^1\cup {\mathcal P}_{23}^1\;$ and $\partial H_{123}^2 ={\mathcal P}_{12}^2\cup {\mathcal P}_{23}^2\;$, which are partially  umbilic 
 separatrix surfaces. 
 
\noindent All the leaves of ${\mathcal F}_2$,  outside the partially umbilic  separatrix surfaces, are  
diffeomorphic to  $\mathbb S^1.\;$
 See
 figure  \ref{fig:conexao}.  
\end{theorem}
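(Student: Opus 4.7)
The plan is to exploit the global principal chart $\varphi_i^{-1}$ supplied by Lemma \ref{lem:cartaprincipale} together with the explicit formulas \eqref{eq:kipelip}. Because the three chart variables $u,v,w$ lie in the disjoint intervals $(-c^2,-d^2)$, $(-b^2,-c^2)$, $(-a^2,-b^2)$, relations \eqref{eq:kipelip} yield $0<k_1<k_2<k_3$ throughout the open set $\{xyzt\ne 0\}\cap Q_0$. Consequently both $\mathcal U$ and the partially umbilic set must be contained in the union of the coordinate hyperplane sections $\{x=0\},\{y=0\},\{z=0\},\{t=0\}$. Passing to the auxiliary parametrization $\alpha_\pm$ and restricting the principal curvatures to each of these sections in turn, the equalities $k_i=k_j$ translate into polynomial equations in $(u,v,w)$; on $\{t=0\}$ they produce the ellipse and on $\{y=0\}$ the hyperbola of \eqref{eq:elip_hyp_a}, while on $\{x=0\}$ and $\{z=0\}$ no new coincidences occur. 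A direct check that no two of these equations can be satisfied simultaneously establishes $\mathcal U=\emptyset$ and identifies the four components $\mathcal P_{12}^1,\mathcal P_{12}^2,\mathcal P_{23}^1,\mathcal P_{23}^2$, each closed because each is a closed non-degenerate conic of $\mathbb R^3$.

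For the transversal type in part i), I would work in a local Monge chart around a regular point of a partially umbilic curve where the coincident eigendirections span the horizontal plane; the implicit quartic equation defining the principal directions factors as (non-singular linear factor for $\mathcal L_3$)$\times$(cubic for $\mathcal L_1,\mathcal L_2$), and the cubic must be brought to the Darboux/Loewner normal form. Applying the $D_1$ criterion from Garcia \cite{garcia-tese}, which demands non-vanishing of a specific determinant built from the derivatives of the second fundamental form transverse to the curve, and plugging in the derivatives obtained from \eqref{Ifelip}–\eqref{IIfelip}, one verifies along each of the four components that the $D_1$ condition holds, so the transversal picture is the one of a Darbouxian umbilic on a surface of $\mathbb R^3$.

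For the global structure in parts i) and ii), I would use the invariance statement of Lemma \ref{lem:cartaprincipale}: the hyperplanes $\{x=0\}$ and $\{z=0\}$ are $\mathcal F_1,\mathcal F_2$-invariant, and $\{y=0\}$, $\{t=0\}$ are $\mathcal F_2,\mathcal F_3$-invariant. Thus the partially umbilic separatrix surfaces through $\mathcal P_{12}^1\cup\mathcal P_{12}^2$ lie inside the ellipsoidal section $Q_0\cap\{t=0\}\subset\mathbb R^3$, where the classical Dupin/Monge description applies and yields the cylinder $C_{12}$ bounded by the two Darbouxian ellipses; symmetrically one gets $C_{23}$ inside $Q_0\cap\{y=0\}$. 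For $\mathcal F_2$, whose singular set is the union of all four curves, the invariant hyperplanes $\{y=0\}$ and $\{t=0\}$ together carry the separatrix surfaces, and their pairwise linking across the two double sheets given by $\alpha_\pm$ produces the two Hopf bands $H_{123}^1,H_{123}^2$. Finally, the regular leaves of $\mathcal F_1,\mathcal F_2,\mathcal F_3$ outside these separatrix surfaces correspond in the principal chart to curves along which only one of $u,v,w$ varies; gluing the sixteen charts via the sign symmetries of \eqref{eq:pchart1} shows each such leaf closes up to a circle $\mathbb S^1$. The main obstacle is step two: away from the principal chart the coordinates degenerate, and the verification of the $D_1$ non-degeneracy condition along $\mathcal P_{12}\cup\mathcal P_{23}$ requires careful computation of the third-order transverse data, rather than being read off from \eqref{eq:kipelip}.
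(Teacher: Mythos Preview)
This theorem is not proved in the present paper: it is quoted from \cite{bsbm14} as a review result, with only the principal chart (Lemma~\ref{lem:cartaprincipale}), the fundamental forms \eqref{Ifelip}--\eqref{IIfelip}, and the principal curvatures \eqref{eq:kipelip} given as supporting material. So there is no in-paper proof to compare against; your outline is effectively an attempt to reconstruct the argument of \cite{bsbm14}, and its overall architecture---use \eqref{eq:kipelip} to localize the partially umbilic set to the coordinate hyperplanes, compute the conics in the $\alpha_\pm$ chart, verify the $D_1$ criterion, and read off the global structure from the hyperplane invariances---is the natural one and mirrors how the paper handles $Q_1,Q_2,Q_3$ in later sections.

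There is, however, a concrete misidentification that would derail the global part of your argument. In the $\alpha_\pm$ chart one has $z=cw$, so $w=0$ means $z=0$, not $t=0$; the ellipse of \eqref{eq:elip_hyp_a} is the set $\mathcal P_{12}\subset\{z=0\}$ (indeed, in the principal chart $k_1=k_2$ forces $u=v=-c^2$, hence $c^2+u=0$ and $z=0$ by \eqref{eq:pchart1}), while the hyperbola is $\mathcal P_{23}\subset\{y=0\}$. Accordingly the cylinder $C_{12}$ of $\mathcal F_1$-separatrices lies in $Q_0\cap\{z=0\}$, which by Lemma~\ref{lem:cartaprincipale} is $\mathcal F_1,\mathcal F_2$-invariant, not in $Q_0\cap\{t=0\}$ as you wrote (the latter is only $\mathcal F_2,\mathcal F_3$-invariant and cannot carry $\mathcal F_1$-leaves). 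Once you swap $\{t=0\}$ for $\{z=0\}$ in your description of $C_{12}$, and correspondingly sort out which invariant hyperplanes carry the $\mathcal F_2$-separatrix surfaces (the Hopf bands sit in $\{z=0\}$ and $\{y=0\}$, matching the locations of $\mathcal P_{12}$ and $\mathcal P_{23}$), the rest of your plan goes through. Your closing caveat about the $D_1$ verification is well taken: that step genuinely requires third-order transverse data not visible in \eqref{eq:kipelip} and is where the real work of \cite{bsbm14} lies.
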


\begin{figure}[h]
\begin{center}
    \def\svgwidth{0.60\textwidth}
    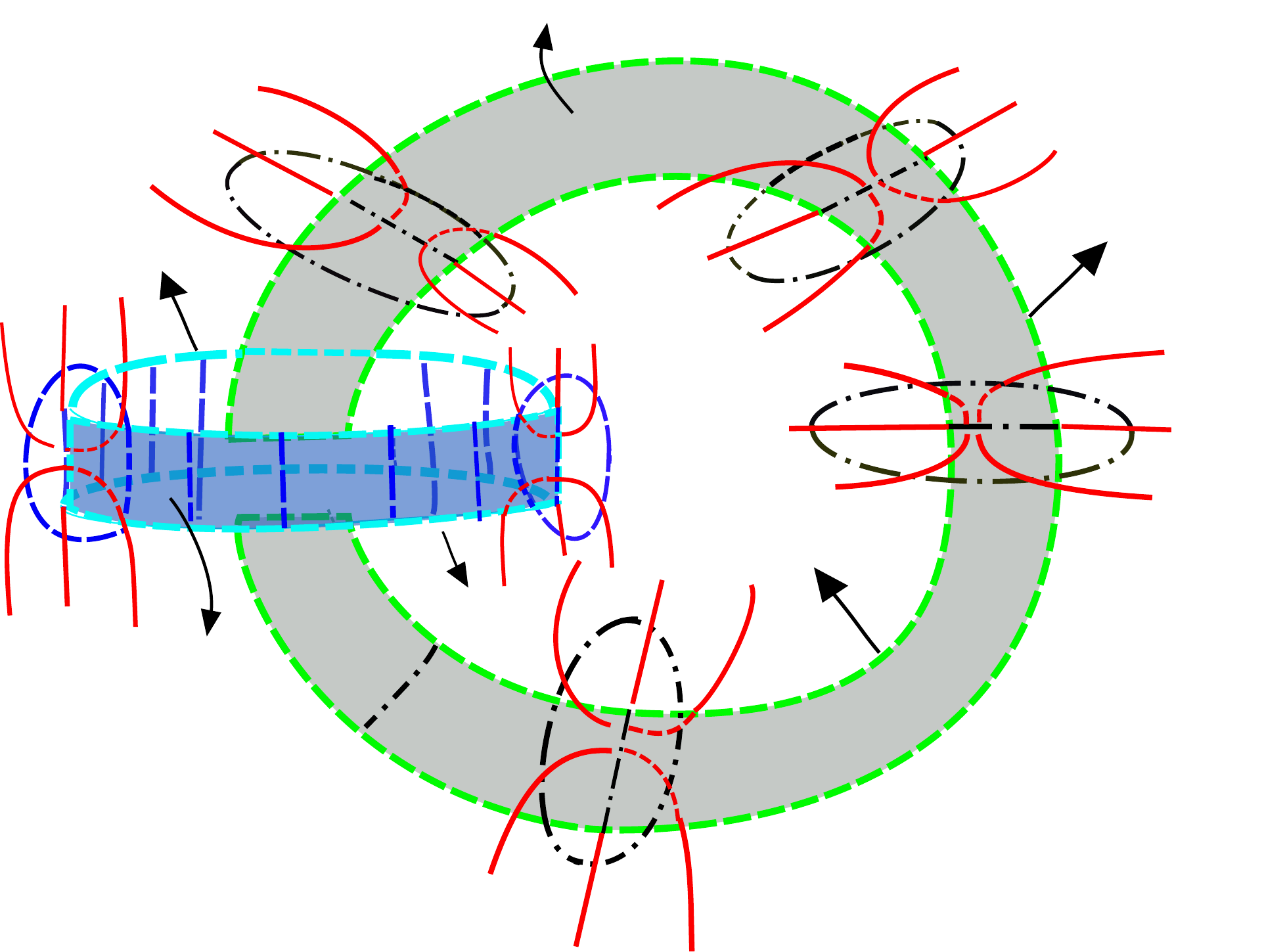
    \caption{{\small  Global behavior of the principal foliations  $\mathcal{F}_i,\; (i=1,2,3)$. The cylinder $C_{12}$ is foliated by principal lines of ${\mathcal F}_1$  and is  bounded  by two partially umbilic lines (dashed  green). The cylinder $C_{23}$ is foliated by principal lines of $\mathcal F_3$ and is  bounded  by  two partially umbilic lines (dashed light blue). }
     }
  \label{fig:pabcd}
    \end{center}
\end{figure}

\begin{figure}[h]
\begin{center}
         \def\svgwidth{0.70\textwidth}
    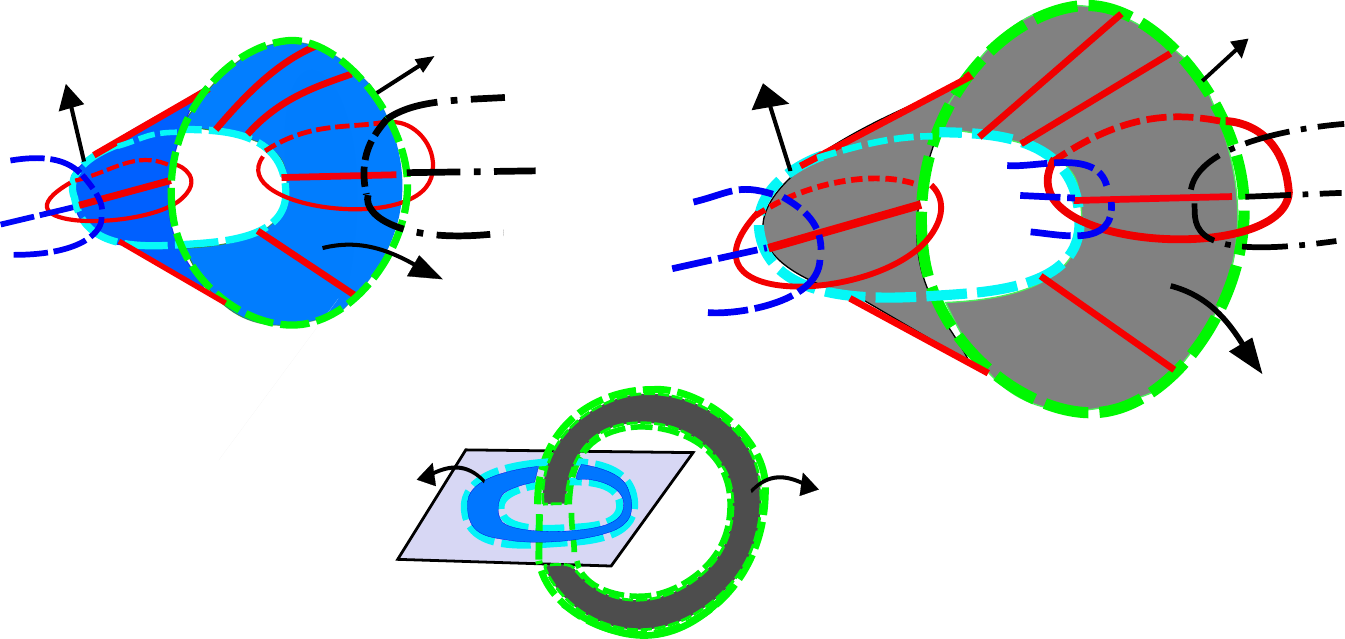
        \includegraphics[scale=0.6]{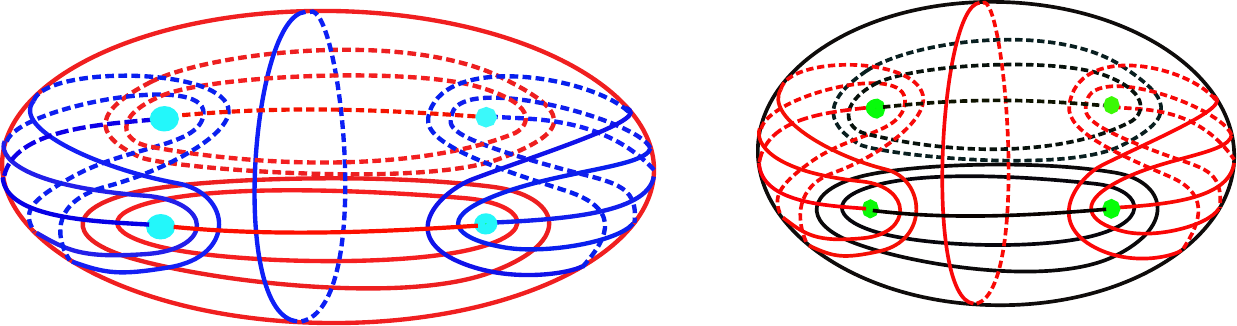}
    \caption{{\small    Hopf bands $H_{123}^1$ and  $H_{123}^2$ with  leaves of $\mathcal{F}_2 $.
    Global behavior of the Principal Foliations  $\mathcal{F}_i  $ near 
    the partially umbilic curves, top. Bottom:  quartic
     ellipsoidal  surface whose $4$ umbilics  slide along the  partially umbilic  closed lines  (horizontal, blue dotted print,
    and vertical, green dotted print).}
 }
  \label{fig:conexao}
    \end{center} 
\end{figure}
 
 \newpage
\section{
Complements on Principal Structures on  Quadrics 
in
 $\mathbb R^3$}\label{ss:qr3}

In this section a theorem of Dupin
 is revisited. 
No 
presentation 
as explicit as the one provided here has been found in the literature. See 
\cite{spivak,  struik}. 

\begin{proposition} \label{prop:qelip} The 
ellipsoid 
\begin{equation} \label{eq:q0}
q_0(x,y,z)=\frac{x^2}{a^2}+\frac{y^2}{b^2}+\frac{z^2}{c^2}-1=0, \; a>b> c>0, 
\end{equation}
is diffeomorphic to $\mathbb S^2 $ and 
 has
  eight principal charts $(u,v)=\varphi_i(x,y,z)$, $u>v$,  where
  $\varphi_i^{-1}:   (-a^2,-b^2 )     \times (-b^2,-c^2) \to 
  \{(x,y,z): xyz\ne 0\}\cap q_0^{-1}(0) $
   is defined by equations
  \eqref{eq:elip1}.

  \begin{equation}\label{eq:elip1} \aligned x^2 &= {\frac {a^2   \left( {a}^{2}+v \right)  \left( {a
  }^{2}+u \right) }{    \left( {a}^
  {2}-{c}^{2} \right)  \left( a^2-{b}^{2} \right)}},
  \;  y^2 = -{\frac {{b}^{2}   \left( {b}^{2}+v \right) 
   \left( {b}^{2}+u \right) }{   \left( {
  a}^{2}-{b}^{2} \right)  \left( {b}^{2}-{c}^{2} \right) }}
  \\
  z^2 &=-{\frac {{c}^{2}   \left( {c}^{2}+v \right) 
   \left( {c}^{2}+u \right) }{ \left( a^2- {c}^{2}\right)  \left( {b
  }^{2}-{c}^{2} \right)    }}
  \endaligned \end{equation}

  For all $p\in \{(x,y,z): xyz \ne
  0\}\cap q_0^{-1}(0)$ and $q_0=q_0^{-1}(0)$ 
  positively oriented
  by the inward normal,
  the principal curvatures satisfy
  $k_1(p)\leq   k_2(p)$ and in the chart $(u,v)$ are given by:
  %
 $ k_1(u,v)={ -\frac {abc}{u\sqrt {uv}}},\quad k_2(u,v)={-\frac {abc}{v\sqrt {uv}}}. $

     The   umbilic set is given by  \;   
     $${\mathcal U}=\{p: k_1(p)=k_2(p)\}={\small  \left( 
   \pm \sqrt {{\frac {{a}^{2} \left( {a}^{2}-{b}^{2} \right) }{{a}^{2}-{c}^{
   2}}}},0,\pm \sqrt {{\frac {{c}^{2} \left( {b}^{2}-{c}^{2} \right) }{{a}^{2
   }-{c}^{2}}}}   
   \right)}. $$
   The leaves of the principal foliations ${\mathcal F}_1$  and  ${\mathcal F}_2$ are all  closed 
   (with the exception of  the  four umbilic separatrix arcs). See figure
 \ref{fig:elip}.
   
   \begin{figure}[h]
    
   \begin{center}
    \def\svgwidth{0.70\textwidth}
     \includegraphics[scale=0.420]{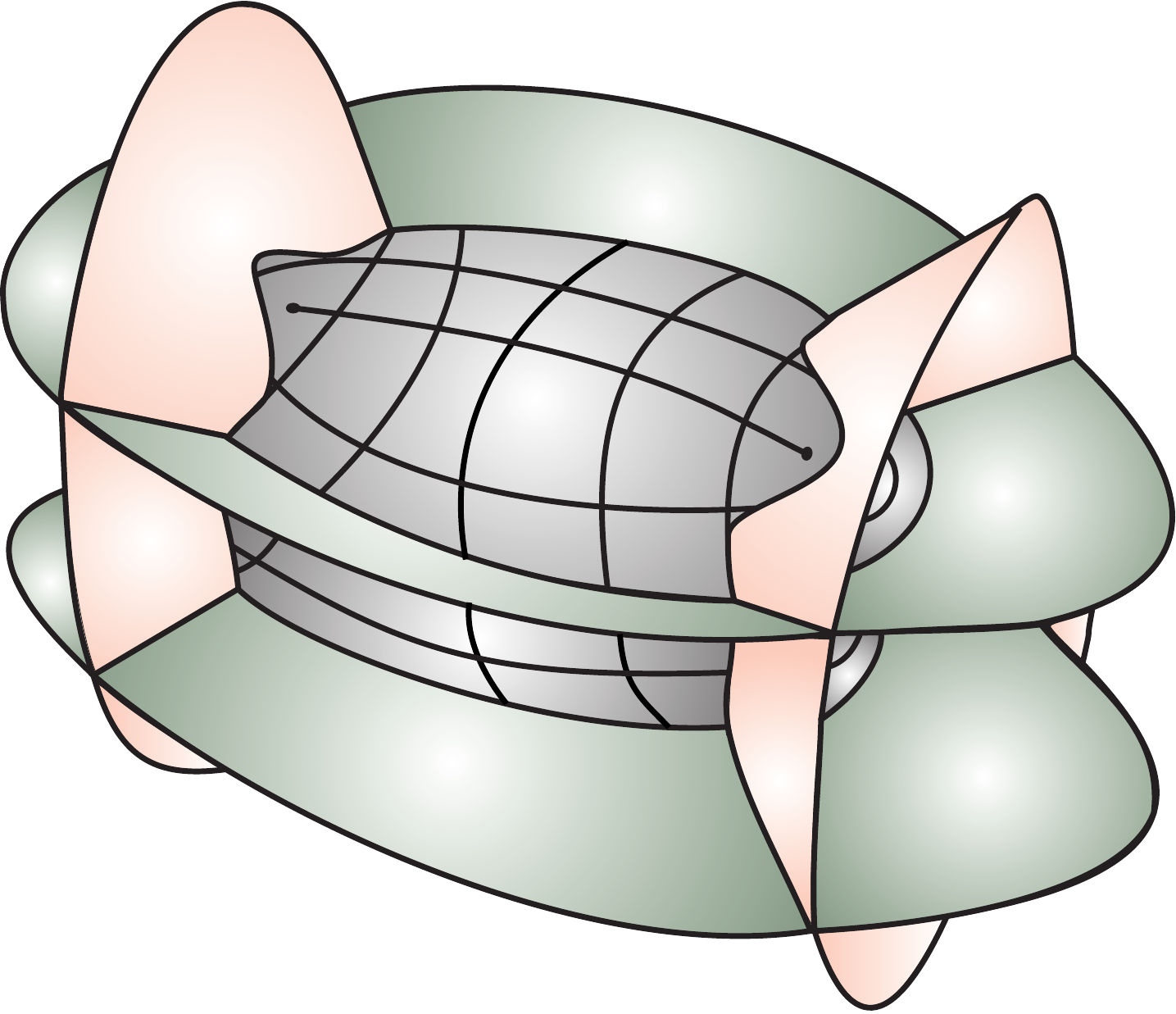}
   \caption{Illustration of  Dupin Theorem. Principal lines of  the ellipsoid  $
   Q_0$ and orthogonal family of quadrics in $\mathbb R^3$. \label{fig:elip} }
   \end{center}
   \end{figure}
\end{proposition}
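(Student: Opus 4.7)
\medskip

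\noindent \textbf{Proof plan.} The plan is to base the argument on the classical confocal family of quadrics, which furnishes the orthogonal triply-conjugate coordinate system guaranteed by Dupin's theorem. Concretely, consider the one-parameter family
\[
 q_{\lambda}(x,y,z)=\frac{x^2}{a^2+\lambda}+\frac{y^2}{b^2+\lambda}+\frac{z^2}{c^2+\lambda}-1=0, \qquad \lambda\ne -a^{2},-b^{2},-c^{2},
\]
and fix a point $p=(x_0,y_0,z_0)$ of $q_0^{-1}(0)$ with $x_0y_0z_0\ne 0$. Clearing denominators in $q_\lambda(p)=0$ produces a cubic polynomial in $\lambda$ whose roots govern which quadrics of the family pass through $p$. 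A sign inspection at $\lambda=0,-c^2,-b^2,-a^2$ shows that the three roots are distinct and lie respectively in $(-a^{2},-b^{2})$, $(-b^{2},-c^{2})$ and $(-c^{2},+\infty)$; the last root is $\lambda=0$ (the ellipsoid itself). Labelling the remaining two roots $u>v$ yields a smooth map $(x,y,z)\mapsto (u,v)$ on each open octant $\{xyz\neq 0\}\cap q_0^{-1}(0)$, producing the eight charts.

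\medskip

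\noindent Next, I would invert this map explicitly. The identity
\[
q_\lambda(p)-1 = -\frac{\lambda(\lambda-u)(\lambda-v)}{(a^2+\lambda)(b^2+\lambda)(c^2+\lambda)}
\]
holds as rational functions of $\lambda$ once both sides are multiplied by $(a^{2}+\lambda)(b^{2}+\lambda)(c^{2}+\lambda)$. Evaluating the resulting polynomial identity at $\lambda=-a^{2},-b^{2},-c^{2}$ isolates $x^{2}, y^{2}, z^{2}$ and recovers exactly the parametrization \eqref{eq:elip1}. The eight possible sign choices for $(x,y,z)$ give the eight charts $\varphi_i^{-1}$ of the statement, and together they cover $\{xyz\neq 0\}\cap q_0^{-1}(0)$.

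\medskip

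\noindent With $\varphi^{-1}(u,v)$ in hand, the remaining work is computational. Differentiating \eqref{eq:elip1} I would compute $\partial\varphi/\partial u$ and $\partial\varphi/\partial v$ and verify that $g_{uv}=\langle \partial\varphi/\partial u,\partial\varphi/\partial v\rangle$ vanishes identically; this is the analytic content of Dupin's theorem on the orthogonality of confocal quadrics, and the algebra parallels the one producing the $\mathbb{R}^{4}$ formulas \eqref{Ifelip}--\eqref{IIfelip} in Lemma \ref{lem:cartaprincipale}. Computing $b_{ij}=\langle \partial^2\varphi/\partial u_i\partial u_j,N\rangle$ with $N=-\nabla q_0/|\nabla q_0|$, one checks $b_{uv}=0$ and obtains the diagonal $b_{uu}, b_{vv}$; the ratios $k_i=b_{ii}/g_{ii}$ yield the asserted expressions
$k_1=-abc/(u\sqrt{uv})$ and $k_2=-abc/(v\sqrt{uv})$. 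Since $u<v<0$ in the chart, one has $k_1<k_2$ and both are positive, consistent with the inward orientation.

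\medskip

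\noindent Finally, the umbilic set is determined by $k_1=k_2$, i.e.\ $u=v$. Inside the open chart this is impossible, so umbilics only arise as limits in which the chart degenerates: from the domain $(-a^{2},-b^{2})\times(-b^{2},-c^{2})$ the collision must occur at $u=v=-b^{2}$, which by \eqref{eq:elip1} forces $y=0$ and gives precisely the four claimed points. For the global leaf structure, each level set $\{u=\text{const}\}$ (resp.\ $\{v=\text{const}\}$) on $q_0$ is the intersection of the ellipsoid with another confocal quadric; these are compact smooth curves diffeomorphic to $\mathbb S^{1}$, except at the umbilics where two transverse leaves meet along the four umbilic separatrices. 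The diffeomorphism $q_0\cong \mathbb S^{2}$ is standard. I expect the only delicate part to be the algebraic verification that $g_{uv}=b_{uv}=0$ and the identification of $k_1,k_2$ in the compact form above, as well as the sign bookkeeping showing that the eight octant charts piece together consistently across the coordinate planes $xyz=0$.
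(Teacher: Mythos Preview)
Your proposal is correct and follows essentially the same route as the paper: the paper's proof is deferred to Lemma~\ref{lem:q0r3conforme}, which simply writes down the confocal parametrization $\psi$, computes the diagonal first and second fundamental forms (with $h(t)=(a^2+t)(b^2+t)(c^2+t)$), reads off $k_1,k_2$, and locates the umbilics at $u=v=-b^2$. Your plan does exactly this, and is in fact more explicit than the paper about \emph{why} the confocal parameters $(u,v)$ exist and lie in the stated intervals (the cubic-root argument), a point the paper only spells out later, in the proof of Proposition~\ref{prop:q1folha}. One small slip: in your displayed identity the left-hand side should be $q_\lambda(p)$, not $q_\lambda(p)-1$, since your $q_\lambda$ already includes the ``$-1$''.
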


The proof follows from lemma \ref{lem:q0r3conforme}.

   \begin{lemma}\label{lem:q0r3conforme}
   Consider the ellipsoid   $q_0$ 
   given  in $\mathbb R^3$ by equation (\ref{eq:q0}) with $a>b> c>0$.
 Let
 {\small
 $$ s_1 = \frac 12 \int_{-b^2}^{-c^2} \sqrt{\frac{ u}{( a^2+u ) ( c^2+u )}}  du < \infty \;  
s_2=\frac 12 \int_{-a^2}^{-b^2}   \sqrt{\frac{     v }{ ( a^2+v ) ( c^2+v ) }} dv < \infty.$$
}
   There exists a parametrization
   $\varphi: [-s_1,s_1]\times  [-s_2,s_2]\to 
   q_0  \cap \{(x,y,z), y\geq 0)\}$ such that
 the principal lines are the coordinate
 curves and $\varphi$ is conformal in the interior of the rectangle.

 Moreover $ \varphi(s_1,s_2)=U_1,\;   \varphi(-s_1,s_2)=U_2,\; \varphi(-s_1,-s_2)=U_3$  and $  \varphi(s_1,-s_2)=U_4. $

By symmetry considerations the same result
 holds  for   the region
$q_0\cap \{(x,y,z), y\leq 0)\}$.

  \end{lemma}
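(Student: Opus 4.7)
The plan is to transform the principal chart $(u,v)$ of Proposition~\ref{prop:qelip} into an isothermal principal parametrization, and then to glue the four analogous charts that together cover the closed half ellipsoid $q_0\cap\{y\ge 0\}$ into a single rectangle.

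First I would compute the first fundamental form $I=g_{uu}\,du^{2}+g_{vv}\,dv^{2}$ (the off-diagonal term vanishes because the chart is principal) by differentiating the explicit expressions in \eqref{eq:elip1}. The resulting sums, typified by $\sum_{i} \alpha_{i}^{2}(\alpha_{i}^{2}+v)/\bigl[(\alpha_{i}^{2}+u)\prod_{j\ne i}(\alpha_{i}^{2}-\alpha_{j}^{2})\bigr]$ with $\alpha_{1}=a,\alpha_{2}=b,\alpha_{3}=c$, evaluate, by Lagrange interpolation of the quadratic $s(s+v)$ at the nodes $s\in\{a^{2},b^{2},c^{2}\}$, to the closed forms
\[
g_{uu}=\frac{u(u-v)}{4(a^{2}+u)(b^{2}+u)(c^{2}+u)},\qquad g_{vv}=\frac{v(v-u)}{4(a^{2}+v)(b^{2}+v)(c^{2}+v)},
\]
both positive on $(-a^{2},-b^{2})\times(-b^{2},-c^{2})$. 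The essential point is the Liouville-type separation $I=(v-u)\bigl[P(u)\,du^{2}+Q(v)\,dv^{2}\bigr]$, with $P(u),Q(v)>0$ depending each on a single coordinate.

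Next I would pass to the isothermal coordinates $\tilde u=\int\sqrt{P(u)}\,du$ and $\tilde v=\int\sqrt{Q(v)}\,dv$, so that $I=(v-u)(d\tilde u^{2}+d\tilde v^{2})$. The integrands carry only inverse-square-root (hence integrable) singularities at the endpoints of the $u$- and $v$-intervals, so each integral converges and delivers the finite edge-lengths identified with $s_{1}$ and $s_{2}$ of the statement (up to a renaming of the variables of integration). The coordinate curves of $(\tilde u,\tilde v)$ remain the principal lines of $q_{0}$ since the change of variable acts separately in each factor.

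To assemble the full rectangle I would glue the four principal charts covering $q_{0}\cap\{y\ge 0\}$, one in each open region distinguished by the signs of $x$ and $z$. These regions meet along the principal arcs $\{x=0,\ y>0\}$ and $\{z=0,\ y>0\}$, corresponding to the degenerate edges $u=-a^{2}$ and $v=-c^{2}$ of the coordinate rectangle. Reflecting $\tilde u\mapsto -\tilde u$ across $\{x=0\}$ and $\tilde v\mapsto -\tilde v$ across $\{z=0\}$ stitches the four sub-rectangles into $[-s_{1},s_{1}]\times[-s_{2},s_{2}]$, whose four corners are then the simultaneous limits $(u,v)\to(-b^{2},-b^{2})$, i.e.\ the four umbilic points on the equator $\{y=0\}$ listed in Proposition~\ref{prop:qelip}. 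Conformality on the interior is preserved, since each reflection is an isometry of the flat metric $d\tilde u^{2}+d\tilde v^{2}$.

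The main obstacle, beyond the sign bookkeeping and the convergence argument for the $s_{i}$, is verifying that the gluing along the interior principal arcs $\{x=0\}$ and $\{z=0\}$ is $C^{\infty}$ (indeed analytic), not merely continuous; this is a consequence of the fact that those arcs lie inside the principal regular part of $q_{0}$, where the principal line fields are analytic. The image exhausts $q_{0}\cap\{y\ge 0\}$ because the four quadrant charts together do, and the symmetric statement for $q_{0}\cap\{y\le 0\}$ follows at once from the reflection symmetry of $q_{0}$ in $y$.
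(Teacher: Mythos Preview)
Your proposal is correct and follows essentially the same route as the paper: compute the first fundamental form in the principal chart of Proposition~\ref{prop:qelip}, exploit the Liouville separation $I=(v-u)\bigl[P(u)\,du^{2}+Q(v)\,dv^{2}\bigr]$ to pass to isothermal principal coordinates via the one-variable integrals, and then extend over the closed half ellipsoid by the coordinate-plane reflections. The paper's only addition is an explicit closed formula for the resulting parametrization $\varphi$ in terms of elementary functions, and it is slightly less detailed than you are about the integrable endpoint singularities and the analyticity of the gluing along $\{x=0\}$ and $\{z=0\}$.
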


  \begin{proof} The ellipsoid
 $q_0
 $
  has a principal chart   $(u,v)$ defined by the parametrization
  $\psi:
   [-b^2,-c^2]\times[-a^2,-b^2]\to \{(x,y,z): x>0,\;y>0,\; z>0\}$ given by:
   {\small  
  $$\psi(u,v)= \left(a\sqrt{\frac{(a^2+v)(a^2+u)}{(a^2-c^2)(a^2-b^2)}},  b\sqrt{\frac{(b^2+v)(b^2+u)}{(a^2-b^2)(c^2-b^2)}},c \sqrt{\frac{(c^2+v)(c^2+u)}{(b^2-c^2) (a^2-c^2)}}\right
  ).$$
  }
  The fundamental forms
   in this chart  are given by:
  $$\aligned I=&\; Edu^2+Gdv^2=\frac 14\frac{u(u-v)}{h(u)}du^2-\frac 14\frac{v(u-v)}{h(v)}dv^2,\\
  II=& \;edu^2+gdv^2= \frac{abc(u-v)}{4\sqrt{uv} h(u)}du^2-\frac{abc(u-v)}{4\sqrt{uv} h(v)}du^2\\
  h(t)=&( a^2+t )( b^2+t )( c^2+t ).
  \endaligned $$
  The principal curvatures are given by  $k_2(u,v)=-\frac{abc}{v\sqrt{uv}},\; k_1(u,v) = -\frac{abc}{u\sqrt{uv}}.$
   Therefore, $k_1(u,v)=k_2(u,v)$ if and only if $u=v=-b^2$.

Considering the change of coordinates defined by
$ds_1=\sqrt{\frac{u}{2h(u)}}du$, $ds_2=\sqrt{\frac{v}{2h(v)}}dv$, 
written 
 $u=A(s_1)$ and $v=B(s_2)$,
 obtain a conformal
parametrization $\varphi:[0,s_1]\times [0,s_2]\to   \{(x,y,z):
x>0,\;y>0,\; z>0\}$
 in which
 the coordinate curves are principal
lines and the fundamental forms
 are given by $I=(A(s_1)-B(s_2))(ds_1^2+ds_2^2)$ and $II=(A(s_1)-B(s_2)) ( k_1ds_1^2+k_2ds_2^2)$.  
 
 From  the symmetry of the ellipsoid $ q_0 $
 with respect to coordinate plane reflections,  consider an analytic continuation  of $\varphi$
 from  the rectangle $R=[-s_1,s_1]\times [-s_2,s_2]$ 
 to obtain a conformal chart $(U,V)$  of  $R$
covering  the region $
q_0
\cap \{y\geq 0\}$.

 By construction $\varphi(\partial R)$ is the ellipse in
 the plane $xz$ and  the four vertices of the rectangle $[-s_1,s_1]\times [-s_2,s_2]$
 are mapped by $\varphi$  to the four umbilic points 
 given by
 $\left(\pm a\sqrt{\frac{a^2-b^2}{a^2-c^2}}, 0, \pm c \sqrt{\frac{b^2-c^2}{a^2-c^2}}\right)$.
 
 An explicit parametrization  $\varphi:[-s_1,s_1]\times [-s_2,s_2] \to \mathbb R^3$ is given by
 {\small  
 $$\aligned \varphi(u,v)=&\left(  a \cos U \sqrt {{ A_1}\,   \cos^2
  V  +   \sin^2V} ,b\sin U \sin V ,c\cos  V \sqrt {  B_1    \cos^2 U
 +  \sin^2 U}
 \right),\\
 A_1=&\frac{a^2-b^2}{a^2-c^2},\;\;B_1=\frac{b^2-c^2}{a^2-c^2} \\
  U=& -s_1+\frac{2s_1}{\pi} u, V=-s_2+\frac{2s_2}{\pi} v, \; u\in [0, \pi], v\in [0,\pi].
 \endaligned
 $$
 }

 See figure  \ref{fig:cq}, left.
 
  \end{proof}

\begin{proposition} \label{prop:q1folha} The quadric $q_1(x,y,z)=\frac{x^2}{a^2}+\frac{y^2}{b^2}-\frac{z^2}{c^2}=1, \; a>b>0, \; c>0, $
 is diffeomorphic to $\mathbb S^1\times \mathbb R$ and 
 has eight principal charts $(u,v)=\varphi_i(x,y,z)$, $u>v$,  where
  $$\varphi_i^{-1}:   (c^2,\infty)     \times (-a^2,-b^2) \to 
  \{(x,y,z): xyz\ne 0\}\cap q_1^{-1}(0) $$
   is defined by equation
  \eqref{eq:q1folha}.

  \begin{equation}\label{eq:q1folha} \aligned x^2 &= {\frac {a^2   \left( {a}^{2}+v \right)  \left( {a
  }^{2}+u \right) }{    \left( {a}^
  {2}+{c}^{2} \right)  \left( a^2-{b}^{2} \right), }}
  \quad y^2 = -{\frac {{b}^{2}   \left( {b}^{2}+v \right) 
   \left( {b}^{2}+u \right) }{   \left( {
  a}^{2}-{b}^{2} \right)  \left( {b}^{2}-{c}^{2} \right) }}
  \\
  z^2 &=-{\frac {{c}^{2}   \left( {c}^{2}-v \right) 
   \left( {c}^{2}-u \right) }{ \left( a^2+ {c}^{2}\right)  \left( {b
  }^{2}+{c}^{2} \right)    }}
  \endaligned \end{equation}

  For all $p\in \{(x,y,z): xyz \ne
  0\}\cap q_1^{-1}(0)$ and $q_1=q_1^{-1}(0)$ positively oriented outward, by  $-\nabla q_1$, 
  the principal curvatures satisfy
  $k_1(p)< 0<  k_2(p)$ and in the chart $(u,v)$ are given by:
  
  \begin{equation}
  k_1(u,v)={ -\frac {abc}{u\sqrt {-uv}}},\quad k_2(u,v)={\frac {abc}{v\sqrt {-uv}}},\ 
  \end{equation}
  
   The   umbilic set   is empty: 
   ${\mathcal U}=\{p: k_1(p)=k_2(p)\}=\emptyset$.
   
   The leaves of the principal foliation ${\mathcal F}_1$ are all  closed   curves  and
   those  of  ${\mathcal F}_2$ are all  open  arcs.
   
    There exists a parametrization
      $\varphi:  \mathbb R\times [-s_2,s_2]\times  \to Q_1\cap\{(x,y,z):  x>0\}$  such that
    the principal lines are the coordinate
    curves and $\varphi$ is conformal in the interior of the domain.

    \begin{figure}[ht]
      \begin{center}
       \def\svgwidth{0.60\textwidth}
       \includegraphics[scale=0.300]{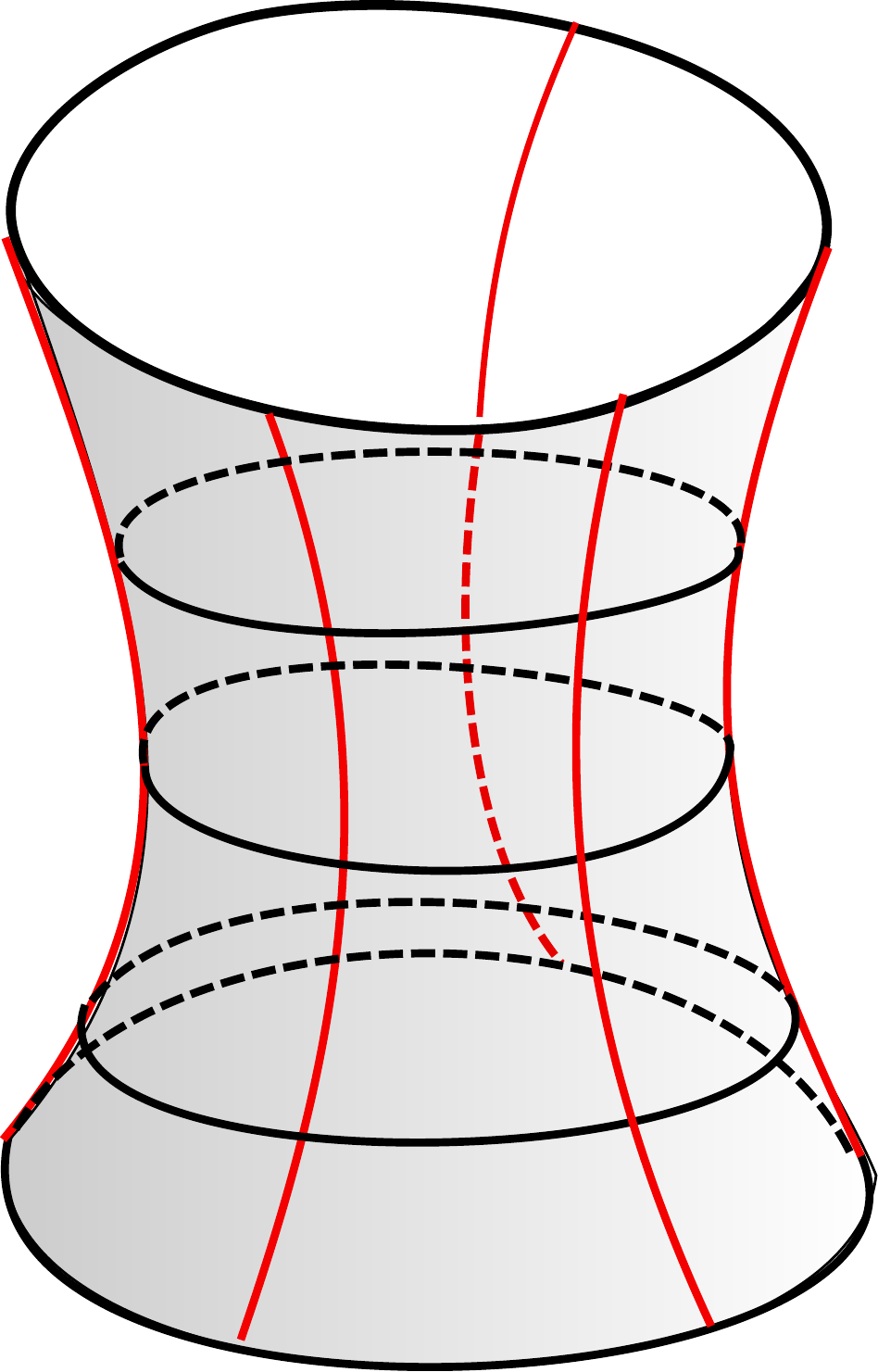}
      \caption{Principal lines of   the quadric $q_1$:\; Hyperboloid with one sheet  in $\mathbb R^3$. \label{fig:q1folha} }
      \end{center}
      \end{figure}
\end{proposition}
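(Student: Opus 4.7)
The strategy parallels Lemma~\ref{lem:q0r3conforme}, with the non-compactness of $q_1$ producing an unbounded factor in the parameter domain. First I would check that \eqref{eq:q1folha} defines a bijection onto each octant of $q_1^{-1}(0)\cap\{xyz\ne 0\}$. Substituting the three squared coordinates into $q_1$ reduces, after clearing denominators, to a partial-fraction identity in $(u,v)$ that is verified by evaluation at the three roots $\lambda=-a^2,\,-b^2,\,c^2$ of the common denominator. The associated confocal family
\[
\frac{x^2}{a^2+\lambda}+\frac{y^2}{b^2+\lambda}-\frac{z^2}{c^2-\lambda}=1
\]
is triply orthogonal at each generic point of $\mathbb R^3$, so by Dupin's theorem $(u,v)$ is a principal chart on $q_1$.

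Differentiating \eqref{eq:q1folha} and computing the outward unit normal $-\nabla q_1/|\nabla q_1|$ produces the diagonal fundamental forms
\[
I=\tfrac14\,\tfrac{u(u-v)}{\tilde h(u)}\,du^2-\tfrac14\,\tfrac{v(u-v)}{\tilde h(v)}\,dv^2,
\]
with $\tilde h(t)=(a^2+t)(b^2+t)(c^2-t)$, together with an analogous expression for $II$ carrying the factor $abc/\sqrt{-uv}$; the principal curvatures $k_i=b_{ii}/g_{ii}$ then match those stated. Since $u>c^2>0$ and $v<-b^2<0$ throughout the chart, the two curvatures have strictly opposite signs and $\mathcal U=\varnothing$.

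For the conformal parametrization and the global topology, set
\[
s_0:=\tfrac12\int_{-a^2}^{-b^2}\sqrt{\tfrac{-v}{\tilde h(v)}}\,dv,
\]
which is finite, and introduce new coordinates $(s_1,s_2)$ via $ds_1=\tfrac12\sqrt{u/\tilde h(u)}\,du$ on $(c^2,\infty)$ and $ds_2=\tfrac12\sqrt{-v/\tilde h(v)}\,dv$ on $(-a^2,-b^2)$. Writing $u=A(s_1)$, $v=B(s_2)$, the first fundamental form becomes $I=(A(s_1)-B(s_2))(ds_1^2+ds_2^2)$. The $s_1$-integral diverges because its integrand decays as $1/t$ at infinity, so $s_1$ ranges over all of $\mathbb R$, while $s_2\in[-s_0,s_0]$ after reflection (the constant $s_0$ being what is denoted $s_2$ in the statement). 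Using the symmetries of $q_1$ under sign changes of $y$ and $z$, $\varphi$ extends analytically from the octant $\{x,y,z>0\}$ to the whole half $\{x>0\}$, exactly as in the ellipsoid proof. The $s_2$-coordinate curves (leaves of $\mathcal F_1$) close up after crossing the planes $y=0$, $z=0$, while the $s_1$-coordinate curves (leaves of $\mathcal F_2$) escape to infinity along the asymptotic cone; the diffeomorphism $q_1\cong\mathbb S^1\times\mathbb R$ follows at once.

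The main technical point, and the only genuine difference from Lemma~\ref{lem:q0r3conforme}, is the divergence of the $s_1$-integral at $u=\infty$: this is what produces the $\mathbb R$-factor in the parameter domain and the open-arc character of the $\mathcal F_2$-leaves. Everything else is a sign-adjusted transcription of the ellipsoid argument.
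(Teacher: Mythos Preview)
Your approach is essentially identical to the paper's: both derive the chart \eqref{eq:q1folha} from the confocal triply orthogonal family, compute the diagonal fundamental forms with respect to the normal $-\nabla q_1$, read off the curvatures, and then pass to conformal principal coordinates by the integrals in $u$ and $v$, extending by the reflections $y\mapsto -y$ and $z\mapsto -z$ exactly as in Lemma~\ref{lem:q0r3conforme}. The one point you make more explicit than the paper---that the $u$-integral diverges like $\int du/u$ at infinity, which is what forces the $\mathbb R$-factor in the conformal domain---is a helpful addition.

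Two small slips to fix before this is usable: with $\tilde h(t)=(a^2+t)(b^2+t)(c^2-t)$ one has $\tilde h(u)<0$ for $u>c^2$ and $\tilde h(v)<0$ for $v\in(-a^2,-b^2)$, so your displayed $I$ is negative definite and your integrands $\sqrt{u/\tilde h(u)}$, $\sqrt{-v/\tilde h(v)}$ are imaginary; the correct expressions carry an extra minus sign (compare the paper's $E=-\tfrac{u(u-v)}{4h(u)}$ and $d\tau_1=\sqrt{-u/(2h(u))}\,du$). Also, the divergence of the $u$-integral at infinity by itself only yields $s_1\in[0,\infty)$; it is the reflection across $z=0$ (i.e.\ across $u=c^2$) that produces the full line $s_1\in\mathbb R$.
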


\begin{proof} The parametrizations stated are obtained showing that the quadric $q_1$ belongs to a triple orthogonal systems of surfaces defined by

$$q(\lambda,x,y,z)=\frac{x^2}{a^2+\lambda}+\frac{y^2}{b^2+\lambda}-\frac{z^2}{c^2-\lambda}-1=0.$$

Solving the system $q(u,x,y,z)=q(v,x,y,z)=q(0,x,y,z)=0$ in the variables $x^2, \; y^2$ and $z^2$ the equation \eqref{eq:q1folha} is obtained.

From the parametrization $\varphi(u,v) =(x(u,v), y(u,v),z(u,v)) $  defined by equation \eqref{eq:q1folha} and taking values in the positive orthant and considering the   positive unitary normal proportional to 
 $N=(-\frac{x}{a^2},-\frac{y}{b^2},\frac{z}{c^2})=-\nabla q_1$ it follows that the first and second fundamental forms are given by:

$$\aligned I=& Edu^2+Gdv^2= \frac 14 (u-v)\left[\,- \frac{  u}{h(u)}du^2+ \frac{ v}{ h(v)}\right]
\\
II=& edu^2+gdv^2= \frac 14\,{\frac {abc \left( u-v \right)  }{\sqrt {-uv}}}\left[ \frac{du^2}{h(u)}-\frac{dv^2}{h(v)}\right]
\\
h(x)=& \left( {c}^{2}-x \right) 
 \left( {b}^{2}+x \right)  \left( {a}^{2}+x \right) ,\;   c^2 <u, \;  -a^2<v<-b^2<0.
\endaligned $$
Therefore, $k_1=-{\frac {abc}{u\sqrt {-uv} }}<0$ and $0<k_2=-{\frac {abc}{v\sqrt {-uv} }}.$

Let $d\tau_1= \sqrt{-\frac{u}{2h(u)}}du $ and $d\tau_2=\sqrt{\frac{v}{2h(v)}}dv $.  Defining the change of coordinates
$\phi(u,v)=(U,V)$,
  $ U =\int_{c^2}^u  d\tau_1,$
$ V =\int_{- a^2}^v d\tau_2$
   it follows $\beta:[0,\infty)\times [0,s_2]\to q_1$ defined by
 $\beta(U, V ) =\varphi\circ \phi^{-1}(U,V)  $ is a conformal parametrization of the hyperboloid $Q_1$  in the region $\{(x,y,z): x>0, y>0, z>0\}.$
 By symmetry $\beta$ can be extended to the domain $(-\infty,\infty)\times [-s_2,s_2]$, where $s_2=\int_{-a^2}^{-b^2}d\tau_2,$ and taking values in the $Q_1\cap \{(x,y,z): x>0\}.$
\end{proof}

\begin{proposition} \label{prop:q2folhas}The quadric $q_2(x,y,z)=\frac{x^2}{a^2}-\frac{y^2}{b^2}-\frac{z^2}{c^2}=1, \; a>0,\; b>c>0, $   has two connected components, each one diffeomorphic to $ \mathbb R^2$ and 
 has
  eight principal charts $(u,v)=\varphi_i(x,y,z,)$, $u>v$,  where
  $$\varphi_i^{-1}:   (b^2,\infty)     \times (c^2,b^2) \to 
  \{(x,y,z): xyz\ne 0\}\cap q_2^{-1}(0) $$
   is defined by equation
  \eqref{eq:q2folhas}.

  \begin{equation}\label{eq:q2folhas} \aligned x^2 &= {\frac {a^2   \left( {a}^{2}+v \right)  \left( {a
  }^{2}+u \right) }{    \left( {a}^
  {2}+{c}^{2} \right)  \left( a^2+{b}^{2} \right) }}
  \quad y^2 =  {\frac {{b}^{2}   \left( {b}^{2}-v \right) 
   \left( {b}^{2}-u \right) }{   \left( {
  a}^{2}+{b}^{2} \right)  \left( {b}^{2}-{c}^{2} \right) }}
  \\
  z^2 &={\frac {{c}^{2}   \left( {c}^{2}-v \right) 
   \left( {c}^{2}-u \right) }{ \left( a^2+ {c}^{2}\right)  \left( {b
  }^{2}-{c}^{2} \right)    }}
  \endaligned \end{equation}

  For all $p\in \{(x,y,z): xyz \ne
  0\}\cap q_2^{-1}(0)$ and $q_2=q_2^{-1}(0)$ 
  positively oriented
 by the inward normal,
 the principal curvatures satisfy
  $k_1(p)\leq  k_2(p)$ and in the chart $(u,v)$ are given by:
  
  \begin{equation}
  k_1(u,v)={\frac {abc}{u\sqrt {uv}}},\quad k_2(u,v)={\frac {abc}{v\sqrt {uv}}},\ 
  \end{equation}
  
   The   umbilic set\,   ${\mathcal U} 
   = \{  \left( \pm\sqrt {{\frac {{a}^{2} \left( {a}^{2}+{b}^{2} \right) }{{c}^{2}+{a}^{
   2}}}},0,\pm\sqrt {{\frac {{c}^{2} \left( {b}^{2}-{c}^{2} \right) }{{c}^{2
   }+{a}^{2}}}}\right)
   \}$.
   
   The leaves of the principal foliation ${\mathcal F}_1$,  
    with the exception of the two umbilic separatrix arcs, 
   are all  closed 
   curves.    All the leaves of  ${\mathcal F}_2$ are   open 
  arcs and it has four umbilic separatrix arcs.
  
   There exists a parametrization
        $\varphi:  \mathbb R\times [-s_2,s_2]\times  \to q_2\cap\{(x,y,z):  x>0, z>0\}$  such that
      the principal lines are the coordinate
      curves and $\varphi$ is conformal in the interior of the domain.
      
   \begin{figure}[ht]
   \begin{center}
    \def\svgwidth{0.90\textwidth}
    \includegraphics[scale=0.400]{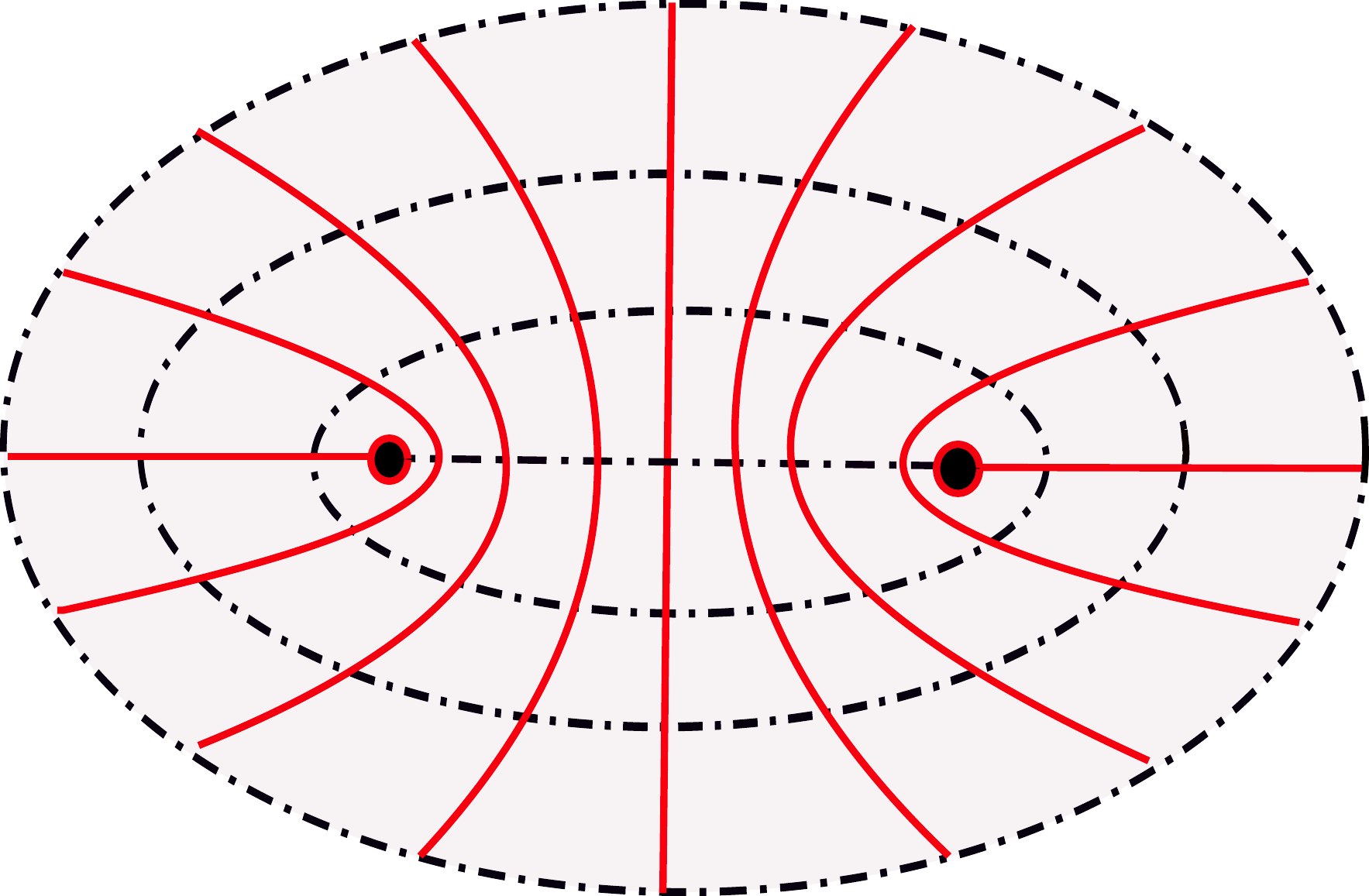}
   \caption{Principal lines on  the quadric $q_2$:\; Hyperboloid with two  sheets  in $\mathbb R^3$. Illustration  in a 
   sheet represented in an open disk. 
    \label{fig:q2folhas} }
   \end{center}
   \end{figure}
\end{proposition}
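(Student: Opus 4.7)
The scheme of proof mirrors that of Proposition \ref{prop:q1folha}. Embed $q_2$ in the confocal (Dupin) family
\[
q(\lambda, x, y, z) \;=\; \frac{x^2}{a^2+\lambda} - \frac{y^2}{b^2-\lambda} - \frac{z^2}{c^2-\lambda} \;-\; 1 \;=\; 0,
\]
for which $q_2 = q(0,\cdot)$. For a point $p=(x_0,y_0,z_0)$ on $q_2$ with $x_0 y_0 z_0\neq 0$, the equation $q(\lambda, p)=0$ becomes, after clearing denominators, a cubic in $\lambda$ with leading coefficient $-1$. A sign count at $\lambda=-a^2, c^2, b^2, +\infty$ shows that apart from the root $\lambda=0$ there is one root $v\in (c^2,b^2)$ (corresponding to a confocal one-sheet hyperboloid) and one root $u\in (b^2,\infty)$ (corresponding to a confocal ellipsoid). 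Evaluating the factorization $-\lambda(\lambda-u)(\lambda-v)$ at $\lambda=-a^2,\ b^2,\ c^2$ recovers $x^2, y^2, z^2$, producing equations \eqref{eq:q2folhas} together with the eight principal charts obtained by the sign choices of $x, y, z$.

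By Dupin's theorem the coordinate curves $u=\text{const}$ and $v=\text{const}$ are principal. A direct calculation, writing $h(t)=(a^2+t)(b^2-t)(c^2-t)$, yields
\[
I=\tfrac{1}{4}(u-v)\!\left[\tfrac{u}{h(u)}\,du^2 - \tfrac{v}{h(v)}\,dv^2\right],\qquad II=\tfrac{abc(u-v)}{4\sqrt{uv}}\!\left[\tfrac{du^2}{h(u)} - \tfrac{dv^2}{h(v)}\right].
\]
The ranges $u\in(b^2,\infty),\ v\in(c^2,b^2)$ give $h(u)>0$, $h(v)<0$ and $u>v>0$, so the diagonal entries of $I$ are positive; the stated expressions $k_1=abc/(u\sqrt{uv})$, $k_2=abc/(v\sqrt{uv})$ with $0<k_1<k_2$ follow from $k_i=b_{ii}/g_{ii}$. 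The umbilic condition $k_1=k_2$ forces $u=v$, impossible inside the chart; passing to its closure forces $u=v=b^2$, and substituting into \eqref{eq:q2folhas} gives $y=0$ together with precisely the four points listed in $\mathcal U$.

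The conformal parametrization is produced by the change of variables $d\tau_1=\sqrt{u/(2h(u))}\,du$, $d\tau_2=\sqrt{-v/(2h(v))}\,dv$, both integrands being real and positive in view of the sign analysis above. The $\tau_1$-integral has an integrable endpoint singularity at $u=b^2$ but diverges logarithmically as $u\to\infty$, so $\tau_1\in[0,\infty)$; the $\tau_2$-integral converges at both endpoints, giving $\tau_2\in[0,s_2]$ with $s_2<\infty$. In these coordinates $I=(u-v)(d\tau_1^2+d\tau_2^2)$ is conformally flat. Analytic continuation using the reflection symmetries $y\mapsto -y$ and $z\mapsto -z$ of $q_2$, both of which preserve the principal foliations, extends the parametrization to the strip $(\tau_1,\tau_2)\in\mathbb R\times[-s_2,s_2]$ covering the region asserted.

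The global topology of the foliations is then transparent from the Dupin construction: a leaf of $\mathcal F_1$ ($u=$ const) is $q_2\cap\{q(u,\cdot)=0\}$, the intersection of $q_2$ with a confocal ellipsoid, which is compact and therefore closes up on each sheet; a leaf of $\mathcal F_2$ ($v=$ const) is the intersection with a confocal one-sheet hyperboloid, non-compact and hence an open arc. The Darbouxian $D_1$ structure at each of the two umbilics on a given sheet then accounts for the two $\mathcal F_1$-separatrix arcs and the four $\mathcal F_2$-separatrix arcs in the statement. The most delicate step, I expect, will be certifying that the $\mathcal F_1$-separatrix arcs on each sheet actually join the two umbilics of that sheet (rather than escaping to infinity); this is verified by inspection of their preimages in the conformal strip, where the umbilics correspond to the two endpoints $\tau_1=0,\ \tau_2=\pm s_2$ of an explicit horizontal geodesic segment.
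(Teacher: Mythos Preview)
Your approach is exactly the paper's: its proof reads in full ``Similar to the proof of Proposition~\ref{prop:q1folha}'', and you have carried out precisely that adaptation---confocal family, solving for $(x^2,y^2,z^2)$, diagonal fundamental forms via $h(t)=(a^2+t)(b^2-t)(c^2-t)$, the conformal reparametrization by $d\tau_i$, and extension by the coordinate reflections. Your added discussion of the global foliation structure (compact versus non-compact confocal slices, Darbouxian $D_1$ pattern) goes beyond what the paper writes but is correct and welcome.

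One small slip: to cover the asserted target region $q_2\cap\{x>0,\ z>0\}$ you should not invoke the reflection $z\mapsto -z$, which leaves that region. The relevant continuation is through the two boundary pieces $u=b^2$ and $v=b^2$, \emph{both} of which correspond to $y=0$; reflecting $y\mapsto -y$ across each of them is what produces the full strip $\mathbb R\times[-s_2,s_2]$. Correspondingly, the region $\{x>0,\ z>0\}$ contains only \emph{one} umbilic point, so your closing remark about the $\mathcal F_1$-separatrix ``joining the two umbilics of that sheet'' belongs to the full sheet $\{x>0\}$ rather than to the half-sheet parametrized here; the picture in the strip is that the single umbilic sits at the corner $\tau_1=0,\ \tau_2=s_2$ (cf.\ Fig.~\ref{fig:cq}, right).
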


 \begin{proof} Similar to the proof  of  Proposition     \ref{prop:q1folha}.\end{proof}

   \begin{figure}[ht]
   \begin{center}
    \def\svgwidth{1.00\textwidth}
       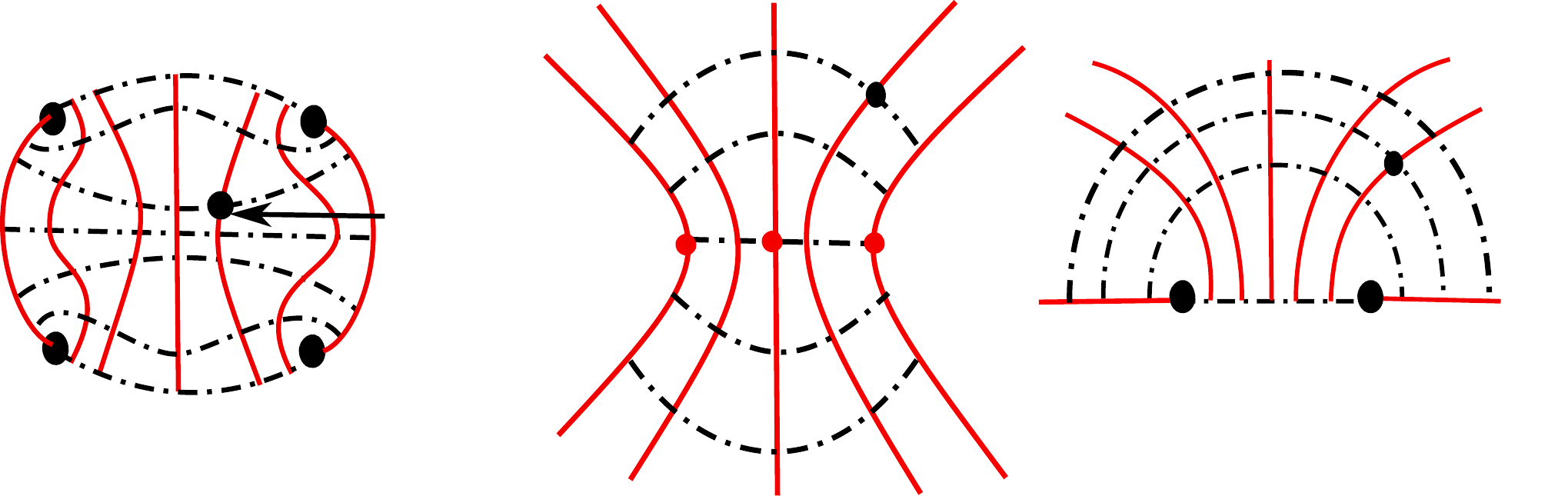
   \caption{ Principal coordinates on quadrics in $\mathbb R^3$. Ellipsoid: left; Hyperboloid with one sheet (center) and with two sheets (right).
    \label{fig:cq} }
   \end{center}
   \end{figure}

 \section{
 Ellipsoidal Hyperboloid with one sheet in $\mathbb R^4$
  } \label{ss:q1abcd}

  \vskip .3cm
  
  \noindent In this section will be established  the global behavior
  of principal lines in the quadric of signature 1 $(++ + - ) $. 
 
 \begin{proposition}\label{prop:AQ1R4}
 The quadric  
 
 \begin{equation} \label{eq:Q1}
 Q_1(x,y,z,t)=\dfrac{x^2}{a^2 } +\dfrac{ y^2}{b^2 } +\dfrac{
 z^2}{c^2 } - \dfrac{t^2}{d^2 } - 1=0, \;\;\; a>b>c>0, \; d>0,
 \end{equation}
 is diffeomorphic to ${\mathbb S}^2\times \mathbb R$ and has
 sixteen  principal charts $(u,v,w)=\varphi_i(x,y,z,t)$, $u>  0>v>w$,  where
 $$\varphi_i^{-1}:   (d^2,\infty)   \times (-b^2,-c^2)   \times (-a^2,-b^2) \to 
 \{(x,y,z,t): xyzt\ne 0\}\cap Q_1^{-1}(0) $$
  is defined by equation
 \eqref{eq:AQ1R4}.

 \begin{equation}\label{eq:AQ1R4} \aligned x^2 &= {\frac {a^2 \left( {a}^{2}+w \right)  \left( {a}^{2}+v \right)  \left( {a
 }^{2}+u \right) }{ \left( {a}^{2}+{d}^{2} \right)  \left( {a}^
 {2}+{c}^{2} \right)  \left( a^2-{b}^{2} \right) }}
 \; y^2 = -{\frac {{b}^{2} \left( {b}^{2}+w \right)  \left( {b}^{2}+v \right) 
  \left( {b}^{2}+u \right) }{ \left( {b}^{2}+{d}^{2} \right)  \left( {
 a}^{2}-{b}^{2} \right)  \left( {b}^{2}-{c}^{2} \right) }}
 \\
 z^2 &={\frac {{c}^{2} \left( {c}^{2}+w \right)  \left( {c}^{2}+v \right) 
  \left( {c}^{2}+u \right) }{ \left( a^2- {c}^{2}\right)  \left( {b
 }^{2}-{c}^{2} \right)  \left(d^2 +{c}^{2}\right) }}
 ,\;
  t^2 =-{\frac {{d}^{2} \left( {d}^{2}-w \right)  \left( {d}^{2}-v \right) 
    \left( {d}^{2}-u \right) }{ \left( {d}^{2}+{a}^{2} \right)  \left( {d
   }^{2}+{b}^{2} \right)  \left(c^2 -{d}^{2} \right) }}  
 \endaligned \end{equation}

 For all $p\in \{(x,y,z,t): xyzt \ne
 0\}\cap Q_1^{-1}(0)$ and $Q_1=Q_1^{-1}(0)$ 
 positively oriented
 by the 
  inward
 normal,   directed by $-\nabla Q_1$,
  the principal curvatures satisfy
 $k_1(p) <0<  k_2(p) \leq k_3(p)$ and in the chart $(u,v,w)$ are given by:
 
 \begin{equation}\label{eq:AQ1p} k_1(u,v,w)={-\frac {abcd}{u\sqrt {uvw}}},\;\;
  k_2(u,v,w)={-\frac {abcd}{w\sqrt {uvw}}},\;\;
  k_3(u,v,w)={-\frac {abcd}{v\sqrt {uvw}}}
 \end{equation}
 
  The partially umbilic set ${\mathcal P}_{12}=  \emptyset$  and ${\mathcal P}_{23}=\{p: k_2(p)=k_3(p)\}$ is the union of   four open  arcs contained in the hyperplane $y=0$  
  The leaves of the principal foliation ${\mathcal F}_1$ are all 
  unbounded open arcs.
  
 \end{proposition}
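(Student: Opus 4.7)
The plan is to follow the pattern established for the $\mathbb R^3$ hyperboloid (Proposition \ref{prop:q1folha}) and the compact ellipsoid $Q_0$ (Lemma \ref{lem:cartaprincipale}), exhibiting $Q_1$ as one member of a triply orthogonal family of confocal quadrics in $\mathbb R^4$. Concretely I would introduce
\[
Q(\lambda,x,y,z,t)=\frac{x^2}{a^2+\lambda}+\frac{y^2}{b^2+\lambda}+\frac{z^2}{c^2+\lambda}-\frac{t^2}{d^2-\lambda}-1=0,
\]
with $\lambda=0$ recovering $Q_1$. Fixing a point $p=(x,y,z,t)$ with $xyzt\ne 0$ on $Q_1$ and viewing $Q(\lambda,p)$ as a rational function of $\lambda$, a sign analysis at the poles shows that its three real roots $u,v,w$ lie one in each of the intervals $(d^2,\infty)$, $(-b^2,-c^2)$ and $(-a^2,-b^2)$. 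Solving the system $Q(u,p)=Q(v,p)=Q(w,p)=0$ as a linear system in $x^2,y^2,z^2,t^2$ by Lagrange-type interpolation produces the explicit formulas \eqref{eq:AQ1R4}. The diffeomorphism $Q_1\cong\mathbb S^2\times\mathbb R$ follows independently from the projection $(x,y,z,t)\mapsto t$, whose level sets are the ellipsoids $x^2/a^2+y^2/b^2+z^2/c^2=1+t^2/d^2$.

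Next I would compute the first and second fundamental forms in the parametrization $(u,v,w)\mapsto\varphi_i(u,v,w)$. The computation mirrors \eqref{Ifelip}--\eqref{IIfelip} step by step: both forms come out simultaneously diagonal with coefficients involving the polynomial
\[
\xi_1(x)=(a^2+x)(b^2+x)(c^2+x)(d^2-x),
\]
the simultaneous diagonalization being exactly Dupin's theorem applied to this confocal family. The principal curvatures $k_i=b_{ii}/g_{ii}$ are then read off as in \eqref{eq:AQ1p}; the inward normal $-\nabla Q_1$ fixes the signs so that $k_1<0<k_2\le k_3$ once the ranges of $u,v,w$ are imposed.

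The umbilic analysis is then direct. From \eqref{eq:AQ1p}, $k_1=k_2$ forces $u=w$, which is impossible because $u>d^2>0>w$; hence $\mathcal P_{12}=\emptyset$. The equality $k_2=k_3$ forces $v=w$, which can only occur at the common endpoint $v=w=-b^2$ of their coordinate intervals, corresponding exactly to the hyperplane $y=0$ where the chart degenerates. Substituting $v=w=-b^2$ into \eqref{eq:AQ1R4} reduces the defining equation to $x^2/a^2+z^2/c^2-t^2/d^2=1$ on $\{y=0\}$, which, intersected with $\{xzt\ne 0\}$ and the sign choices in each octant, splits into four open arcs. Since the $\mathcal F_1$-leaves are the $u$-coordinate curves and $u$ ranges over the unbounded interval $(d^2,\infty)$, with the endpoints corresponding to $t=0$ on one side and escape to infinity on the other, each leaf is an unbounded open arc.

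The main technical obstacle is the analysis across the degeneracies of the chart. Verifying that $\mathcal P_{23}$ consists of precisely four smooth open arcs (and not closed curves or isolated points) requires tracking how the sixteen principal charts glue along the coordinate hyperplanes $\{xyzt=0\}$ of $Q_1$, and ensuring that no new partially umbilic components appear at infinity. This is the analogue of the transversal analysis carried out in \cite{bsbm14} for $Q_0$, but it must be adapted to the noncompact product topology $\mathbb S^2\times\mathbb R$ and to the possibility of arcs escaping to infinity along the $t$-direction.
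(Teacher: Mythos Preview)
Your proposal is correct and follows essentially the same route as the paper: introduce the confocal family, solve the linear system in $x^2,y^2,z^2,t^2$ to obtain \eqref{eq:AQ1R4}, compute the diagonal first and second fundamental forms in terms of $\xi_1$, and read off the principal curvatures. Your treatment is in fact slightly more complete than the paper's own proof, which stops after the curvature formulas and defers the analysis of the four partially umbilic arcs and of the $\mathcal F_1$-leaves to the subsequent Lemma~\ref{lem:AQ1R4} and Theorem~\ref{th:Q1R4}; your explicit argument for $Q_1\cong\mathbb S^2\times\mathbb R$ via the $t$-projection and your remark on gluing across $\{t=0\}$ fill in points the paper leaves implicit.
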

 
 \begin{proof}
 It will be shown that  the quadric $
 Q_1
 $ belongs to a quadruply orthogonal family of quadrics.
 For $p = (x,y,z,t) \in \mathbb R^4$ and  $ \lambda \in  \mathbb R$,  let
 $$ Q(p,\lambda) =  \dfrac{x^2}{(a^2  +\lambda)} +\dfrac{ y^2}{(b^2 +\lambda)} -\dfrac{
 z^2}{(c^2  -\lambda)}-  \dfrac{t^2}{(d^2  -\lambda)}.  $$
 For   $p = (x,y,z,t) \in 
 Q_1\cap \{(x,y,z,t): xyzt\ne 0\} $  let $ u, v $  and
  $ w $ be the solutions of  system
  $$Q_1(p,u)=Q_1(v,p)=Q_1(p,w)=Q_1(p,0)=0.$$
  
 Solving   
 this linear system 
 with respect to the variables $x^2, y^2, z^2, t^2$ 
 leads to the parametrizations  of  
 equations \eqref{eq:AQ1R4} denoted by 
  $\psi(u,v,w)=(x,y,z,t) $,
    defined     in  the  connected  components of 
  $
  Q_1 \cap \{(x,y,z,t): xyzt\ne 0\}$.
   By symmetry 
   considerations,  it is sufficient to take 
  only the positive    orthant 
    $\{(x,y,z,t): x> 0, y>0, z>0, t>0\}$.
  
  Consider the parametrization $ \psi(u,v,w)=\varphi^{-1}(u,v,w)=(x,y,z,t)$,
  with $(x,y,z,t)$ in the positive orthant.

  Evaluating $g_{11} = (x_u)^2 + (y_u)^2 + (z_u)^2 + (t_u)^2 $,
  $g_{22} = (x_v)^2 + (y_v)^2 + (z_v)^2 + (t_v)^2 $,
  $g_{33} = (x_w)^2 + (y_w)^2 + (z_w)^2 + (t_w)^2 $,
  $g_{ij}= 0,~i\not= j $, 
   it follows that the first fundamental form is given by:
 
 \begin{equation}
 \label{eq:q1gij} 
\aligned I=&
 \frac 14\,{\frac {u \left( u-w \right)  \left( u-v \right) }{- \xi_1(u)}}
  du^2 +  \frac 14\,{\frac {v \left( v-w \right)  \left( u-v \right) }{ \xi_1(v)}} dv^2 \\
  +& \frac 14\,{\frac {w \left( v-w \right)  \left( u-w \right) }{ \xi_1(w)}} dw^2 
  \\
      \xi _1(\lambda ) = &( a^2  +
      \lambda ) ( b^2 + \lambda )( c^2 + \lambda )( d^2  - \lambda
      ).\endaligned \end{equation}

   Additional calculation shows that the second fundamental form with respect to $N$, $b_{11}=\langle (x_{uu}, y_{uu}, z_{uu}, t_{uu}), N\rangle,\ldots, b_{33}=\langle (x_{ww}, y_{ww},z_{ww},t_{ww}), N\rangle,$  is given by: 
   {\small 
   \begin{equation}
   \label{eq:q1bij} \aligned II=&  \frac 14 \frac{abcd}{\sqrt {uvw} }\left[ \,  \frac{ \left( u-w \right)  \left( u-v \right)  }{\xi_1(u) }
   du^2 -   \frac{ \left( u-v \right)  \left( v-w \right)  }{
      \xi_1(v) }   dv^2 -   \frac { \left( u-w \right)  \left( v-w \right)  }{ 
   \xi_1(w) }  dw^2 \right].\endaligned \end{equation}
 }

 Therefore the  coordinate  lines are principal curvature lines and
   the principal curvatures
  $  b_{ ii} /g_{ii}, \; (i =1,2,3)$ are expressed by:
 $$l = -\frac 1u \left(\dfrac{abcd}{\sqrt{ uvw } }\right),\;\;
 m = -\frac 1v \left(\dfrac{abcd}{\sqrt{ uvw } }\right),\;\;
 n = - \frac 1w \left(\dfrac{abcd}{\sqrt{uvw } }\right).$$
 Since $u >0 $ and $0>  v\geq  w $ it follows that  $n = m $ if, and only
 if, $ w = v = -b^2 $. Also   $l<0<n\leq m $.   Therefore, for  $p\in 
 Q_1 \cap \{(x,y,z,t): xyzt \ne 0\}$
 it follows that the principal curvatures satisfy
 $l=k_1(p)<0< k_2(p)=n\leq k_3(p)=m$.
 \end{proof}

 \begin{lemma}
 \label{lem:AQ1R4}  The  four open 
 arcs of partially umbilic points 
 located
 in the hyperplane  $y=0$ given in Proposition \ref{prop:AQ1R4} are contained in the two dimensional hyperboloid of one  sheet  
 $\frac{x^2}{a^2} +\frac{z^2}{c^2}- \frac{t^2}{d^2} =1 $.

 In the  parametrization 
 $$\alpha(u,v,w)=(au,bv,cw,  d\sqrt{u^2+v^2+w^2-1})$$
the partially umbilic set is given by:
 
 $${\frac { \left( {a}^{2}+{d}^{2} \right) }{{a}^{2}-{b}^{2}}} {u}^{2} -{
 \frac {\left( {c}^{2}+{d}^{2} \right) }{{b}^{2}-{c}^{2}}} {w}^{2} -1=0,\; u^2+w^2\geq 1, \;v=0.$$

 All  partially umbilic  arcs  are biregular  and  have vanishing torsion   only at $s=0$.

 \end{lemma}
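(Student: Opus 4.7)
The plan is to prove the lemma in three steps via the principal chart from Proposition \ref{prop:AQ1R4}. For the containment claim, Proposition \ref{prop:AQ1R4} gives $\mathcal P_{12}=\emptyset$, so only $k_2=k_3$ is relevant; by \eqref{eq:AQ1p}, this is equivalent to $v=w$, and the chart ranges $v\in(-b^2,-c^2)$, $w\in(-a^2,-b^2)$ force $v=w=-b^2$ in the closure. The $y^2$-formula in \eqref{eq:AQ1R4} contains the factor $(b^2+v)(b^2+w)$, which then vanishes, so $y=0$; intersecting $Q_1=0$ with $\{y=0\}$ yields the stated $2$-dimensional hyperboloid of one sheet.

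For the equation in $\alpha$-coordinates, $y=0$ corresponds to $v=0$ in $\alpha(u,v,w)=(au,bv,cw,d\sqrt{u^2+v^2+w^2-1})$, and $u^2+w^2\ge 1$ is needed for $t$ to be real. Setting $U=x/a$, $W=z/c$ and evaluating \eqref{eq:AQ1R4} at $v_p=w_p=-b^2$ gives
\[U^2=\frac{(a^2-b^2)(a^2+u_p)}{(a^2-c^2)(a^2+d^2)},\qquad W^2=\frac{(b^2-c^2)(c^2+u_p)}{(a^2-c^2)(c^2+d^2)},\]
from which the direct cancellation
\[\frac{a^2+d^2}{a^2-b^2}\,U^2-\frac{c^2+d^2}{b^2-c^2}\,W^2=\frac{(a^2+u_p)-(c^2+u_p)}{a^2-c^2}=1\]
yields the stated equation.

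For biregularity and torsion, each arc lies in the $3$-hyperplane $\{y=0\}\cong\mathbb R^3_{(x,z,t)}$, so it is studied as a curve in $\mathbb R^3$. Solving the linear system in $(u^2,w^2)$ given by the partially umbilic hyperbola and the ambient relation $u^2+w^2=1+t^2/d^2$ yields, with $s=t$, $k_1=(c^2+d^2)/[d^2(b^2+d^2)]$, and $k_2=(a^2+d^2)/[d^2(b^2+d^2)]$,
\[u(s)^2=\tfrac{a^2-b^2}{a^2-c^2}(1+k_1 s^2),\qquad w(s)^2=\tfrac{b^2-c^2}{a^2-c^2}(1+k_2 s^2).\]
A sign choice $(\epsilon_u,\epsilon_w)\in\{\pm1\}^2$ picks one of the four arcs, parametrized as $\gamma(s)=(C_1\sqrt{1+k_1 s^2},\,C_2\sqrt{1+k_2 s^2},\,s)$ with nonzero constants $C_i$. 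Since $t'\equiv 1$ and $x''(s),z''(s)=C_ik_i(1+k_is^2)^{-3/2}$ never vanish, $\gamma'\times\gamma''\neq 0$ and the arc is biregular. The torsion numerator reduces to $x''z'''-z''x'''$; using the identity $k_2(1+k_1s^2)-k_1(1+k_2s^2)=k_2-k_1$, this becomes a nonzero constant multiple of $s/[(1+k_1s^2)(1+k_2s^2)]^{5/2}$ with the nonzero factor $k_2-k_1=(a^2-c^2)/[d^2(b^2+d^2)]$. Therefore the torsion vanishes only at $s=0$, the vertex where $u^2+w^2=1$ and $t=0$. The main obstacle is this torsion computation; it is made tractable by the single-parameter parametrization $s=t$ and the algebraic identity above, which isolates the unique real zero.
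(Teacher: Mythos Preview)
Your argument is correct and follows the same route as the paper: identify the partially umbilic locus in the principal chart as $v=w=-b^2$ via \eqref{eq:AQ1p}, read off $y=0$ from \eqref{eq:AQ1R4}, and then pass to the $\alpha$-chart. The paper's proof is terse---it only records $v=w=-b^2$, invokes symmetry, and dismisses biregularity with ``direct analysis''---whereas you carry out that analysis explicitly: your parametrization by $s=t$ and the identity $k_2(1+k_1s^2)-k_1(1+k_2s^2)=k_2-k_1$ cleanly isolate the single torsion zero, which the paper does not actually compute.
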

 
 \begin{proof} From equations (\ref{eq:AQ1R4}) and (\ref{eq:AQ1p})  it follows that $k_2(p)=k_3(p)$ is defined by $v=w=-b^2$ and $u\in (d^2,\infty)$.
 From the symmetries of $
 Q_1$ it follows that one connected component is contained in the region $\{ (x,y,z,t):\; x>0, y=0, z>0\}.$
 Direct analysis shows that each connected  component is a biregular curve.
 \end{proof}

      \begin{lemma}\label{lem:CQ1R4} Let $\lambda \in (d^2,\infty)$ and consider the quartic surface
    $  Q_\lambda= Q_\lambda^{-1}(0)\cap Q_1$.
    
     Then $   Q_\lambda$ has two connected components, both are diffeomorphic to an ellipsoid with three different axes  and there exists a  principal parametrization of $  Q_\lambda$  such that
    the principal lines are the coordinates curves. Therefore, each connected component $ 
    Q_{\lambda}^i, \; (i=1,2)$,  of  
    $
    Q_\lambda$ is principally equivalent to  an ellipsoid of Monge (three different axes), i.e.,  there exists a homeomorphism $h_i:  Q_{\lambda}^i\to 
    q_0
    $  of equation (\ref{eq:q0}), preserving both principal foliations and singularities.

   \end{lemma}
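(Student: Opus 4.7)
The plan is to restrict the principal parametrization of $Q_1$ from Proposition \ref{prop:AQ1R4} by freezing $u=\lambda$ and letting $(v,w)$ sweep the rectangle $(-b^2,-c^2)\times(-a^2,-b^2)$. Each of the sixteen sign-orthants of $\{xyzt\neq 0\}\cap Q_1$ yields a coordinate patch of $Q_\lambda$, which will be glued across the coordinate hyperplanes exactly as in the treatment of the ellipsoid $q_0$ in Lemma \ref{lem:q0r3conforme}. A key observation is that on $Q_1$ the curves with $v$ or $w$ varying (the other two coordinates fixed) are integral curves of the principal foliations $\mathcal F_3$ and $\mathcal F_2$, respectively; restricting to $u=\lambda$, the coordinate curves of the parametrization of $Q_\lambda$ are therefore automatically aligned with the traces of these two principal foliations, giving the asserted principal parametrization.

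First I would show that $Q_\lambda$ has exactly two connected components. Substituting $u=\lambda>d^2$ and $v,w<0$ in the expression for $t^2$ in \eqref{eq:AQ1R4}, the three factors $(d^2-u),(d^2-v),(d^2-w)$ have definite signs and are all nonzero, so $t^2>0$ everywhere on $Q_\lambda$; hence $Q_\lambda\cap\{t=0\}=\emptyset$. The isometry $t\mapsto -t$ preserves both $Q_1$ and the level $\{Q(\cdot,\lambda)=1\}$, so it exchanges $Q_\lambda^{+}:=Q_\lambda\cap\{t>0\}$ with $Q_\lambda^{-}:=Q_\lambda\cap\{t<0\}$. With the sign of $t$ fixed, the eight sign-cells for $(x,y,z)$ are glued along the coordinate arcs $\{x=0\}$, $\{y=0\}$, $\{z=0\}$ in exact combinatorial analogy with the eight-octant decomposition of $q_0$ in Lemma \ref{lem:q0r3conforme}, yielding $Q_\lambda^{i}\cong \mathbb{S}^{2}$.

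Finally, the four points with $v=w=-b^{2}$ on $Q_\lambda^{i}$ are precisely the intersections of $Q_\lambda^{i}$ with the partially umbilic arcs of Lemma \ref{lem:AQ1R4} (there $y=0$ and the $(u,v,w)$-chart degenerates), and they play on $Q_\lambda^{i}$ the role of the four umbilics of $q_{0}$. The homeomorphism $h_{i}:Q_\lambda^{i}\to q_{0}$ is constructed by matching the rectangle $(v,w)\in(-b^{2},-c^{2})\times(-a^{2},-b^{2})$ of $Q_\lambda^{i}$ with the principal rectangle of $q_{0}$ from Lemma \ref{lem:q0r3conforme} via the identification of the ranges, and gluing the eight octant cells in parallel on the two sides; by construction $h_{i}$ sends each principal curve of $Q_\lambda^{i}$ to a principal curve of $q_{0}$ and each partially umbilic corner to an umbilic. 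The main obstacle is the consistency of the gluing across the boundary arcs where the parametrization formulas degenerate: at $\{x=0\}$, $\{y=0\}$, $\{z=0\}$ one must verify that the squared coordinate expressions in \eqref{eq:AQ1R4} admit real analytic branches matching across the corresponding hyperplane of $\mathbb{R}^{4}$, which is handled stratum by stratum by the same analytic-continuation argument used in the proof of Lemma \ref{lem:q0r3conforme}.
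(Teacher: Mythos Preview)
Your proposal is correct and follows essentially the same route as the paper: both freeze $u=\lambda$ in the principal chart of Proposition~\ref{prop:AQ1R4} to obtain $\psi_\lambda(v,w)=\psi(\lambda,v,w)$ on $(-b^2,-c^2)\times(-a^2,-b^2)$, identify the two connected components as lying in $\{t>0\}$ and $\{t<0\}$ (you via a direct sign check on the expression for $t^2$, the paper via the observation that the confocal quadric $Q_\lambda$ has signature $(++++)$ for $\lambda>d^2$), and then invoke the conformal rectangle structure of Lemma~\ref{lem:q0r3conforme} to produce the principal equivalence with $q_0$. Your write-up is somewhat more explicit about the octant gluing and the role of the corners $v=w=-b^2$ as the four ``umbilics'' of $Q_\lambda^i$, which the paper leaves to a reference to Figure~\ref{fig:cq} and \cite{bsbm14}.
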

   
   \begin{proof} 
   The intersection $Q_\lambda\cap  Q_1$ is   the same as $C_\lambda\cap   Q_1$, where
    \begin{equation}\aligned
     \label{eq:cone3}
     C_\lambda=& \left( \frac{1}{  {a}^{2}+\lambda}   -\frac{1}{a^{2} }\right) {x}^{2}+
       \left( \frac{1}{  {b}^{2}+\lambda}   -\frac{1}{b^{2} }\right) {y}^{2}+
   \left( \frac{1}{  {c}^{2}+\lambda}   -\frac{1}{c^{2} }\right){z}^{2}\\
   +&
       \left( \frac{1}{d^2}- \frac{1}{ {d}^{2}-\lambda} \right) {t}^{2}\endaligned
      \end{equation}

 For $\lambda \in (d^2,\infty)$  the quadratic form $Q_\lambda(x,y,z,t)$ has signature 0 $(++++)$, so it follows that $Q_\lambda$ has two connected components, each one  diffeomorphic to an ellipsoid and 
 which are contained in the regions $t>0$ and $t<0$.
 
  It follows from Propositions  \ref{prop:qelip} and  \ref{prop:AQ1R4}  that  $\psi_\lambda(v,w)=\psi(\lambda,v,w)$, $\psi:(-b^2,-c^2)\times (-a^2,-b^2)\to Q_\lambda$ is a parametrization  by principal curvature lines  of $Q_\lambda$
   in the region $Q_\lambda
   \cap \{(x,y,z,t),   t>0)\}$.  By symmetry, the component in the region $t<0$ can be parametrized 
 in similar way.
   
  The principal equivalence stated follows from the fact that  $Q_\lambda$  has  conformal principal charts as in    Lemma \ref{lem:q0r3conforme}. See  
  figure \ref{fig:cq}  and  
   also  \cite{bsbm14}.
   \end{proof}

     \begin{lemma}\label{lem:BQ1R4}
      Let $\lambda \in (-b^2,-c^2)$ and consider   the intersection of the quadric
      $$Q_\lambda(x,y,z,t)=\frac{x^2}{a^2+\lambda}+\frac{y^2}{b^2+\lambda}+
      \frac{z^2}{c^2+\lambda}-\frac{t^2}{d^2-\lambda}=1,\;\; a>  b>c>0,\; d>0,$$
  with the quadric $ Q_1=Q_0^{-1}(0)$.
  Let $ Q_\lambda= Q_\lambda^{-1}(0)\cap   Q_1$.
      Then $  Q_\lambda$ has two connected components, contained in the regions $z>0$ and $z<0$, both are diffeomorphic to a hyperboloid of one sheet with three different axes  and there exists a  principal parametrization of $  Q_\lambda$  such that
     the principal lines are the coordinates curves. Therefore, each connected component of  $ Q_\lambda$ is principally equivalent to a hyperboloid of one sheet.

    \end{lemma}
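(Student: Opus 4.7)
The plan is to adapt the proof of Lemma \ref{lem:CQ1R4}, replacing the role of the ellipsoid $q_0$ of Proposition \ref{prop:qelip} by the one-sheeted hyperboloid $q_1$ of Proposition \ref{prop:q1folha}. First I would identify the intersection $Q_\lambda \cap Q_1$ with $C_\lambda \cap Q_1$, where
$$C_\lambda := Q_\lambda - Q_1 = -\lambda\left[\tfrac{x^2}{a^2(a^2+\lambda)} + \tfrac{y^2}{b^2(b^2+\lambda)} + \tfrac{z^2}{c^2(c^2+\lambda)} + \tfrac{t^2}{d^2(d^2-\lambda)}\right]$$
is the homogeneous quadratic form obtained by subtracting the two defining equations. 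For $\lambda \in (-b^2,-c^2)$ the factor $-\lambda$ is positive and the four bracketed denominators have signs $(+,+,-,+)$; hence the cone $\{C_\lambda = 0\}$ is Lorentzian with its single negative coefficient on $z^2$, consisting of two nappes, one in $\{z > 0\}$ and one in $\{z < 0\}$. This separates $Q_\lambda \cap Q_1$ into the two components $Q_\lambda^+$ and $Q_\lambda^-$ described in the statement.

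Next I would exhibit the principal parametrization on each component by fixing the middle coordinate $v = \lambda$ in the $(u,v,w)$-parametrization $\psi$ of $Q_1$ from Proposition \ref{prop:AQ1R4}. Setting $dv = 0$ in the fundamental forms \eqref{eq:q1gij}--\eqref{eq:q1bij} of $Q_1$ leaves diagonal expressions in $du^2$ and $dw^2$ whose form is formally identical to those derived in the proof of Proposition \ref{prop:q1folha} for $q_1$, with $u \in (d^2,\infty)$ matching the unbounded principal parameter and $w \in (-a^2,-b^2)$ the bounded one. Invoking the reflection symmetries $x,y,t \mapsto -x,-y,-t$ of $Q_1$, this local parametrization glues across the coordinate hyperplanes to yield a global parametrization of $Q_\lambda^+$ (and, by $z \mapsto -z$, of $Q_\lambda^-$) with image homeomorphic to $\mathbb{S}^1 \times \mathbb{R}$, which confirms the one-sheeted hyperboloid topology.

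The required principal equivalence $h^\pm : Q_\lambda^\pm \to q_1$ then follows from the conformal change of variables $d\tau_1 = \sqrt{-u/(2h(u))}\,du$, $d\tau_2 = \sqrt{v/(2h(v))}\,dv$ employed in the proof of Proposition \ref{prop:q1folha}: applied to the restricted fundamental forms, it produces a conformal principal chart on each $Q_\lambda^\pm$ targeting a rectangle $\mathbb{R} \times [-s_2^\lambda, s_2^\lambda]$ of the same type as the analogous chart for $q_1$, and composing (after a trivial rescaling of the compact factor) yields the desired homeomorphism mapping principal foliations to principal foliations. The main obstacle I anticipate is verifying that restricting the second fundamental form of $Q_1$ to $\{v = \lambda\}$ yields, up to a conformal factor, the second fundamental form of $Q_\lambda^\pm$ embedded as a 2-surface in $\mathbb{R}^4$, so that its $(u,w)$-coordinate curves are genuinely principal; this orthogonality follows from Dupin's theorem for the confocal family of quadrics but warrants explicit calculation.
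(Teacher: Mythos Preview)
Your proposal is correct and follows essentially the same route as the paper: identify $Q_\lambda\cap Q_1$ with the cone $C_\lambda\cap Q_1$ of signature $(++-+)$ to get the two components in $\{z>0\}$ and $\{z<0\}$, then freeze $v=\lambda$ in the principal chart $\psi(u,v,w)$ of Proposition~\ref{prop:AQ1R4} and invoke the conformal chart of Proposition~\ref{prop:q1folha} to obtain the principal equivalence with $q_1$. The obstacle you flag is not really present: since $(u,v,w)$ is already a principal chart for $Q_1$, the coordinate curves of $\psi_\lambda(u,w)=\psi(u,\lambda,w)$ are automatically principal lines of $Q_1$ restricted to the leaf $\{v=\lambda\}$, and that is all the lemma asserts.
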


  \begin{proof} The principal chart defined by equation \eqref{eq:AQ1R4} has the first and second fundamental forms given by equations \eqref{eq:q1gij} and \eqref{eq:q1bij}.
  
  The intersection $Q_\lambda\cap Q_1$ is also the same as $C_\lambda\cap   Q_1$, where
   
  \begin{equation}\aligned
  \label{eq:cone4}
  C_\lambda=& \left( \frac{1}{  {a}^{2}+\lambda}   -\frac{1}{a^{2} }\right) {x}^{2}+
    \left( \frac{1}{  {b}^{2}+\lambda}   -\frac{1}{b^{2} }\right) {y}^{2}+
\left( \frac{1}{  {c}^{2}+\lambda}   -\frac{1}{c^{2} }\right){z}^{2}\\
+&
    \left( \frac{1}{d^2}- \frac{1}{ {d}^{2}-\lambda} \right) {t}^{2}\endaligned
   \end{equation}
   
   The quadratic form $C_\lambda(x,y,z,t)$, $\lambda\in (-b^2,-c^2)$  has signature 1 $(++ - +)$ in the    
 sense of Morse.
  
   Therefore, for   $\lambda \in (-b^2,-c^2)$, 
    it follows from Propositions  \ref{prop:q1folha} and \ref{prop:AQ1R4}  that $\psi_\lambda(u,w)=\psi(u,  \lambda,w)$, $\psi:(d^2,\infty)\times (-a^2,-b^2)\to Q_\lambda$ is a parametrization of $Q_\lambda$
  in the region $Q_\lambda
  \cap \{(x,y,z,t),   z>0)\}$ by principal curvature lines. The other component, contained  in the region $z<0$, can be parametrized similarly.
  It follows that $Q_\lambda$
    has two connected components, one contained in the region $z>0$ and the other in the region $z<0$.
   The conclusion of the proof is similar to that of Lemma \ref{lem:CQ1R4}.

     The principal equivalence stated follows from the fact that  $Q_\lambda$  has  conformal principal charts similar 
     to those established  in  Lemma \ref{lem:q0r3conforme}. See figure \ref{fig:cq} and also  \cite{bsbm14}.
  \end{proof}

     \begin{lemma}\label{lem:DQ1R4}
      Let $\lambda \in (-a^2,-b^2)$ and consider   the intersection of the quadric
      $$Q_\lambda(x,y,z,t)=\frac{x^2}{a^2+\lambda}+\frac{y^2}{b^2+\lambda}+
      \frac{z^2}{c^2+\lambda}-\frac{t^2}{d^2-\lambda}=1,\;\; a > b>c>0,\; d>0,$$
  with the quadric $ Q_1=Q_0^{-1}(0)$.
  
   Let $  Q_\lambda= Q_\lambda^{-1}(0)\cap Q_1$.
        Then $ Q_\lambda$ has two connected components, contained in the regions $x>0$ and $x<0$, both are diffeomorphic to a hyperboloid of one sheet  with three different axes  and there exists a  principal parametrization of $  Q_\lambda$  such that
       the principal lines are the coordinates curves. Therefore, each connected component of  $  Q_\lambda$ is principally equivalent to a hyperboloid of one  sheet.
  
   \end{lemma}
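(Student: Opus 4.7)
The plan is to mirror the proof of Lemma \ref{lem:BQ1R4} almost verbatim, adjusting only for the new parameter range. As there, the starting observation is the algebraic identity $Q_\lambda \cap Q_1 = C_\lambda \cap Q_1$, where $C_\lambda$ is the homogeneous quadratic form of \eqref{eq:cone4}. The first substantive step is to recompute the signature of $C_\lambda$ in the range $\lambda \in (-a^2,-b^2)$: here $a^2+\lambda>0$, $b^2+\lambda<0$, $c^2+\lambda<0$, and $d^2-\lambda>0$, so the coefficients of $x^2, y^2, z^2, t^2$ in $C_\lambda$ carry signs $(+,-,-,+)$, yielding a Morse signature of $(2,2)$ rather than the $(3,1)$ form appearing in Lemma \ref{lem:BQ1R4}.

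Next I would invoke Proposition \ref{prop:AQ1R4}: the principal parametrization $\psi(u,v,w)$ is defined on $(d^2,\infty)\times(-b^2,-c^2)\times(-a^2,-b^2)$, so fixing the third coordinate at $w=\lambda$ yields a principal parametrization $\psi_\lambda(u,v)=\psi(u,v,\lambda)$ of one piece of $Q_\lambda\cap Q_1$, with coordinate curves being principal lines by construction. From the explicit formula \eqref{eq:AQ1R4}, every factor entering $x^2$ is positive in the prescribed range (since $a^2+u$, $a^2+v$, $a^2+w$ are all positive), so $x\neq 0$ on the image. The symmetries $y\mapsto -y$, $z\mapsto -z$, $t\mapsto -t$ preserve both $Q_1$ and $Q_\lambda$ and glue the various orthant pieces into one connected component lying in $\{x>0\}$; the reflection $x\mapsto -x$ then produces the other component in $\{x<0\}$.

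To identify the topological type, I would restrict the first fundamental form \eqref{eq:q1gij} to $w=\lambda$: this is a diagonal quadratic form in $du,dv$ whose two coefficients, in view of $u>d^2>0$ and $-b^2<v<-c^2<0$, carry exactly the signs arising in the proof of Proposition \ref{prop:q1folha}. The $u$-direction is unbounded and the $v$-direction is bounded, reproducing after the $y,z,t$ symmetry extensions the product topology $\mathbb S^1\times\mathbb R$ of a one-sheeted hyperboloid with three distinct axes. The principal equivalence then follows exactly as in Proposition \ref{prop:q1folha} and Lemma \ref{lem:q0r3conforme}: introduce a conformal reparametrization $d\tau_1=\sqrt{-u/(2\xi_1(u))|_{w=\lambda}}\,du$ and the analogous $d\tau_2$ so that the restricted first fundamental form becomes a conformal multiple of the flat cylinder metric, and build the sought homeomorphism onto $q_1$ of \eqref{prop:q1folha} carrying coordinate lines to principal lines.

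The main obstacle I anticipate is the rigorous identification of the two connected components with $\{x>0\}$ and $\{x<0\}$, because the sixteen principal charts of $Q_1$ cover different orthants in $\mathbb R^4$, and one must track carefully which orthants get glued together by the discrete symmetries that fix $x$; this is not automatic from the $(2,2)$-signature analysis of $C_\lambda$ alone (a $(2,2)$ cone is connected away from the vertex), so the separation into two components is forced by the \emph{affine} condition $Q_1=1$ together with the explicit positivity of $x^2$ on the parametrized piece, rather than by the cone structure itself. Once that point is secured, the remainder is routine bookkeeping parallel to Lemma \ref{lem:BQ1R4}.
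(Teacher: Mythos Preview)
Your proposal is correct and follows essentially the same approach as the paper: rewrite $Q_\lambda\cap Q_1$ as $C_\lambda\cap Q_1$, fix $w=\lambda$ in the principal parametrization $\psi$ of Proposition~\ref{prop:AQ1R4} to obtain a principal chart $\psi_\lambda(u,v)=\psi(u,v,\lambda)$ on the piece with $x>0$, and then invoke Proposition~\ref{prop:q1folha} together with the conformal-chart argument of Lemma~\ref{lem:q0r3conforme} for the principal equivalence. The paper's own proof is considerably terser---it omits the explicit signature computation and the component-tracking discussion you flagged as the main obstacle---so your extra care on those points is appropriate, and the minor notational slip in your conformal reparametrization (the restriction should introduce factors $(u-\lambda)$ and $(v-\lambda)$ rather than affect $\xi_1$ itself) does not alter the argument.
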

   
   \begin{proof}
    The intersection $Q_\lambda\cap   Q_1$ is   the same as $C_\lambda\cap   Q_1$, where
      \begin{equation}\aligned
       \label{eq:cone5}
       C_\lambda= &\left( \frac{1}{  {a}^{2}+\lambda}   -\frac{1}{a^{2} }\right) {x}^{2}+
         \left( \frac{1}{  {b}^{2}+\lambda}   -\frac{1}{b^{2} }\right) {y}^{2}+
     \left( \frac{1}{  {c}^{2}+\lambda}   -\frac{1}{c^{2} }\right){z}^{2}\\
     +&
         \left( \frac{1}{d^2}- \frac{1}{ {d}^{2}-\lambda} \right) {t}^{2}\endaligned
        \end{equation}
        
         Therefore, for   $\lambda \in (-a^2,-b ^2)$,   it follows from Propositions  \ref{prop:q1folha} and \ref{prop:AQ1R4}  that $\psi_\lambda(u,v)=\psi( u,v, \lambda)$, $\psi:(d^2,\infty)\times (-b^2,-c^2)\to Q_\lambda$ is a parametrization of $Q_\lambda$
          in the region $Q_\lambda
          \cap \{(x,y,z,t),   x>0)\}$ by principal curvature lines. 
    The principal equivalence stated follows from the fact that  $  Q_\lambda$  has  conformal principal charts 
    as obtained in    Lemma \ref{lem:q0r3conforme}. See figure \ref{fig:cq} and  also  \cite{bsbm14}.
   \end{proof}

   The results above of this section are summarized in the following theorem.
   
    \begin{theorem} \label{th:Q1R4}
     The umbilic set  of  $ Q_1$ 
     in equation (\ref{eq:Q1})
     is empty and its partially umbilic set  consists of four curves
     ${\mathcal P}_{23}^1, \; {\mathcal P}_{23}^2,\; {\mathcal P}_{23}^3$ and $ {\mathcal P}_{23}^4$.
    
     Consider the parametrization $$\alpha(u,v,w)=(au,bv,cw,  d\sqrt{u^2+v^2+w^2-1}).$$
      
      In this parametrization the partially umbilic set is  given by:
      
      $${\frac { \left( {a}^{2}+{d}^{2} \right) }{{a}^{2}-{b}^{2}}} {u}^{2} - 
      \frac {\left( {c}^{2}+{d}^{2} \right) }{ b^{2}-{c}^{2} }  w^2 -1=0,\; u^2+w^2\geq 1, \;v=0.$$

       \noindent i)  \; All leaves of the  principal foliation  ${\mathcal F}_1  $
   are open arcs and diffeomorphic to  $\mathbb R.\;$

     The partially umbilic curves ${\mathcal P}_{23}^i$ , (i=1,\ldots , 4)
     are of type  $D_1$, its  partially umbilic 
     separatrix surfaces 
       span  open  bands  $W_1,\; W_2,\; W_3,\; W_4 $
     such that $\partial W_1=   {\mathcal P}_{23}^1\cup  {\mathcal P}_{23}^2$,    $\partial W_2=  {\mathcal P}_{23}^2\cup  {\mathcal P}_{23}^3$, 
     $  \partial W_3= {\mathcal P}_{23}^3\cup  {\mathcal P}_{23}^4$  and
     $\partial W_4=  {\mathcal P}_{23}^1\cup  {\mathcal P}_{23}^4$.
     
     \noindent   
     \noindent ii)   All the leaves of    the principal foliations ${\mathcal F}_2$ and ${\mathcal F}_3$ outside the bands $W_1,\; W_2,\; W_3$ and $ W_4 $ are compact.
      See illustration in Fig. \ref{fig:q1}.

   \begin{figure}[h]
   \begin{center}
     \def\svgwidth{0.90\textwidth}
          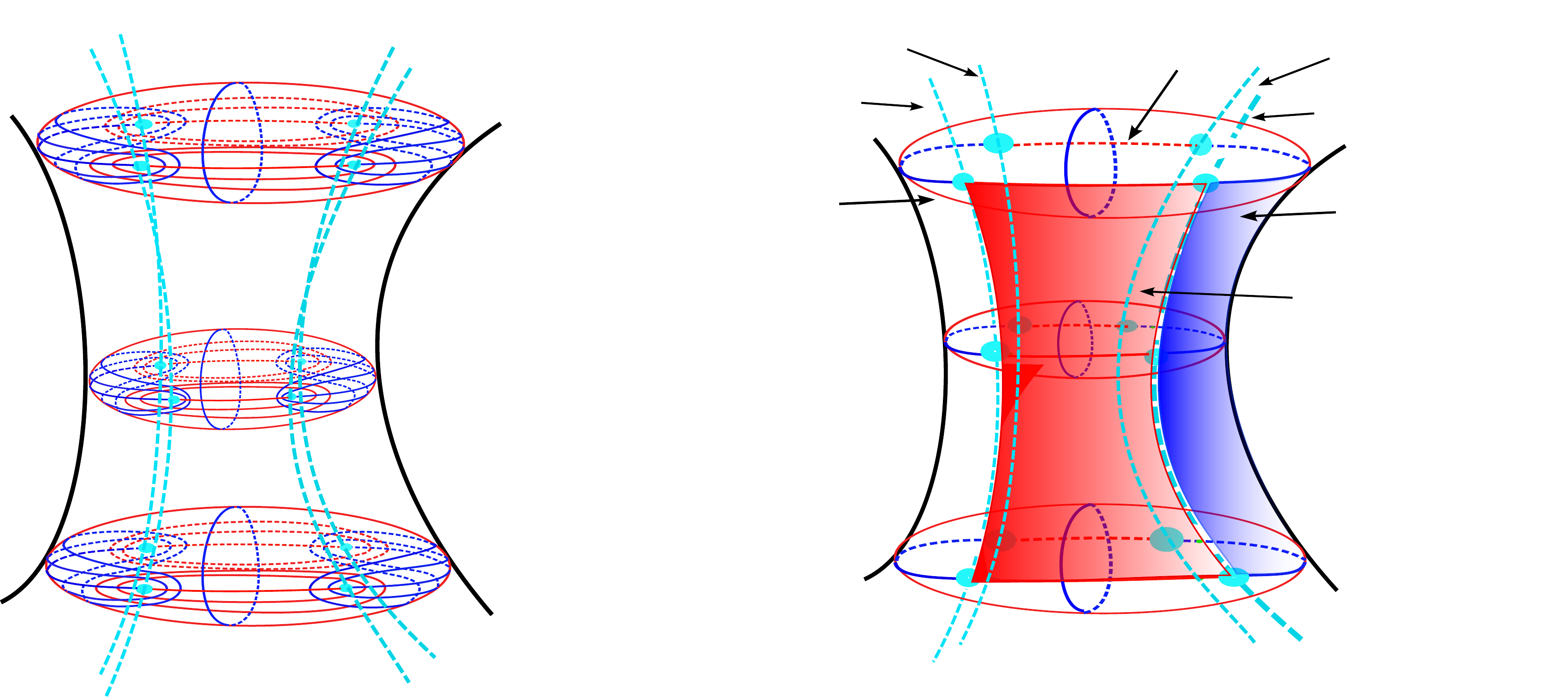
       \caption{Principal foliations $\mathcal{F}_i $ on the Ellipsoidal Hyperboloid of one Sheet. 
       }
     \label{fig:q1}
       \end{center}
   \end{figure}
   \end{theorem}

\section{Toroidal Hyperboloid in $\mathbb R^4$
}\label{ss:q2abcd}
 
\noindent In this section will be established  the global behavior of principal lines in the quadric $ Q_2$  of signature 2 $(++ - - ) $.

\begin{proposition}\label{prop:AQ2R4}
The quadric  $ Q_2 $  given by
\begin{equation} \label{eq:Q2}
Q_2(x,y,z,t)=\dfrac{x^2}{a^2 } +\dfrac{ y^2}{b^2 } -\dfrac{
z^2}{c^2 } - \dfrac{t^2}{d^2 } - 1=0, \;\;\; a>b>0, \; c>d>0,
\end{equation}
 is diffeomorphic to 
 ${\mathbb D}^2\times \mathbb S^1$
 and 
  has
sixteen  principal charts $(u,v,w)=\varphi_i(x,y,z,t)$, $u>v>w$,  where
$$\varphi_i^{-1}:   (c^2,\infty)  \times (d^2,c^2) \times (-a^2,-b^2) \to 
\{(x,y,z,t): xyzt\ne 0\}\cap Q_ 2^{-1}(0) $$
 is defined by equation
\eqref{eq:AQ2R4}.

\begin{equation}\label{eq:AQ2R4} \aligned x^2 &= {\frac { \left( {a}^{2}+w \right)  \left( {a}^{2}+v \right)  \left( {a
}^{2}+u \right) {a}^{2}}{ \left( {d}^{2}+{a}^{2} \right)  \left( {c}^
{2}+{a}^{2} \right)  \left( a^2-{b}^{2} \right) }},
\; y^2 = -{\frac {{b}^{2} \left( {b}^{2}+w \right)  \left( {b}^{2}+v \right), 
 \left( {b}^{2}+u \right) }{ \left( {d}^{2}+{b}^{2} \right)  \left( {c
}^{2}+{b}^{2} \right)  \left( -{b}^{2}+{a}^{2} \right) }},
\\
z^2 &=-{\frac {{c}^{2} \left( {c}^{2}-w \right)  \left( {c}^{2}-v \right) 
 \left( {c}^{2}-u \right) }{ \left( {c}^{2}+{b}^{2} \right)  \left( {c
}^{2}+{a}^{2} \right)  \left(c^2 -{d}^{2}\right) }}
,\;
 t^2 ={\frac {{d}^{2} \left( {d}^{2}-w \right)  \left( {d}^{2}-v \right) 
   \left( {d}^{2}-u \right) }{ \left( {d}^{2}+{a}^{2} \right)  \left( {d
  }^{2}+{b}^{2} \right)  \left(c^2 -{d}^{2} \right) }}.  
\endaligned \end{equation}

For all $p\in \{(x,y,z,t): xyzt \ne
0\}\cap Q_2^{-1}(0)$ and $Q_2=Q_2^{-1}(0)$ 
positively oriented
 by the normal 
 $N=\nabla Q_2 $ 
 the principal curvatures satisfy
$k_1(p)<0<k_2(p)\leq k_3(p)$ and in the chart $(u,v,w)$ are given by:

\begin{equation}\label{eq:AQ2p}
k_1(u,v,w)={\frac {abcd}{w\sqrt {-uvw}}},\; k_2(u,v,w)={\frac {abcd}{u\sqrt {-uvw}}},\;
k_3(u,v,w)={\frac {abcd}{v\sqrt {-uvw}}},\;
\end{equation}

 The partially umbilic set ${\mathcal P}_{23}=\{p: k_2(p)=k_3(p)\}$ is the union of  two closed curves contained in the hyperplane $z=0$ and ${\mathcal P}_{12}=\{p: k_1(p)=k_2(p)\}=\emptyset$.
 
 The leaves of the principal foliation ${\mathcal F}_1$ are all closed.
 
\end{proposition}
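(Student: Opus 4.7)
The plan is to mirror the proof of Proposition \ref{prop:AQ1R4} with the modifications appropriate to the $(++--)$ signature of $Q_2$. For the topology, rewriting $Q_2 = 0$ as $(x/a)^2 + (y/b)^2 = 1 + (z/c)^2 + (t/d)^2$ shows that the projection $(x,y,z,t) \mapsto (z,t)$ realises $Q_2$ as an $\mathbb{S}^1$-bundle over $\mathbb{R}^2$, trivialised by $\theta \mapsto \sqrt{1 + (z/c)^2 + (t/d)^2}(\cos\theta, \sin\theta)$; hence $Q_2 \cong \mathbb{R}^2 \times \mathbb{S}^1 \cong \mathbb{D}^2 \times \mathbb{S}^1$.

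To produce the principal charts, I would embed $Q_2$ into the confocal family
\[
Q(p,\lambda) = \frac{x^2}{a^2+\lambda} + \frac{y^2}{b^2+\lambda} - \frac{z^2}{c^2-\lambda} - \frac{t^2}{d^2-\lambda}.
\]
For $p \in Q_2 \cap \{xyzt \neq 0\}$, the rational function $\lambda \mapsto Q(p,\lambda) - 1$ has simple poles at $-a^2, -b^2, d^2, c^2$, strictly negative derivative on each component of its domain, and tends to $-1$ at $\pm\infty$. Since $\lambda = 0 \in (-b^2, d^2)$ is already a root, sign analysis at the remaining poles yields exactly one further root in each of $(c^2, \infty)$, $(d^2, c^2)$, and $(-a^2, -b^2)$: these are the chart parameters $u, v, w$. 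Solving the Vandermonde-type linear system $Q(p,u) = Q(p,v) = Q(p,w) = Q(p,0) = 1$ for $(x^2, y^2, z^2, t^2)$ then yields the explicit formulas \eqref{eq:AQ2R4}, and the $16$ sign orthants of $\{xyzt \neq 0\}$ give the $16$ principal charts.

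Next I would compute the fundamental forms. Dupin's theorem for confocal families gives $g_{ij} = 0$ for $i \neq j$; writing $\xi_2(\lambda) = (a^2+\lambda)(b^2+\lambda)(c^2-\lambda)(d^2-\lambda)$, the diagonal entries of $I$ and of $II$ (the latter with respect to the unit normal in the direction of $\nabla Q_2$) admit expressions analogous to \eqref{eq:q1gij}--\eqref{eq:q1bij}, from which the ratios $k_i = b_{ii}/g_{ii}$ produce \eqref{eq:AQ2p}. The inequalities $w < 0 < v < u$ force $k_1 < 0 < k_2 \leq k_3$, with equality $k_2 = k_3$ only at the boundary $u = v = c^2$ (which forces $z = 0$); and $k_1 = k_2$ would require the impossibility $w = u$, so $\mathcal{P}_{12} = \emptyset$.

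For the global geometric statements, on the locus $u = v = c^2$ the remaining parameter $w$ traverses $(-a^2, -b^2)$, with $x \to 0$ at $w = -a^2$ and $y \to 0$ at $w = -b^2$, while $t$ keeps a fixed sign. Gluing the resulting arcs across the coordinate hyperplanes $x = 0$ and $y = 0$ through the chart symmetries $x \mapsto -x$ and $y \mapsto -y$ produces one closed loop in $\{z = 0, t > 0\}$ and another in $\{z = 0, t < 0\}$, giving the two closed curves of $\mathcal{P}_{23}$. The same boundary-crossing argument closes each leaf of $\mathcal{F}_1$ (traced by varying $w$ with $(u,v)$ fixed) into an $\mathbb{S}^1$ after traversing four charts. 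The main obstacle is precisely this global bookkeeping: confirming that $\mathcal{P}_{23}$ has exactly two components, and that every $\mathcal{F}_1$-leaf genuinely closes rather than spirals, requires careful tracking of the $\mathbb{Z}_2^4$ symmetry action on the chart images, since the algebraic parametrisation itself only sees one orthant.
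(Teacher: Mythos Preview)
Your proposal is correct and follows essentially the same route as the paper: embed $Q_2$ in the confocal family $Q(p,\lambda)$, solve the resulting linear system to obtain \eqref{eq:AQ2R4}, compute the diagonal fundamental forms in terms of $\xi_2$, and read off the principal curvatures \eqref{eq:AQ2p}. Your topology argument via the $\mathbb S^1$-bundle projection $(x,y,z,t)\mapsto(z,t)$ is equivalent to the paper's explicit parametrisation $\beta(u,v,w)=(a\cos u\cosh w,\,b\sin u\cosh w,\,c\cos v\sinh w,\,d\sin v\sinh w)$; and your careful root-counting for $Q(p,\lambda)-1$ and your orthant-gluing discussion for the closedness of the $\mathcal P_{23}$ curves and the $\mathcal F_1$ leaves actually supply more detail than the paper gives in the proof of this proposition itself (the paper defers the $\mathcal P_{23}$ description to the subsequent Lemma~\ref{lem:pumbq2}).
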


\begin{proof} Let 
$$ Q_2(p,\lambda) =  \dfrac{x^2}{(a^2  +\lambda)} +\dfrac{ y^2}{(b^2 +\lambda)} -\dfrac{
 z^2}{(c^2  -\lambda)}-  \dfrac{t^2}{(d^2  -\lambda)} $$
 and consider the system $Q_2(p,u)=Q_2(v,p)=Q_2(p,w)=Q_2(p,0)=0.$
 
As in the proof of  Proposition \ref{prop:AQ1R4} it follows that the solution of the linear system above in the variables $x^2$, $y^2$, $z^2$ and $w^2$ is given by equation \eqref{eq:AQ2R4}.
 
 So 
the map $\varphi:  Q_2\cap \{(x,y,z,t): xyzt\ne 0\} \to (c^2,\infty)\times (d^2,c^2)\times  (-a^2,-b^2)   $ 
is well defined.

The map 
$$\varphi:   Q_2 \cap \{(x,y,z,t): xyzt\ne 0\}\to  (c^2,\infty)\times (d^2,c^2)\times  (-a^2,-b^2), \; $$
$\varphi (x,y,z,t) = ( u,v,w) $ is a regular covering
which defines a chart in each orthant of the quadric  $ Q_2$.

So, equations in 
 \eqref{eq:AQ2R4}   define  parametrizations $\psi(u,v,w)=(x,y,z,t) $ of the  connected  
 components of the region $ Q_2 \cap \{(x,y,z,t): xyzt\ne 0\}$.
 By symmetry, it is sufficient to consider only the positive orthant $\{(x,y,z,t): x> 0, y>0, z>0, t>0\}$.

Consider the parametrization $ \psi(u,v,w)=\varphi^{-1}(u,v,w)=(x,y,z,t)$,
with $(x,y,z,t)$ in the positive orthant.

Evaluating $g_{11}  = (x_u)^2 + (y_u)^2 + (z_u)^2 + (w_u)^2 $,
$g_{22}  = (x_v)^2 + (y_v)^2 + (z_v)^2 + (w_v)^2 $ and
$g_{33} = (x_w)^2 + (y_w)^2 + (z_w)^2 + (t_w)^2 $  and observing that
$g_{ij} = 0,~i\not= j $,  it follows that the first fundamental form is given by:

\begin{equation}\label{eq:gijQ2}
\aligned  I =& \frac 14\,{\frac {u \left( u-w \right)  \left( u-v \right) }{ \xi_2(u) }}
 du^2  
-\frac 14 \,{\frac {v \left( v-w \right)  \left( u-v \right) }{ \xi_2(v) }}
dv^2 \\
+&  \frac 14\,{\frac {w \left( v-w \right)  \left( u-w \right) }{ \xi_2(w) }}
dw^2\\
 \xi_2 (\lambda ) =& ( a^2  +
\lambda ) ( b^2 + \lambda )( c^2 - \lambda )( d^2  - \lambda
)\endaligned  .\end{equation}

 Let    $$N=\frac{\nabla Q_2}{|\nabla Q_2|}=\frac{abcd}{\sqrt{-uvw}} \left(\frac{x(u,v,w)}{a^2}, \frac{y(u,v,w)}{b^2},-\frac{z(u,v,w)}{c^2}, -\frac{t(u,v,w)}{d^2}\right).$$

Similar and straightforward calculation shows that the second fundamental form with respect to $N$ is given by:
{\small
\begin{equation}\label{eq:bijQ2} II = \frac{abcd}{4\sqrt{-uvw}}  \left[     \,{\frac {  \left( u-w \right)  \left( u-v \right) }{ \xi_2(u) }}
 du^2  +
  \,{\frac {  \left( v-w \right)  \left( u-v \right) }{ \xi_2(v) }}
dv^2 +   \,{\frac {  \left( v-w \right)  \left( u-w \right) }{ \xi_2(w) }}
dw^2  \right] \end{equation}
}
Therefore the  coordinate  lines are principal curvature lines and
  the principal curvatures
 $  b_{ ii} /g_{ii}, \; (i =1,2,3)$ are given by:
$$l = \frac 1u \left(\dfrac{abcd}{\sqrt{- uvw } }\right),\quad
m = \frac 1v \left(\dfrac{abcd}{\sqrt{ -uvw } }\right),\quad
n = \frac 1w \left(\dfrac{abcd}{\sqrt{- uvw } }\right).$$
Since $u \geq v>0  > w $ it follows that  $l = m $ if, and only
if, $ u = v = c^2 $. Also   $n<0<l\leq m $.   Therefore, for  $p\in Q_2  \cap \{(x,y,z,t): xyzt\ne 0\}$
it follows that the principal curvatures satisfy
$n=k_1(p)<k_2(p)=l\leq k_3(p)=m$.

The parametrization

$$\beta(u,v,w)=\left( a\cos u\cosh w ,b\sin u \cosh w ,c\cos v\sinh w ,d\sin v \sinh w\right)$$
 with $w\geq 0, u,v\in [0,2\pi],$
shows that the quadric $ Q_2$ is diffeomorphic to $\mathbb D^2\times \mathbb S^1$.

 \end{proof}

\begin{lemma}
\label{lem:pumbq2}  The closed  curves of partially umbilic points contained in the hyperplane  $z=0$ given in Proposition \ref{prop:AQ2R4} are contained in the two dimensional hyperboloid of one
sheet
$\frac{x^2}{a^2} +\frac{y^2}{b^2}- \frac{t^2}{d^2} =1 $ and $z=0$, in the regions $t>0$ and $t<0$ and are principal lines of this quadric as a surface  of $\mathbb R^3$.

 Both curves are biregular and 
 have  torsion zero when  intersecting  the coordinate planes of $(x,y,0,t).$
 
Consider the parametrizations of $ Q_2$ below.
\begin{equation}
\alpha_\pm(u,v,w)=\left(au,bv,cw,\pm d \sqrt {{u}^{2}+{v}^{2}-{w}^{2}-1} \right).
\end{equation}
In this parametrization the partially umbilic set is given by:

$$E(u,v)={\frac { \left( {d}^{2}+{a}^{2} \right) {u}^{2}}{{c}^{2}+{a}^{2}}}+{
\frac { \left( {b}^{2}+{d}^{2} \right) {v}^{2}}{{b}^{2}+{c}^{2}}}-1=0,\;\; w=0.$$

\end{lemma}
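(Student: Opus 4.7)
The plan is to identify $\mathcal{P}_{23}$ explicitly in the principal chart of Proposition~\ref{prop:AQ2R4}, then re-express the locus in the new $\alpha_\pm$-coordinates to derive $E(u,v)=0$, and finally invoke Dupin's theorem in $\mathbb{R}^3$ to recognise the two closed components as principal lines of the two-dimensional hyperboloid $H^2=\{\frac{x^2}{a^2}+\frac{y^2}{b^2}-\frac{t^2}{d^2}=1,\;z=0\}$. From \eqref{eq:AQ2p}, the equality $k_2(p)=k_3(p)$ reduces to $u=v$ in the principal chart; given the chart constraints $u\in(c^2,\infty)$, $v\in(d^2,c^2)$, $u>v$, this is attained only on the common boundary $u=v=c^2$, where the factor $(c^2-u)(c^2-v)$ appearing in the formula for $z^2$ in \eqref{eq:AQ2R4} vanishes and forces $z=0$. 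The companion expression for $t^2$ at $u=v=c^2$ simplifies to $t^2=\frac{d^2(d^2-w)(c^2-d^2)}{(d^2+a^2)(d^2+b^2)}>0$ for every $w\in(-a^2,-b^2)$, so the locus avoids $\{t=0\}$ entirely and splits into the two disjoint closed components covered by $\alpha_+$ (in $t>0$) and $\alpha_-$ (in $t<0$).

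To pass to the $\alpha_\pm$-coordinates---denoted $U=x/a$, $V=y/b$, $W=z/c$ to avoid clashing with the principal-chart labels---I would substitute $u=v=c^2$ into \eqref{eq:AQ2R4} and simplify the repeated factors $(a^2+c^2)$ and $(b^2+c^2)$, obtaining $U^2=\frac{(a^2+w)(a^2+c^2)}{(d^2+a^2)(a^2-b^2)}$ and $V^2=-\frac{(b^2+w)(b^2+c^2)}{(d^2+b^2)(a^2-b^2)}$, with $w$ ranging over the third principal-chart parameter; eliminating $w$ via the one-line identity $\frac{(d^2+a^2)U^2}{c^2+a^2}+\frac{(b^2+d^2)V^2}{b^2+c^2}=\frac{(a^2+w)-(b^2+w)}{a^2-b^2}=1$ produces $E(U,V)=0$, which is the stated equation once $(U,V,W)$ is renamed back to $(u,v,w)$ of the $\alpha_\pm$-parametrization. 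The principal-line property on $H^2$ then follows from Dupin's theorem in $\mathbb{R}^3$ applied to the confocal family $\tilde{q}(p,\mu)=\frac{x^2}{a^2+\mu}+\frac{y^2}{b^2+\mu}-\frac{t^2}{d^2-\mu}=1$ obtained by restricting the $\mathbb{R}^4$-confocal family used in Proposition~\ref{prop:AQ2R4} to $\{z=0\}$: the partially umbilic curve is precisely $H^2\cap\tilde{q}^{-1}(c^2)$, and since $c^2>d^2$ makes $\tilde{q}(\cdot,c^2)$ an ellipsoid, this intersection is the transverse meeting of two confocal surfaces and is therefore a principal line of $H^2$.

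Biregularity is a routine computation from the smooth $\alpha_\pm$-graph $t=\pm d\sqrt{u^2+v^2-1}$ along the ellipse $E(u,v)=0$: the tangent vector is nowhere zero and, together with the second derivative, spans a two-plane at every point. For the torsion claim I would exploit the reflection symmetries $x\mapsto-x$, $y\mapsto-y$, $t\mapsto-t$ of $Q_2$, each of which preserves both the quadric and the partially umbilic curve; at a point where the curve meets one of the two-planes $\{x=z=0\}$, $\{y=z=0\}$ or $\{t=z=0\}$, the induced involution $s\mapsto-s$ on a local arc-length parameter forces $\gamma'(0)$ and $\gamma'''(0)$ to lie along the one-dimensional fixed axis of the reflection while $\gamma''(0)$ lies in its orthogonal complement, so $\{\gamma',\gamma'',\gamma'''\}$ spans at most a two-plane at that point and the torsion vanishes there. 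The main obstacle I anticipate is the notational bookkeeping of the second step, because the symbols $(u,v,w)$ play two incompatible roles---principal-chart parameters in Proposition~\ref{prop:AQ2R4} and the direct ratios $(x/a,y/b,z/c)$ of the $\alpha_\pm$-parametrization---and these must be carefully disambiguated before the algebraic elimination can be carried out cleanly; once this is organised, the geometric content (confocal intersection equals principal line) is immediate from Dupin.
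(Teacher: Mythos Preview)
Your argument is correct and in fact more complete than the paper's own proof. The paper establishes $u=v=c^2$, $z=0$ from \eqref{eq:AQ2R4}--\eqref{eq:AQ2p} just as you do, but then derives the ellipse $E(u,v)=0$ by a different route: it computes the full first and second fundamental forms $g_{ij},b_{ij}$ of $Q_2$ in the $\alpha$-parametrization and reads off the partially umbilic locus from the system $(b_{11}g_{22}-b_{22}g_{11})(u,v,0)=0$, $(g_{12}b_{22}-g_{22}b_{12})(u,v,0)=0$ on the slice $w=0$. Your elimination of the principal-chart parameter $w$ from the explicit formulas for $U^2,V^2$ is shorter and avoids that computation entirely; the paper's approach has the advantage of being self-contained in the $\alpha$-chart (no need to juggle two coordinate systems), which is exactly the bookkeeping issue you flagged.

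The paper does not spell out the principal-line, biregularity, or torsion claims beyond ``direct analysis''; your Dupin argument for the principal-line property and your reflection-symmetry argument for the vanishing torsion are genuine additions. One small terminological slip: in the torsion paragraph the odd derivatives $\gamma'(0),\gamma'''(0)$ lie in the $(-1)$-eigenspace of the reflection (the one-dimensional axis being reflected), not its fixed set; the conclusion that $\{\gamma',\gamma'',\gamma'''\}$ spans at most a plane is unaffected.
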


\begin{proof}
From equations \eqref{eq:AQ2R4} and \eqref{eq:AQ2p}  it follows that the partially umbilic curves are defined by 
$u=v=c^2$, $w\in (-a^2,-b^2) $ and so it follows that:

$$\aligned x^2=& {\frac { \left( {a}^{2}+w \right)  \left( {c}^{2}+{a}^{2} \right) {a}
^{2}}{ \left( {d}^{2}+{a}^{2} \right)  \left( {a}^{2}-{b}^{2} \right) 
}}, \; y^2= -{\frac {{b}^{2} \left( {b}^{2}+w \right)  \left( {c}^{2}+{b}^{2}
 \right) }{ \left( {d}^{2}+{b}^{2} \right)  \left( {a}^{2}-{b}^{2}
 \right) }},\;
  z=0,\\ t^2=&{\frac { \left( c^2-d^2 \right)    \left( {d
 }^{2}-w \right) {d}^{2}}{ \left( {d}^{2}+{a}^{2} \right)  \left( {d}^{
 2}+{b}^{2} \right) }}, \; \; -a^2<w<-b^2. \endaligned $$

Consider now the parametrization 
$$\alpha (u,v,w)=\left(au,bv,cw,  d\sqrt {{u}^{2}+{v}^{2}-{w}^{2}-1}\right)$$
defined in $\{(u,v,w)\in \mathbb R^3: \;\Delta= u^2+v^2-w^2-1> 0\}$.
The  positive normal vector is proportional to:
 
$$N=\left( -bcd u,-acdv,abdw,abc\sqrt {{u}^{2}+{v}^{2}-{w}^{2}-1}\right).$$

Direct analysis shows that the first and second fundamental forms are given by:

\begin{equation}\aligned 
g_{11} =&a^2+\frac{d^2 u^2}{\Delta},\;\hskip .8cm  g_{22} = b^2+\frac{d^2 v^2}{\Delta},\; \hskip .8cm  g_{33}= c^2+\frac{d^2 w^2}{\Delta},\\
g_{ 12}=&\frac{ d^2 uv}{\Delta},\;\hskip 1.5cm   g_{13}=-\frac{ d^2 uw}{\Delta},\; \hskip 1.5cm  g_{23}=-\frac{ d^2 vw}{\Delta}\\
b_{11}=&\frac{abcd(v^2-w^2-1)}{\Delta^{\frac 32}},\; b_{12}= -\frac{abcd uv}{\Delta^{\frac 32}}, \hskip .7cm b_{13}=  \frac{abcd uw}{\Delta^{\frac 32}}\\
b_{22}=&\frac{abcd(u^2-w^2-1)}{\Delta^{\frac 32}},\; b_{23}=  \frac{abcd vw}{\Delta^{\frac 32}},\hskip .4cm b_{33}=  -\frac{abcd(u^2+v^2-1)}{\Delta^{\frac 32}}.
\endaligned
\end{equation}

From proposition \ref{prop:AQ2R4} the partially umbilic set is contained in the hyperplane $z=0$, so $w=0$. Direct calculation shows that the partially umbilic set is defined by equations 
$$(b_{11}g_{22}-b_{22}g_{11})(u,v,0)=0, \; (g_{12}b_{22}- g_{22}b_{12})(u,v,0)=0. $$

\end{proof}

   \begin{lemma}\label{lem:CQ2R4} Let $\lambda \in (c^2,\infty)$ and consider the quartic surface
 $ Q_\lambda= Q_\lambda^{-1}(0)\cap Q_2$.
 
  Then $\  Q_\lambda$ is diffeomorphic to a bidimensional torus of revolution and there exists a conformal principal parametrization of $ Q_\lambda$  such that
 the principal lines are the coordinates curves. Therefore,   $ Q_\lambda$ is principally equivalent to a torus of revolution of $\mathbb R^3$.

\end{lemma}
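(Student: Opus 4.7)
The plan is to follow the template of Lemmas \ref{lem:CQ1R4}, \ref{lem:BQ1R4} and \ref{lem:DQ1R4} by exploiting the confocal parametrization \eqref{eq:AQ2R4}. For $\lambda\in(c^2,\infty)$ the coefficients $(a^2+\lambda)^{-1},(b^2+\lambda)^{-1},(\lambda-c^2)^{-1},(\lambda-d^2)^{-1}$ of $Q_\lambda$ are all positive, so $Q_\lambda$ is an ellipsoid in $\mathbb R^4$ and $Q_\lambda\cap Q_2$ is compact. By Proposition \ref{prop:AQ2R4}, in each of the sixteen orthants $\{xyzt\ne 0\}$ the confocal coordinate $u$ of a point $p\in Q_2$ reads off which $Q_\lambda$ contains $p$, so fixing $u=\lambda$ and letting $(v,w)$ range in $(d^2,c^2)\times(-a^2,-b^2)$, the restriction $\psi(\lambda,v,w)$ of \eqref{eq:AQ2R4} parametrizes the orthant-piece of $Q_\lambda\cap Q_2$ by principal curvature coordinates.

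Setting $du=0$ in \eqref{eq:gijQ2} yields the diagonal induced first fundamental form
\[
I_\lambda \;=\; \frac{v-w}{4}\left[-\frac{v(\lambda-v)}{\xi_2(v)}\,dv^2\;+\;\frac{w(\lambda-w)}{\xi_2(w)}\,dw^2\right],
\]
whose bracketed coefficients are both strictly positive on the respective intervals (a short sign check using $\xi_2(v)<0$ on $(d^2,c^2)$ and $\xi_2(w)<0$ on $(-a^2,-b^2)$), as is $v-w$. The change of variables $d\tilde v=\tfrac12\sqrt{-v(\lambda-v)/\xi_2(v)}\,dv$ and $d\tilde w=\tfrac12\sqrt{w(\lambda-w)/\xi_2(w)}\,dw$ then brings $I_\lambda$ to the conformal form $(v(\tilde v)-w(\tilde w))(d\tilde v^2+d\tilde w^2)$. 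Since $\xi_2$ has only simple zeros at $-a^2,-b^2,d^2,c^2$, each integrand has integrable $1/\sqrt{\cdot}$ singularities, the totals $\tilde v_0,\tilde w_0$ are finite, and the chart maps the open rectangle $(0,\tilde v_0)\times(0,\tilde w_0)$ conformally and principally onto the intersection of $Q_\lambda\cap Q_2$ with a fixed orthant of $\mathbb R^4$.

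The four sides of this rectangle correspond to $t=0$, $z=0$, $x=0$, $y=0$ respectively (via the factors $(d^2-v)$, $(c^2-v)$, $(a^2+w)$, $(b^2+w)$ appearing in \eqref{eq:AQ2R4}), and no partially umbilic point is encountered since by Lemma \ref{lem:pumbq2} the set $\mathcal P_{23}$ lies on $u=v=c^2$ whereas here $u=\lambda>c^2$. Using the four reflection symmetries of $Q_2$ in the coordinate hyperplanes, the chart extends by Schwarz-type analytic continuation across each edge, producing a globally conformal principal parametrization of $Q_\lambda\cap Q_2$ on the enlarged rectangle $[-\tilde v_0,\tilde v_0]\times[-\tilde w_0,\tilde w_0]$ with opposite sides identified, that is, on a torus. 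Principal equivalence with a bidimensional torus of revolution in $\mathbb R^3$ follows because the latter also admits, via its parallels and meridians, a conformal principal parametrization on a doubly-periodic rectangle with no umbilics, so the rectangle-to-rectangle map respecting identifications furnishes the required principal homeomorphism. The main obstacle I expect is the combinatorial bookkeeping of how the sixteen $\mathbb R^4$-orthant pieces glue across the four coordinate hyperplanes to yield the torus (rather than, say, a Klein bottle or sphere); for this the explicit $\mathbb D^2\times\mathbb S^1$-parametrization $\beta$ from the proof of Proposition \ref{prop:AQ2R4} supplies an independent topological check, since its level sets of the radial parameter are visibly tori and coincide with the confocal slices $u=\lambda$, $\lambda\in(c^2,\infty)$.
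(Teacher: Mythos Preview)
Your approach is the one the paper (very tersely) indicates: restrict the confocal chart \eqref{eq:AQ2R4} to $u=\lambda$, make the induced metric conformal as in Lemma~\ref{lem:q0r3conforme}, and glue the orthant rectangles by the reflection symmetries of $Q_2$. Two points need correcting.

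First, the period of the torus is twice what you write. Each of $v$ and $w$ controls two sign bits ($v$ controls $\operatorname{sgn}z,\operatorname{sgn}t$ via the factors $c^2-v$ and $d^2-v$ in \eqref{eq:AQ2R4}; $w$ controls $\operatorname{sgn}x,\operatorname{sgn}y$ via $a^2+w$ and $b^2+w$), and a single reflection across an edge flips only one of them. Hence closing up in the $v$-direction requires traversing the four orthant types $(\operatorname{sgn}z,\operatorname{sgn}t)\in\{(+,+),(+,-),(-,-),(-,+)\}$, and similarly in $w$: the fundamental domain is a $4\times4$ grid of your basic rectangle, i.e.\ $[0,4\tilde v_0]\times[0,4\tilde w_0]$ with opposite sides identified, accounting for all sixteen orthant pieces, not four.

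Second, your backup check via the parametrization $\beta$ does not work as stated: the level sets $\{w=\text{const}\}$ of $\beta$ are indeed tori, but they are \emph{not} the confocal slices $u=\lambda$. Substituting $\beta$ into $Q_\lambda=1$ leaves a residual dependence on the angular variables unless $a=b$ and $c=d$. The torus topology of $Q_\lambda\cap Q_2$ is therefore carried by your gluing argument (once the period is fixed), not by this comparison.
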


\begin{proof}  
For $\lambda \in (c^2,\infty)$  the quadratic form $Q_\lambda(x,y,z,t)$ has signature 0 $(++++)$, so it follows that $Q_\lambda$ has only one connected component diffeomorphic to a torus.

    The principal equivalence stated follows from the fact that  $Q_\lambda$  has  conformal principal charts that can be obtained as in the proof of Lemma \ref{lem:q0r3conforme}. See also  \cite{bsbm14}.
\end{proof}

   \begin{lemma}\label{lem:BQ2R4}
    Let $\lambda \in (d^2,c^2)$ and consider   the intersection of the quadric
    $$Q_\lambda(x,y,z,t)=\frac{x^2}{a^2+\lambda}+\frac{y^2}{b^2+\lambda}-
    \frac{z^2}{c^2-\lambda}-\frac{t^2}{d^2-\lambda}=1,\;\; a>b>c>d>0,$$
with the quadric $  Q_2=Q_0^{-1}(0)$.
Let $  Q_\lambda= Q_\lambda^{-1}(0)\cap  Q_2$.
        Then $  Q_\lambda$ has two connected components, both are diffeomorphic to a hyperboloid of one sheet with three different axes  and there exists a  principal parametrization of $ Q_\lambda$  such that
       the principal lines are the coordinates curves. Therefore, each connected component of  $  Q_\lambda$ is principally equivalent to a hyperboloid of one sheet.

  \end{lemma}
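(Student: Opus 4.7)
The proof will parallel that of Lemma \ref{lem:BQ1R4}. I would first replace $Q_\lambda$ by the difference quadric $C_\lambda := Q_\lambda - Q_2$, whose zero set has the same intersection with $Q_2$, and which is given explicitly by
\[
C_\lambda = \Bigl(\tfrac{1}{a^2+\lambda}-\tfrac{1}{a^2}\Bigr)x^2 + \Bigl(\tfrac{1}{b^2+\lambda}-\tfrac{1}{b^2}\Bigr)y^2 - \Bigl(\tfrac{1}{c^2-\lambda}-\tfrac{1}{c^2}\Bigr)z^2 - \Bigl(\tfrac{1}{d^2-\lambda}-\tfrac{1}{d^2}\Bigr)t^2.
\]
For $\lambda \in (d^2, c^2)$ a direct sign check gives coefficients with signs $(-,-,-,+)$, so $C_\lambda$ is non-degenerate of Morse signature $1$ (type $(+++-)$ after a global sign flip). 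Its zero locus is therefore a double cone in $\mathbb R^4$ whose two nappes are separated by the hyperplane $t=0$. Since the restriction $C_\lambda|_{\{t=0\}}$ is definite and $Q_2$ avoids the origin, one has $Q_\lambda \cap Q_2 \cap \{t=0\}=\emptyset$, so $Q_\lambda \cap Q_2$ splits into (at least) two components, one in $\{t>0\}$ and one in $\{t<0\}$.

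Next, I would invoke Proposition \ref{prop:AQ2R4} with the principal coordinate $v$ frozen at $\lambda$: the reduced map $\psi_\lambda(u,w):=\psi(u,\lambda,w)$, defined on $(u,w) \in (c^2,\infty) \times (-a^2,-b^2)$, parametrizes a positive-orthant piece of $Q_\lambda \cap Q_2$ whose coordinate lines are principal lines of $Q_2$. Setting $v=\lambda$ in the fundamental forms \eqref{eq:gijQ2} and \eqref{eq:bijQ2} yields diagonal expressions of exactly the structural type met in Proposition \ref{prop:q1folha} for the hyperboloid of one sheet in $\mathbb R^3$, with conformal factor $(u-w)$. Extending $\psi_\lambda$ across the three invariant coordinate hyperplanes $x=0$, $y=0$, $z=0$ by the sign symmetries of $Q_2$ produces a single connected component within each half-space $\{t>0\}$, $\{t<0\}$, matching the count supplied by the signature analysis.

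Finally, mimicking the reparametrization used in the proof of Proposition \ref{prop:q1folha}, I would absorb the non-conformal factors in \eqref{eq:gijQ2} through substitutions of the form $d\tau_1 = \sqrt{u(u-\lambda)/(2\xi_2(u))}\,du$ and $d\tau_3 = \sqrt{w(\lambda-w)/(2\xi_2(w))}\,dw$ (both integrands being positive throughout their respective domains, by the sign analysis already verified for $\xi_2$), producing a global conformal principal chart on each component. The resulting conformal atlas then delivers the claimed principal homeomorphism onto a hyperboloid of one sheet in $\mathbb R^3$, preserving both foliations. The main obstacle I foresee lies in verifying that the gluing of the eight orthant pieces within each half-space $\{t>0\}$, $\{t<0\}$ closes up cleanly into one $\mathbb S^1 \times \mathbb R$-component (rather than splitting further), and that the conformal parameters $\tau_1, \tau_3$ extend smoothly across the invariant coordinate hyperplanes; this is the direct analogue of the gluing step carried out in Lemma \ref{lem:q0r3conforme} and Proposition \ref{prop:q1folha}, and is secured by the signature-$1$ computation above.
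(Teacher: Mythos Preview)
Your proposal is correct and follows essentially the same route as the paper, which simply declares the proof ``Similar to the proof of Lemma \ref{lem:BQ1R4}.'' You have correctly filled in the details that the analogy entails: the difference quadric $C_\lambda$ has signs $(-,-,-,+)$ for $\lambda\in(d^2,c^2)$, the two components are separated by $\{t=0\}$ (rather than $\{z=0\}$ as in the $Q_1$ case), the frozen-$v$ map $\psi_\lambda(u,w)=\psi(u,\lambda,w)$ on $(c^2,\infty)\times(-a^2,-b^2)$ gives the principal parametrization, and the conformal substitutions you write down have the correct signs. The gluing across $x=0$, $y=0$, $z=0$ that you flag as the residual obstacle is exactly the step handled by the symmetry-extension argument of Lemma \ref{lem:q0r3conforme} and Proposition \ref{prop:q1folha}, as you note.
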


\begin{proof} Similar to the proof  of lemma \ref{lem:BQ1R4}
\end{proof}

   \begin{lemma}\label{lem:DQ2R4}
    Let $\lambda \in (-a^2,-b^2)$ and consider   the intersection of the quadric
    $$Q_\lambda(x,y,z,t)=\frac{x^2}{a^2+\lambda}+\frac{y^2}{b^2+\lambda}-
    \frac{z^2}{c^2-\lambda}-\frac{w^2}{d^2-\lambda}=1,\;\; a>b>c>d>0,$$
with the quadric $  Q_2=Q_0^{-1}(0)$.
 Let $ Q_\lambda= Q_\lambda^{-1}(0)\cap  Q_2$.
        Then $  Q_\lambda$ has four connected components and each one diffeomorphic to a leaf of a hyperboloid of two  sheets with three different axes  and there exists a  principal parametrization of $  Q_\lambda$  such that
       the principal lines are the coordinates curves. Therefore, each connected component of  $  Q_\lambda$ is principally equivalent to a leaf of a  hyperboloid of two sheets.
 \end{lemma}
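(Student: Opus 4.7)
The plan is to adapt the argument of the preceding two lemmas for $Q_1$ (specifically Lemmas \ref{lem:BQ1R4} and \ref{lem:DQ1R4}) to the present situation, with the extra bookkeeping that the number of connected components and the topology of each jump from those of Lemma \ref{lem:BQ2R4}. First, following the same recipe as in Lemma \ref{lem:CQ1R4}, I would replace the intersection $Q_\lambda\cap Q_2$ with $C_\lambda\cap Q_2$, where $C_\lambda=Q_\lambda-Q_2$ is the quadratic form with coefficients $\bigl(\tfrac{-\lambda}{a^2(a^2+\lambda)},\tfrac{-\lambda}{b^2(b^2+\lambda)},\tfrac{-\lambda}{c^2(c^2-\lambda)},\tfrac{-\lambda}{d^2(d^2-\lambda)}\bigr)$ multiplying $x^2,y^2,z^2,t^2$. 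For $\lambda\in(-a^2,-b^2)$ the signs of the four coefficients are $(+,-,+,+)$, i.e.~Morse signature $1$, so $C_\lambda^{-1}(0)$ is a quadratic cone topologically equivalent to a single double-napped null cone in $\mathbb{R}^4$.

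Second, I would exhibit a principal parametrization. Fixing $w=\lambda$ in the chart of Proposition \ref{prop:AQ2R4} yields $\psi_\lambda(u,v)=\psi(u,v,\lambda)$, defined on $(c^2,\infty)\times(d^2,c^2)$, and mapping into $Q_\lambda\cap Q_2\cap\{xyzt\ne 0\}$ inside the positive orthant. Since the first and second fundamental forms \eqref{eq:gijQ2}, \eqref{eq:bijQ2} of $Q_2$ are diagonal in $(u,v,w)$, their restrictions to $\{w=\lambda\}$ remain diagonal in $(u,v)$; hence the $u$- and $v$-coordinate curves are principal lines of the embedded surface $Q_\lambda\cap Q_2$ as well. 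A conformal reparametrization is then produced as in Lemma \ref{lem:q0r3conforme} by the changes of variable $d\tau_1=\sqrt{u/(2\,\xi_2(u))}\,du$ and $d\tau_2=\sqrt{-v/(2\,\xi_2(v))}\,dv$, with $\xi_2$ taken from \eqref{eq:gijQ2}.

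Third, I would count the components. Reading off \eqref{eq:AQ2R4} with $w=\lambda\in(-a^2,-b^2)$, all the factors $a^2+u$, $a^2+v$, $a^2+\lambda$, $b^2+u$, $b^2+v$, $b^2+\lambda$ keep a fixed sign on the closure of the rectangle $[c^2,\infty)\times[d^2,c^2]$, so $x$ and $y$ never vanish along the surface; on the other hand $z$ vanishes exactly on the edges $u=c^2$ and $v=c^2$, and $t$ vanishes exactly on $v=d^2$. Thus $Q_\lambda\cap Q_2$ crosses the hyperplanes $z=0$ and $t=0$ transversely but is confined to one of the four regions defined by the signs of $x$ and $y$, giving $2\times 2=4$ connected components. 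Gluing the eight orthant pieces along $z=0$ and $t=0$ by the reflection symmetries of $Q_2$, each component assembles into a single surface parametrized by a full plane and conformally equivalent to the parametrization of a leaf of the two-sheeted hyperboloid of Proposition \ref{prop:q2folhas}. The principal equivalence then follows from that conformal identification exactly as in Lemma \ref{lem:CQ1R4}.

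The main obstacle I anticipate is the component count and the identification of each component with the single leaf of a two-sheeted hyperboloid rather than with, say, a one-sheeted hyperboloid or a disk-bundle over $S^1$. This amounts to verifying that the reflective gluings across the boundaries $u=c^2$, $v=c^2$, $v=d^2$ of the parametrization rectangle do extend $\psi_\lambda$ to a single chart with parameter domain $\mathbb R^2$ and no periodic identifications, while the boundaries of the larger rectangle in $(\tau_1,\tau_2)$-coordinates are sent to infinity rather than closing up. Once this is in place the rest is, as in Lemmas \ref{lem:BQ1R4}--\ref{lem:BQ2R4}, routine.
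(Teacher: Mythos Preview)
Your proposal is correct and follows the same approach as the paper. The paper's own proof is two sentences: it says ``Similar to the proof of Lemma \ref{lem:DQ1R4}'' and then records that $\psi_\lambda(u,v)=\psi(u,v,\lambda)$ on $(c^2,\infty)\times(d^2,c^2)$ parametrizes $Q_\lambda$ in the region $\{x>0,\,y>0\}$ by principal curvature lines --- exactly the chart you fix, and exactly the region your sign analysis of \eqref{eq:AQ2R4} isolates (so that the four components are indexed by the signs of $x$ and $y$). Your added detail on the signature of $C_\lambda$ and on the reflective gluing across $z=0$ and $t=0$ simply spells out what the paper leaves to the reader by pointing back to Lemmas \ref{lem:DQ1R4} and \ref{lem:q0r3conforme} and to Proposition \ref{prop:q2folhas}.
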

 \begin{proof} Similar to the proof of Lemma \ref{lem:DQ1R4}.
 From propositions \ref{prop:q2folhas} and \ref{eq:AQ2R4} it follows  
  that $\psi_\lambda(u,v)=\psi( u,v, \lambda)$, $\psi:(c^2,\infty)\times (d^2,c^2)\to Q_\lambda$ is a parametrization of $Q_\lambda$
           in the region $Q_\lambda
           \cap \{(x,y,z,t),   x>0, y>0)\}$ by principal curvature lines. 
 
 \end{proof}

     The results above are summarized in the following theorem.
     
     \begin{theorem} \label{th:Q2R4}
      The umbilic set  of    $Q_2$ in equation ( \ref{eq:Q2}) is empty and its partially umbilic set  consists of two curves
            ${\mathcal P}_{23}^1, \; {\mathcal P}_{23}^2$.
            
      Consider the parametrization  of $\ Q_2$ below.
      \begin{equation}
      \alpha_\pm(u,v,w)=\left(au,bv,cw, d \sqrt {{u}^{2}+{v}^{2}-{w}^{2}-1} \right).
      \end{equation}
      In this parametrization the partially umbilic set is given by:
      
      $$E(u,v)={\frac { \left( {d}^{2}+{a}^{2} \right) {u}^{2}}{{c}^{2}+{a}^{2}}}+{
      \frac { \left( {b}^{2}+{d}^{2} \right) {v}^{2}}{{b}^{2}+{c}^{2}}}-1=0,\;\; w=0.$$

      The partially umbilic curves ${\mathcal P}_{23}^1$  and  ${\mathcal P}_{23}^2$  
      are of type  $D_1$, its  partially umbilic 
      separatrix surfaces 
        span  open  bands  ${ W_{23}}^1 $
      such that $\partial{ W_{23}}^1=  {\mathcal P}_{23}^1\cup  {\mathcal P}_{23}^2$.
      
        \noindent i)  \; All leaves of the  principal foliation  ${\mathcal F}_3 $, outside the umbilic separatrix surface, are closed.

      \noindent   
        \noindent ii)  \; All leaves of the  principal foliation  ${\mathcal F}_2 $
          are open arcs and diffeomorphic to  $\mathbb R.\;$
      
      \noindent iii)   All the leaves of    the principal foliations ${\mathcal F}_1$   are compact.
       See illustration in Fig. \ref{fig:Q2global}.

    \end{theorem}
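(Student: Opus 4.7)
The plan is to derive Theorem \ref{th:Q2R4} by assembling the local principal-curvature formulas of Proposition \ref{prop:AQ2R4}, the identification of the partially umbilic locus in Lemma \ref{lem:pumbq2}, and the global leaf structure furnished by the three confocal-family lemmas \ref{lem:CQ2R4}, \ref{lem:BQ2R4}, \ref{lem:DQ2R4}. First I would note that the strict inequalities $k_1(p) < 0 < k_2(p) \leq k_3(p)$ coming from \eqref{eq:AQ2p} rule out umbilic points and force $\mathcal{P}_{12} = \emptyset$, so the only partially umbilic locus is $\mathcal{P}_{23}$. The equation $k_2 = k_3$ is equivalent to $u = v = c^2$, and Lemma \ref{lem:pumbq2} already exhibits the corresponding set as two disjoint closed curves lying in the hyperplane $z = 0$, one in each half-space $t > 0$ and $t < 0$, described in $\alpha_\pm$ by $E(u,v) = 0$, $w = 0$. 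These are $\mathcal{P}_{23}^1$ and $\mathcal{P}_{23}^2$.

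Next, to obtain the global structure of $\mathcal{F}_1, \mathcal{F}_2, \mathcal{F}_3$, I would exploit the confocal system as a triply orthogonal decomposition of $Q_2$. Fixing $u = \lambda \in (c^2, \infty)$ foliates $Q_2$ by tori of revolution (Lemma \ref{lem:CQ2R4}), and $\mathcal{F}_1$ is the principal foliation tangent to the leaves of each such torus; each leaf is a closed curve, giving part (iii). Fixing $v = \lambda \in (d^2, c^2)$ foliates $Q_2$ by hyperboloids of one sheet (Lemma \ref{lem:BQ2R4}), with $\mathcal{F}_3$ their tangent principal foliation; Proposition \ref{prop:q1folha} then implies that, away from the partially umbilic separatrices, these leaves are closed curves, which is part (i). Fixing $w = \lambda \in (-a^2, -b^2)$ produces hyperboloids of two sheets (Lemma \ref{lem:DQ2R4}) whose tangent principal foliation is $\mathcal{F}_2$; Proposition \ref{prop:q2folhas} then forces every $\mathcal{F}_2$-leaf to be an open arc diffeomorphic to $\mathbb{R}$, which is part (ii).

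The remaining task is the transversal analysis at $\mathcal{P}_{23}^i$. The $D_1$ type should follow from the simple vanishing of $k_3 - k_2$ along the curve together with a first-order expansion of $\mathcal{L}_2$ and $\mathcal{L}_3$ in the $\alpha_\pm$ chart at $w = 0$, $E(u,v) = 0$; the algebra is a direct transcription of the computation performed in \cite{bsbm14} for the ellipsoid and should yield the same normal-form discriminant. The main obstacle I foresee is the global construction of the partially umbilic separatrix surface $W_{23}^1$ with boundary $\mathcal{P}_{23}^1 \cup \mathcal{P}_{23}^2$. The plan is to define $W_{23}^1$ as the union of the $\mathcal{F}_2$-separatrix arcs springing from $\mathcal{P}_{23}^1$ and, using Lemma \ref{lem:DQ2R4} to propagate each such arc across the four components of $Q_\lambda$ for $\lambda \in (-a^2,-b^2)$, to verify that every such arc terminates on $\mathcal{P}_{23}^2$ without recurrence or branching. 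Controlling this global matching, and showing that the resulting surface is a smooth embedded band connecting the two components of $\mathcal{P}_{23}$ rather than twisting into a cylinder, is the delicate step.
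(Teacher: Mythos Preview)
Your overall strategy matches the paper's: Theorem \ref{th:Q2R4} is presented there simply as a synthesis of Proposition \ref{prop:AQ2R4}, Lemma \ref{lem:pumbq2}, and the three confocal lemmas \ref{lem:CQ2R4}--\ref{lem:DQ2R4}. However, you have mis-identified which principal foliation is tangent to which confocal family, and this breaks your argument for part (i).

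In the principal chart of Proposition \ref{prop:AQ2R4} one reads off $\mathcal{L}_1 = \langle \partial/\partial w\rangle$, $\mathcal{L}_2 = \langle \partial/\partial u\rangle$, $\mathcal{L}_3 = \langle \partial/\partial v\rangle$ (since in the proof $k_1 = n = b_{33}/g_{33}$, $k_2 = l = b_{11}/g_{11}$, $k_3 = m = b_{22}/g_{22}$). Hence fixing $v = \lambda$ produces a surface tangent to $\mathcal{F}_1$ and $\mathcal{F}_2$, \emph{transverse} to $\mathcal{F}_3$; the hyperboloid-of-one-sheet slices of Lemma \ref{lem:BQ2R4} therefore say nothing about $\mathcal{F}_3$, and Proposition \ref{prop:q1folha} has no separatrices to invoke (its umbilic set is empty). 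The correct route to part (i) is to fix $w$ and use Lemma \ref{lem:DQ2R4}: those two-sheeted hyperboloid slices are tangent to $\mathcal{F}_2$ and $\mathcal{F}_3$, their umbilic points are exactly the intersections with $\mathcal{P}_{23}^1 \cup \mathcal{P}_{23}^2$, and Proposition \ref{prop:q2folhas} gives closed $\mathcal{F}_3$-leaves off the umbilic separatrices. (Alternatively fix $u$ and use the tori of Lemma \ref{lem:CQ2R4}, which are tangent to both $\mathcal{F}_1$ and $\mathcal{F}_3$.)

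Your arguments for (ii) and (iii) do survive, since $\mathcal{F}_2$ is indeed tangent to the $w$-slices and $\mathcal{F}_1$ to the $u$-slices; just note that each confocal slice is tangent to \emph{two} of the three foliations, not one. Once the correspondence is fixed, the $D_1$ type and the band $W_{23}^1$ come essentially for free from the $w$-slices: $W_{23}^1$ is the union over $w\in(-a^2,-b^2)$ of the umbilic separatrix arcs of Proposition \ref{prop:q2folhas}, so the ``global matching'' you flag as delicate is in fact automatic.
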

  \begin{figure}[ht]
  \begin{center}
   \def\svgwidth{0.60\textwidth}
    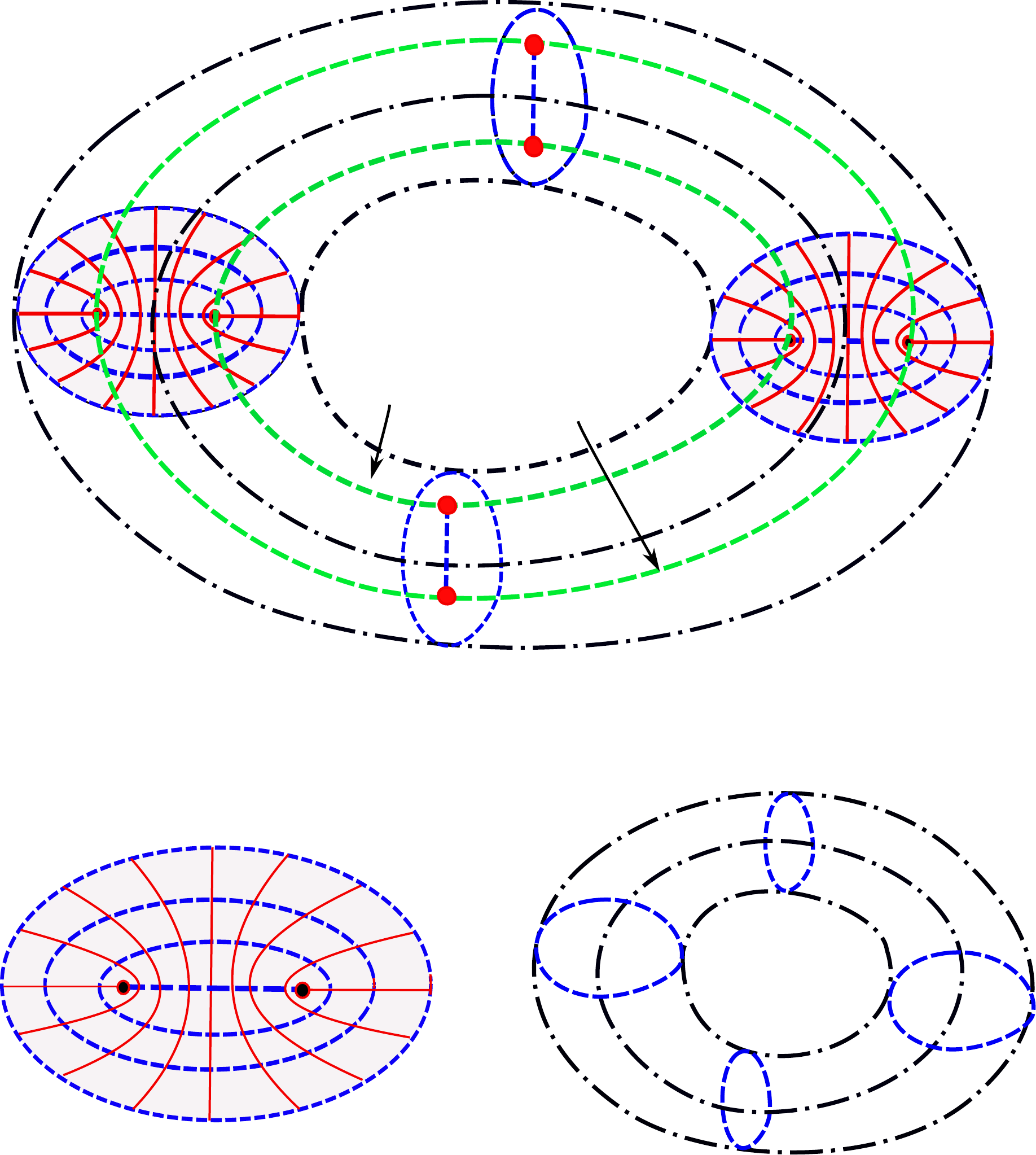
  \caption{Principal lines of   the quartic $Q_2$ of index 2. \label{fig:Q2global} }
  \end{center}
  \end{figure}

 \section{
 Ellipsoidal Hyperboloid with two sheets in $\mathbb R^4$
} \label{ss:q3abcd}
 
    \noindent In this section will be established  the global behavior of principal lines in the quadric 
    $ Q_3$
     of signature 3 $(+ -  - - ) $.

 \begin{proposition}\label{prop:AQ3R4}
 The quadric  
 given by
 \begin{equation} \label{eq:Q3}
  Q_3(x,y,z,t)=\dfrac{x^2}{a^2 } -\dfrac{ y^2}{b^2 } -\dfrac{
 z^2}{c^2 } - \dfrac{t^2}{d^2 } - 1=0, \;\;\; a>0, \; b>c>d>0,
 \end{equation} 
 has two connected components, each one is   diffeomorphic to $  \mathbb R^3$ and has
 sixteen  principal charts $(u,v,w)=\varphi_i(x,y,z,t)$, $u>v>w$,  where
 $$\varphi_i^{-1}:   (b^2,\infty)  \times (c^2,b^2) \times (d^2, c^2) \to 
 \{(x,y,z,t): xyzt\ne 0\}\cap Q_3^{-1}(0) $$
  is defined by equation
 \eqref{eq:AQ3R4}.

 \begin{equation}\label{eq:AQ3R4} \aligned x^2 &= {\frac { a^2\left( {a}^{2}+w \right)  \left( {a}^{2}+v \right)  \left( {a
 }^{2}+u \right) }{ \left( {d}^{2}+{a}^{2} \right)  \left( {c}^{
 2}+{a}^{2} \right)  \left( {b}^{2}+{a}^{2} \right) }}
 ,\;
  y^2 = -{\frac {{b}^{2} \left( {b}^{2}-w \right)  \left( {b}^{2}-v \right) 
  \left( {b}^{2}-u \right) }{ \left( {b}^{2}-{d}^{2} \right)  \left( {
 b}^{2}+{a}^{2} \right)  \left( b^2-{c}^{2} \right) }}
 \\
 z^2 &={\frac {{c}^{2} \left( {c}^{2}-w \right)  \left( {c}^{2}-v \right) 
  \left( {c}^{2}-u \right) }{ \left( -{d}^{2}+{c}^{2} \right)  \left( -
 {c}^{2}+{b}^{2} \right)  \left( {c}^{2}+{a}^{2} \right) }}
 ,\;
  t^2 =-{\frac {{d}^{2} \left( {d}^{2}-w \right)  \left( {d}^{2}-v \right) 
   \left( {d}^{2}-u \right) }{ \left( {d}^{2}+{a}^{2} \right)  \left( -{
  d}^{2}+{c}^{2} \right)  \left( -{d}^{2}+{b}^{2} \right) }} .
 \endaligned \end{equation}

 For all $p\in \{(x,y,z,t): xyzt \ne
 0\}\cap Q_ 3^{-1}(0)$ and $Q_3=Q_ 3^{-1}(0)$ positively oriented, the principal curvatures satisfy
 $0<k_1(p)\leq<k_2(p)\leq k_3(p)$ and in the chart $(u,v,w)$ are given by:
 
 \begin{equation}\label{eq:AQ3p}
 k_1(u,v,w)={\frac {abcd}{u\sqrt {uvw}}},\; k_2(u,v,w)={\frac {abcd}{u\sqrt {uvw}}},\;
 k_3(u,v,w)={\frac {abcd}{w\sqrt {uvw}}}.
 \end{equation}
 
  The partially umbilic set ${\mathcal P}_{12}=\{p: k_1(p)=k_2(p)\}$ is the union of two  closed curves contained in the hyperplane $y=0$, one contained in the region  $\{x>a\}$ and the other two are contained in the region$\{x<-a\}$  and ${\mathcal P}_{23}=\{p: k_2(p)=k_3(p)\}$ is the union of  four open 
  arcs 
  contained in the hyperplane $z=0$,  two are contained in the region $\{x>a\}$ and the other two are contained in the region$\{x<-a\}$.
  
 \end{proposition}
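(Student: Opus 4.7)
The plan is to follow the template of the proofs of Propositions \ref{prop:AQ1R4} and \ref{prop:AQ2R4}, adapting the sign conventions to the signature $(+---)$ of $Q_3$. First I would introduce the associated one-parameter confocal family
$$Q(p,\lambda) = \frac{x^2}{a^2+\lambda} - \frac{y^2}{b^2-\lambda} - \frac{z^2}{c^2-\lambda} - \frac{t^2}{d^2-\lambda} - 1.$$
For $p = (x,y,z,t) \in Q_3 \cap \{xyzt \ne 0\}$, the equations $Q(p,u) = Q(p,v) = Q(p,w) = Q(p,0) = 0$ form a Vandermonde-type linear system in $(x^2,y^2,z^2,t^2)$. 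Cramer's rule yields the formulas \eqref{eq:AQ3R4}. A sign analysis of the four resulting numerators (each must be non-negative) forces $u > b^2 > v > c^2 > w > d^2$, so the parameters range in $(b^2,\infty) \times (c^2,b^2) \times (d^2,c^2)$; each of the sixteen orthants of $\{xyzt \ne 0\}$ is covered by one branch $\psi = \varphi_i^{-1}$.

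Next I would compute the first and second fundamental forms by direct differentiation of $\psi$. By the standard telescoping identities for confocal quadrics, the off-diagonal terms $g_{ij}$ and $b_{ij}$ with $i \ne j$ vanish, so $(u,v,w)$ is automatically a principal chart. The diagonal entries have the familiar shape
$$g_{ii} = \pm\frac{1}{4}\,\frac{\lambda_i\prod_{j\ne i}(\lambda_i-\lambda_j)}{\xi_3(\lambda_i)},\qquad \xi_3(\lambda) = (a^2+\lambda)(b^2-\lambda)(c^2-\lambda)(d^2-\lambda),$$
with $\lambda_1 = u,\,\lambda_2 = v,\,\lambda_3 = w$, and similarly for $b_{ii}$ with prefactor $abcd/\sqrt{uvw}$. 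Taking the ratios $k_i = b_{ii}/g_{ii}$ produces formula \eqref{eq:AQ3p}. Since $u > v > w > 0$ throughout the domain, all three curvatures are positive, matching the stated orientation.

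The partial umbilic loci are then read directly from \eqref{eq:AQ3p}: $k_1 = k_2$ happens exactly when $u = v = b^2$, and $k_2 = k_3$ when $v = w = c^2$. Substituting $u = v = b^2$ into \eqref{eq:AQ3R4} forces $y^2 \equiv 0$, and the remaining relations define a one-parameter family of points whose intersection with each sheet $\{x > a\}$ and $\{x < -a\}$ is a closed curve; this gives the two components of $\mathcal P_{12}$. Substituting $v = w = c^2$ forces $z = 0$ and produces a non-compact section which, cut by each sheet, splits into two unbounded arcs, so four in total, yielding $\mathcal P_{23}$. The diffeomorphism of each component of $Q_3$ with $\mathbb R^3$ follows from the explicit parametrization $x = \varepsilon a\cosh\rho$, $\varepsilon = \pm 1$, with $(y,z,t) = \sinh\rho \cdot \sigma(\theta_1,\theta_2)$ and $\sigma$ ranging over the ellipsoid $y^2/b^2 + z^2/c^2 + t^2/d^2 = 1$, giving a radial product structure.

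The main obstacle is the sign bookkeeping: the three minus signs in $Q_3$ propagate through the Cramer determinants and the differentiation of $\psi$, and must be tracked carefully to identify the correct interval for each parameter, the correct sign of each $g_{ii}$ and $b_{ii}$, and ultimately the correct ordering $0 < k_1 \le k_2 \le k_3$. Once this is settled, the algebraic identities intrinsic to the confocal family yield the structural statements mechanically, and the two-dimensional results of Section \ref{ss:qr3} pin down the topology of the partial umbilic curves with no further effort.
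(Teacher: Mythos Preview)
Your proposal is correct and follows essentially the same route as the paper: introduce the confocal family, solve the resulting linear system in $(x^2,y^2,z^2,t^2)$ to obtain \eqref{eq:AQ3R4}, compute the diagonal first and second fundamental forms via $\xi_3$, read off the principal curvatures, and locate the partially umbilic sets at the boundary values $u=v=b^2$ and $v=w=c^2$. You actually supply two details the paper's own proof omits, namely an explicit argument for the diffeomorphism of each sheet with $\mathbb R^3$ and a direct check of the compact/non-compact dichotomy for $\mathcal P_{12}$ versus $\mathcal P_{23}$; the paper defers the latter to the subsequent lemma.
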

 \begin{proof}
Let 
 $$ Q_3(p,\lambda) =  \dfrac{x^2}{(a^2  -\lambda)} -\dfrac{ y^2}{(b^2 -\lambda)} -\dfrac{
  z^2}{(c^2  -\lambda)}-  \dfrac{t^2}{(d^2  -\lambda)}.  $$
  
  and consider the system $Q_3(p,u)=Q_3(v,p)=Q_3(p,w)=Q_3(p,0)=0.$
  
 As in the proof of  Proposition \ref{prop:AQ1R4} it follows that the solution of the linear system above in the variables $x^2$, $y^2$, $z^2$ and $w^2$ is given by equation \eqref{eq:AQ3R4}.
  
  So 
 the map $\varphi: Q_3\cap \{(x,y,z,t): xyzt\ne 0\} \to (b^2,\infty)  \times (c^2,b^2) \times (d^2, c^2)  $ 
 is well defined.

 The map 
 $$\varphi: Q_3 \cap \{(x,y,z,t): xyzt\ne 0\}\to  (b^2,\infty)  \times (c^2,b^2) \times (d^2, c^2) , \; $$
 $\varphi (x,y,z,t) = ( u,v,w) $ is a regular covering
 which defines a chart
 in each orthant of the quadric  $ Q_3$.

 So, equations in 
  \eqref{eq:AQ3R4}   define  parametrizations $\psi(u,v,w)=(x,y,z,t) $ of the  connected  components of the region $Q_3  \cap \{(x,y,z,t): xyzt\ne 0\}$.
  By symmetry, it is sufficient to consider only the positive
  orthant 
  $\{(x,y,z,t): x> 0,\; y>0,\; z>0,\; t>0\}$.
 
 Consider the parametrization $ \psi(u,v,w)=\varphi^{-1}(u,v,w)=(x,y,z,t)$,
 with $(x,y,z,t)$ in the positive orthant.  The fundamental forms of
 $ Q_3$ will be evaluated and expressed in terms of the function
 
 \begin{equation}\label{eq:xiq3} \xi_3 (\lambda ) = ( a^2  +
 \lambda ) ( b^2 - \lambda )( c^2 - \lambda )( d^2  - \lambda
 ).\;\end{equation}

 Evaluating $g_{11} = (x_u)^2 + (y_u)^2 + (z_u)^2 + (w_u)^2 $,
 $g_{22} = (x_v)^2 + (y_v)^2 + (z_v)^2 + (w_v)^2 $ and
 $g_{33}= (x_w)^2 + (y_w)^2 + (z_w)^2 + (t_w)^2 $  and observing that
 $g_{ij} = 0,~i\not= j$,  it follows that the first fundamental form is given by:

 \begin{equation}\label{eq:gijQ3} I = \frac 14\,{\frac {u \left( u-w \right)  \left( u-v \right) }{ \xi_3(u) }}
  du^2  
 -\frac 14 \,{\frac {v \left( v-w \right)  \left( u-v \right) }{ \xi_3(v) }}
 dv^2 +  \frac 14\,{\frac {w \left( v-w \right)  \left( u-w \right) }{ \xi_3(w) }}
 dw^2  .\end{equation}

  Let    $$N=\frac{\nabla Q_3}{|\nabla Q_3|}=\frac{abcd}{\sqrt{uvw}} \left(\frac{x(u,v,w)}{a^2},- \frac{y(u,v,w)}{b^2},-\frac{z(u,v,w)}{c^2}, -\frac{t(u,v,w)}{d^2}\right).$$
 
 Similar and straightforward calculation shows that the second fundamental form with respect to $N$ is given by:
 {\small
 \begin{equation}\label{eq:bijQ3} II = \frac{abcd}{4\sqrt{uvw}}  \left[     \,{\frac {  \left( u-w \right)  \left( u-v \right) }{ \xi_3(u) }}
  du^2  
   \,{\frac {  \left( v-w \right)  \left( u-v \right) }{ \xi_3(v) }}
 dv^2 +   \,{\frac {  \left( v-w \right)  \left( u-w \right) }{ \xi_3(w) }}
 dw^2  \right] .\end{equation}
 }
 Therefore the  coordinate  lines are principal curvature lines and
   the principal curvatures
  $  b_{ ii} /g_{ii}i, \; (i =1,2,3)$ are given by:
 $$l = \frac 1u \left(\dfrac{abcd}{\sqrt{uvw } }\right),\quad
 m = \frac 1v \left(\dfrac{abcd}{\sqrt{ uvw } }\right),\quad
 n = \frac 1w \left(\dfrac{abcd}{\sqrt{uvw } }\right).$$
 Since $u \geq v \geq w>0 $ it follows that  $l = m $ if, and only
 if, $ u = v = b^2 $. Also   $n=m $ if and only if $v=w=c^2$.   Therefore, for  $p\in Q_3 \cap \{(x,y,z,t): xyzt\ne 0\}$
 it follows that the principal curvatures satisfy
 $l=k_1(p)<k_2(p)=m\leq k_3(p)=n$.
  \end{proof}
  
  \begin{lemma}
  \label{lem:AQ3R4}  The    partially umbilic set, contained in the hyperplane  $y=0$ given in Proposition \ref{prop:AQ1R4} is  contained in the two dimensional hyperboloid 
  with two sheets
  $\frac{x^2}{a^2} -\frac{z^2}{c^2}- \frac{t^2}{d^2} =1 $ and the partially umbilic set, contained in the hyperplane  $z=0$ given in Proposition \ref{prop:AQ1R4},  is  contained in the two dimensional hyperboloid of two leaves $\frac{x^2}{a^2} -\frac{y^2}{b^2}- \frac{t^2}{d^2} =1$.

   In the parametrization 
   $$\alpha(u,v,w)=(a\sqrt{(1+u^2+v^2+w^2} ),bu,cv,dw)$$
   with positive normal vector proportional to:   
   {\small  
   $$N=\left( bcd,-{\frac {a cd u}{\sqrt {{u}^{2}+{v}^{2}+{w}^{2}+1}}},-{\frac {a bdv
   }{\sqrt {{u}^{2}+{v}^{2}+{w}^{2}+1}}},-{\frac {a bcw}{\sqrt {{u}^{2}+{v
   }^{2}+{w}^{2}+1}}}\right)$$}
   
   the partially umbilic set is defined by the  conics 
   
   \begin{equation}\label{eq:puq3}
   \aligned
   E(v,w)=& {\frac { \left( {a}^{2}+{c}^{2} \right) {v}^{2}}{{b}^{2}-{c}^{2}}}+{
   \frac { \left( {a}^{2}+{d}^{2} \right) {w}^{2}}{{b}^{2}-{d}^{2}}}-1,\;\; u=0\\
  H(u,w)=&-{\frac { \left( {a}^{2}+{b}^{2} \right) {u}^{2}}{{b}^{2}-{c}^{2}}}+{
  \frac { \left( {a}^{2}+{d}^{2} \right) {w}^{2}}{{c}^{2}-{d}^{2}}}-1,
  \;\; v=0\endaligned 
   \end{equation}
 
   The ellipse is of type ${\mathcal P}_{23}=\{p: 0< k_1(p)< k_2(p)=k_3(p)\}$ and the hyperbole is of type   ${\mathcal P}_{12}=\{p:0< k_1(p)=k_2(p)<k_3(p)\}.$

  All  partially umbilic curves are biregular and 
  have    torsion zero  only at  the points of intersection with coordinate planes.

  \end{lemma}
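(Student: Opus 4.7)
The plan is first to verify the inclusion of each partially umbilic branch in the corresponding two-dimensional hyperboloid using the principal chart of equation \eqref{eq:AQ3R4}, then to translate this information into the $\alpha$-chart to obtain the explicit conic equations, and finally to analyse biregularity and torsion of the resulting curves.

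For the first step, Proposition \ref{prop:AQ3R4} already characterises the partially umbilic sets in the principal chart by $u=v=b^2$ and by $v=w=c^2$. Inspecting \eqref{eq:AQ3R4}, the formula for $y^2$ contains the factor $(b^2-u)(b^2-v)$ and the formula for $z^2$ contains the factor $(c^2-v)(c^2-u)$, so one branch lies in $y=0$ and the other in $z=0$, as asserted. Substituting $u=v=b^2$ into the expressions for $x^2,z^2,t^2$ yields rational functions of the free parameter $w\in(d^2,c^2)$; after cancelling the squared factors $(b^2-w)^{?}$ etc., a short algebraic expansion shows that
\[
\frac{x^2}{a^2}-\frac{z^2}{c^2}-\frac{t^2}{d^2}
\]
collapses to a telescoping sum equal to $1$, placing the branch on the hyperboloid $x^2/a^2-z^2/c^2-t^2/d^2=1$. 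The analogous computation with $v=w=c^2$ handles the other branch and the hyperboloid $x^2/a^2-y^2/b^2-t^2/d^2=1$.

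For the second step, the $\alpha$-chart satisfies $y=bu$, $z=cv$, $t=dw$, so the conditions $y=0$ and $z=0$ translate to $u=0$ and $v=0$ respectively. On each branch one reads off the $\alpha$-coordinates directly from the explicit rational expressions computed in the first step; plugging these into the candidate conic $E(v,w)$ (respectively $H(u,w)$) produces a sum in which the numerator factors cancel the denominators of $E$ (respectively $H$) and telescope to $0$. This proves the stated conic equations. Conversely, one argues by dimension: the partially umbilic set is one-dimensional and already contained in a single irreducible conic of the ambient hyperplane, so the computed equality is the whole set. The type identification is then immediate from the defining conditions $u=v=b^2$ versus $v=w=c^2$ together with \eqref{eq:AQ3p}.

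For the last step, each conic lies inside a three-dimensional coordinate hyperplane of $\mathbb R^4$, so it is a space curve in $\mathbb R^3$ whose Frenet curvature and torsion are defined. Parametrise the ellipse by $(v(\theta),w(\theta))=(v_0\cos\theta,w_0\sin\theta)$ with $v_0,w_0>0$ determined by the coefficients of $E$, and insert into $\alpha$ to obtain an explicit embedding $\gamma(\theta)$; biregularity amounts to $\gamma'\wedge\gamma''\neq 0$, which holds at every point because the coefficients of $v^2$ and $w^2$ in $E$ are both strictly positive, so $\gamma$ is a nondegenerate conic lifted by a strictly convex function $x=a\sqrt{1+v^2+w^2}$. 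For the torsion I compute the scalar triple product $\det(\gamma',\gamma'',\gamma''')$, factor out the common square-root denominators arising from differentiating $x=a\sqrt{1+v^2+w^2}$, and show that the remaining polynomial numerator splits as $\sin\theta\cos\theta$ times a factor that never vanishes on the parameter range. This forces torsion to vanish exactly at the four points $\sin\theta\cos\theta=0$, i.e.\ precisely where the ellipse meets the coordinate planes. An entirely analogous calculation with a hyperbolic parametrisation $(u(s),w(s))=(u_0\sinh s,w_0\cosh s)$ handles $H$, with the corresponding factor becoming $\sinh s\cosh s$, which vanishes only at $s=0$, the unique intersection of the hyperbola with a coordinate plane inside its branch.

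The main obstacle is the torsion computation: the expansion of the $3\times 3$ Frenet determinant is the one genuinely long calculation, and one has to be careful with the derivatives of $x=a\sqrt{1+u^2+v^2+w^2}$ since they introduce nested square-root denominators that must all cancel before the clean trigonometric/hyperbolic factorisation isolating the coordinate-plane crossings becomes visible.
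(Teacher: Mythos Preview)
Your approach is essentially the same as the paper's, which is extremely terse here: it just reads off the conditions $u=v=b^2$ and $v=w=c^2$ from \eqref{eq:AQ3R4}--\eqref{eq:AQ3p}, asserts that the resulting curves are an ellipse and a hyperbola, and closes with ``direct analysis shows that each connected component is a biregular curve.'' Your proposal is a reasonable fleshing-out of that sketch.

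Two remarks. First, step~1 is over-engineered: since every point of the partially umbilic set lies on $Q_3$, setting $y=0$ (resp.\ $z=0$) in $Q_3(x,y,z,t)=0$ immediately yields the hyperboloid equation $\tfrac{x^2}{a^2}-\tfrac{z^2}{c^2}-\tfrac{t^2}{d^2}=1$ (resp.\ the other one) with no telescoping computation needed. Second, for step~2 the paper's more detailed analogue (Lemma~\ref{lem:pumbq2} for $Q_2$) takes a slightly different route: it computes the first and second fundamental forms $g_{ij},b_{ij}$ directly in the $\alpha$-chart and then solves the partially umbilic conditions $(b_{11}g_{22}-b_{22}g_{11})=0$, $(g_{12}b_{22}-g_{22}b_{12})=0$ on the relevant hyperplane section to obtain the conic. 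Your route---parametrise the curve in the principal chart, convert to $\alpha$-coordinates via $(u,v,w)=(y/b,z/c,t/d)$, and verify the conic equation---is equally valid and arguably more transparent, but note that the paper's method gives the conic equations without needing the converse dimension argument you invoke. Your torsion analysis via the Frenet triple product is exactly what ``direct analysis'' is hiding; the symmetry observation that reflections in the coordinate planes reverse torsion already forces zeros at $\sin\theta\cos\theta=0$ (resp.\ $\sinh s=0$), so only the ``no other zeros'' part genuinely needs the full expansion.
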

  
  \begin{proof} From equations \eqref{eq:AQ3R4} and \eqref{eq:AQ3p} it follows that $k_1(p)=k_2(p)$ is defined by $v=w=c^2$ and $u\in (b^2,\infty)$ and so this partially umbilic set is the union of four open arcs.
  
   Also  $k_2(p)=k_3(p)$ is defined by $u=v= b^2$ and $w\in (d^2,c^2)$  and therefore  this partially umbilic set is the union of two closed curves.

  From the symmetries of $ Q_3$ it follows that one connected component is contained in the region $\{ (x,y,z,t):\; x>0, y=0, z>0\}.$
  
  The curves $E(v,w)=u=0$ and $H(u,w)=v=0$ stated in the lemma  are, respectively, an ellipse and a hyperbole (two connected components).
  
  Direct analysis shows that each connected  component is a biregular curve.
  \end{proof}

    \begin{lemma}\label{lem:CQ3R4} Let $\lambda \in (b^2,\infty)$ and consider the quartic surface
  $  Q_\lambda= Q_\lambda^{-1}(0)\cap  Q_3$.
  
   Then $  Q_\lambda$ has two connected components, both are diffeomorphic to an ellipsoid with three different axes  and there exists a  principal parametrization of $  Q_\lambda$  such that
      the principal lines are the coordinate
      curves. Therefore, each connected component $   Q_{\lambda}^i, \; (i=1,2)$,  of  $  Q_\lambda$ is principally equivalent to  an ellipsoid of Monge (three different axes).

 \end{lemma}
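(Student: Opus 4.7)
The strategy mirrors the proofs of Lemmas \ref{lem:CQ1R4} and \ref{lem:CQ2R4}. First, I would observe that $Q_\lambda \cap Q_3$ coincides with $C_\lambda \cap Q_3$, where $C_\lambda$ is the quadratic cone obtained by subtracting the two defining equations,
\begin{equation*}
C_\lambda = \left(\tfrac{1}{a^2-\lambda}-\tfrac{1}{a^2}\right)x^2 - \left(\tfrac{1}{b^2-\lambda}-\tfrac{1}{b^2}\right)y^2 - \left(\tfrac{1}{c^2-\lambda}-\tfrac{1}{c^2}\right)z^2 - \left(\tfrac{1}{d^2-\lambda}-\tfrac{1}{d^2}\right)t^2.
\end{equation*}
For $\lambda \in (b^2,\infty)$ a routine inspection of the signs of these four coefficients shows that $C_\lambda$ is nondegenerate and that $\nabla C_\lambda$ and $\nabla Q_3$ are linearly independent on the intersection, so $C_\lambda\cap Q_3$ is a smooth $2$-surface. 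Since $Q_3$ splits as $\{x>a\}\cup\{x<-a\}$ and $C_\lambda$ is invariant under $x\mapsto -x$, the intersection consists of exactly two connected components $Q_\lambda^1$ and $Q_\lambda^2$, one in each sheet of $Q_3$.

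Next, I would produce a principal parametrization of each $Q_\lambda^i$ by restricting the ambient chart of Proposition \ref{prop:AQ3R4}: define $\psi_\lambda(v,w) := \psi(\lambda,v,w)$ for $(v,w)\in (c^2,b^2)\times (d^2,c^2)$. Because the fundamental forms \eqref{eq:gijQ3} and \eqref{eq:bijQ3} are diagonal in $(u,v,w)$, they remain diagonal after freezing $u=\lambda$, so the $v$- and $w$-coordinate curves are simultaneously principal lines of $Q_3$ and of the restricted surface. The reflections $y,z,t\mapsto -y,-z,-t$ extend $\psi_\lambda$ across the coordinate hyperplanes to cover one full component; and since the partially umbilic loci of $Q_3$ identified in Lemma \ref{lem:AQ3R4} lie on different principal slices ($v=w=c^2$ or $u=v=b^2$), no umbilic point of $Q_3$ survives on $Q_\lambda^i$. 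A direct gluing of the four orthant patches, together with an Euler characteristic count, then shows that $Q_\lambda^i$ is diffeomorphic to $\mathbb{S}^2$ and carries a three-axis principal structure.

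Finally, to obtain the principal equivalence with the Monge ellipsoid $q_0$ of Proposition \ref{prop:qelip}, I would carry out the conformal change of variables of Lemma \ref{lem:q0r3conforme}. Writing $h_\lambda$ for the natural cubic obtained from \eqref{eq:gijQ3} after fixing $u=\lambda$, set $d\sigma_1 = \sqrt{-v/(2\,h_\lambda(v))}\,dv$ and $d\sigma_2 = \sqrt{w/(2\,h_\lambda(w))}\,dw$; both fundamental forms then become conformally flat on a rectangle, which is glued to itself through the symmetries of $Q_3$ to yield a homeomorphism $h_i\colon Q_\lambda^i \to q_0$ preserving both principal foliations. The main technical obstacle is verifying that the improper integrals $s_1, s_2$ defining the rectangle are finite, and checking smoothness of the gluing across the coordinate hyperplanes; both reduce, exactly as in Lemma \ref{lem:q0r3conforme}, to elementary estimates of $h_\lambda$ near the endpoints of $(c^2,b^2)$ and $(d^2,c^2)$.
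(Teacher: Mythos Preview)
Your proposal is correct and follows essentially the same route as the paper, whose proof is simply ``Similar to the proof of Lemma~\ref{lem:CQ1R4}'': identify $Q_\lambda\cap Q_3$ with the level set of a confocal quadric of signature $0$, restrict the principal chart of Proposition~\ref{prop:AQ3R4} by freezing $u=\lambda$, and invoke the conformal rectangle of Lemma~\ref{lem:q0r3conforme} for the principal equivalence. One small correction: the first coefficient in your $C_\lambda$ should read $\tfrac{1}{a^2+\lambda}-\tfrac{1}{a^2}$, consistent with the confocal family used in Lemmas~\ref{lem:BQ3R4} and~\ref{lem:DQ3R4} (the $a^2-\lambda$ in the proof of Proposition~\ref{prop:AQ3R4} is a typo); with this sign the signature argument goes through cleanly for all $\lambda>b^2$.
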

 
 \begin{proof} Similar to the proof of lemma \ref{lem:CQ1R4}.
 
 \end{proof}

    \begin{lemma}\label{lem:BQ3R4}
     Let $\lambda \in (c^2,b^2)$ and consider   the intersection of the quadric
     {\small  
     $$Q_\lambda(x,y,z,t)=\frac{x^2}{a^2+\lambda}-\frac{y^2}{b^2-\lambda}-
     \frac{z^2}{c^2-\lambda}-\frac{t^2}{d^2-\lambda}=1,\;\; a>0,\; b>c>d>0,$$
     }
 with the quadric $Q_0=Q_0^{-1}(0)$.
 Let $ Q_\lambda= Q_\lambda^{-1}(0)\cap  Q_3$.
       Then $ Q_\lambda$ has two connected components, both are diffeomorphic to 
       a
        hyperboloid of one sheet with three different axes  and there exists a  principal parametrization of $  Q_\lambda$  such that
      the principal lines are the coordinates curves. Therefore, each connected component of  $  Q_\lambda$ is principally equivalent to
      a hyperboloid of one sheet.
  
  \end{lemma}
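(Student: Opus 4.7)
The plan is to follow the template established by Lemmas \ref{lem:BQ1R4}, \ref{lem:DQ1R4}, \ref{lem:CQ1R4}, and \ref{lem:BQ2R4}: convert the intersection into an auxiliary cone $\cap$ $Q_3$, do a Morse signature analysis to pin down the topology, then transport the principal structure from the explicit parametrization \eqref{eq:AQ3R4}.

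First I would write $Q_\lambda\cap Q_3=C_\lambda\cap Q_3$, where the quadratic form
\[
C_\lambda=\Bigl(\tfrac{1}{a^2+\lambda}-\tfrac{1}{a^2}\Bigr)x^2
-\Bigl(\tfrac{1}{b^2-\lambda}-\tfrac{1}{b^2}\Bigr)y^2
-\Bigl(\tfrac{1}{c^2-\lambda}-\tfrac{1}{c^2}\Bigr)z^2
-\Bigl(\tfrac{1}{d^2-\lambda}-\tfrac{1}{d^2}\Bigr)t^2
\]
is obtained as $Q_\lambda-Q_3$. For $\lambda\in(c^2,b^2)$ the coefficients of $x^2$ and $y^2$ are negative while those of $z^2$ and $t^2$ are positive (since $c^2-\lambda<0$ and $d^2-\lambda<0$), so $C_\lambda$ has signature $(--++)$. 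Intersecting the zero set of this indefinite quadric with the two sheets of $Q_3$ (the components $x>a$ and $x<-a$) produces, by a direct sign check, exactly two connected components, one in each sheet. Each such component is a regular two-dimensional quadric of indefinite inertia type, i.e.\ diffeomorphic to a hyperboloid of one sheet.

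Next I would fix $v=\lambda$ in \eqref{eq:AQ3R4} and set $\psi_\lambda(u,w)=\psi(u,\lambda,w)$ with $u\in(b^2,\infty)$, $w\in(d^2,c^2)$. By Proposition \ref{prop:AQ3R4} the $u$- and $w$-coordinate curves are principal on $Q_3$, so in particular they are orthogonal and self-conjugate on the submanifold $v=\lambda$ with respect to the induced first and second fundamental forms inherited from \eqref{eq:gijQ3} and \eqref{eq:bijQ3}. Hence $\psi_\lambda$ is a principal parametrization of $Q_\lambda$ in each orthant. Symmetries in $x,y,z,t$ extend the parametrization across the coordinate hyperplanes, covering each connected component.

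Finally, to obtain the principal equivalence with a hyperboloid of one sheet in $\mathbb R^3$, I would argue exactly as in Lemma \ref{lem:q0r3conforme}: the metric \eqref{eq:gijQ3} at $v=\lambda$ has the form $E(u)\,du^2+G(w)\,dw^2$ with separated variables, so the substitution $d\tau_1=\sqrt{E(u)}\,du$, $d\tau_2=\sqrt{G(w)}\,dw$ (with appropriate sign) produces a conformal chart whose coordinate lines are the principal lines; this conformal normal form coincides with that obtained in Proposition \ref{prop:q1folha} for the hyperboloid of one sheet, so one builds a homeomorphism between the two that carries principal foliations to principal foliations.

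The only nontrivial checkpoint is the signature/count-of-components step: one must verify that signature $(--++)$ of $C_\lambda$ combined with the two-sheet constraint of $Q_3$ yields precisely two components of hyperboloid-of-one-sheet type (and not four, as happens in Lemma \ref{lem:DQ2R4}). This is the step I expect to require the most care, and I would confirm it by explicitly exhibiting a point in each sheet of $Q_3$ lying on $Q_\lambda$ and verifying connectedness in each sheet via the parametrization $\psi_\lambda$, which is a regular covering onto the prescribed rectangle of $(u,w)$-values.
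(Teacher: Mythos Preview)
Your proposal is correct and follows essentially the same route as the paper, which simply refers back to Lemma \ref{lem:BQ1R4}. One small correction: the induced metric at $v=\lambda$ is not literally of the separated form $E(u)\,du^2+G(w)\,dw^2$ but rather $(u-w)\bigl[E_1(u)\,du^2+G_1(w)\,dw^2\bigr]$, exactly as in Lemma \ref{lem:q0r3conforme}, so your conformal-chart argument goes through unchanged.
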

  \begin{proof} Similar to the proof of lemma \ref{lem:BQ1R4}.\end{proof}

    \begin{lemma}\label{lem:DQ3R4}
     Let $\lambda \in (d^2,c^2)$ and consider   the intersection of the quadric
     {\small  
     $$Q_\lambda(x,y,z,t)=\frac{x^2}{a^2+\lambda}-\frac{y^2}{b^2-\lambda}-
     \frac{z^2}{c^2-\lambda}-\frac{t^2}{d^2-\lambda}=1,\;  a>0,\; b>c>d>0,$$
     }
 with the quadric $Q_3=Q_0^{-1}(0)$.
 Let $ Q_\lambda= Q_\lambda^{-1}(0)\cap Q_3$.
         Then $  Q_\lambda$ has two connected components, both are diffeomorphic to a hyperboloid of two  sheets with three different axes  and there exists a  principal parametrization of $  Q_\lambda$  such that
        the principal lines are the coordinates curves. Therefore, each connected component of  $  Q_\lambda$ is principally equivalent to a hyperboloid of two sheets.
 
   \end{lemma}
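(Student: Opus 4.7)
The plan is to follow the template of Lemmas \ref{lem:DQ1R4} and \ref{lem:DQ2R4}: specialize the principal chart of Proposition \ref{prop:AQ3R4} to $w = \lambda$, identify the resulting two-parameter surface with $Q_\lambda$, and invoke Proposition \ref{prop:q2folhas} together with the conformal reparametrization of Lemma \ref{lem:q0r3conforme} to obtain the principal equivalence.

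First I would rewrite the intersection $Q_\lambda \cap Q_3$ as $C_\lambda \cap Q_3$, where $C_\lambda$ is the quadratic cone obtained by termwise subtraction of $Q_\lambda$ and $Q_3$. For $\lambda \in (d^2, c^2)$, the factor $d^2 - \lambda$ is negative while $a^2 + \lambda$, $b^2 - \lambda$ and $c^2 - \lambda$ are positive; a short algebraic check then gives $C_\lambda$ the Morse signature $(+---)$. This signature, combined with the two-sheet decomposition of $Q_3$ itself into $\{x \ge a\}$ and $\{x \le -a\}$ already recorded in Proposition \ref{prop:AQ3R4}, determines the number and ambient location of the connected components of $Q_\lambda$.

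Second, fixing $w = \lambda$ in \eqref{eq:AQ3R4} and letting $(u,v)$ range over $(b^2, \infty) \times (c^2, b^2)$ produces a map $\psi_\lambda(u,v) = \psi(u,v,\lambda)$. Since the ambient first and second fundamental forms \eqref{eq:gijQ3} and \eqref{eq:bijQ3} are diagonal in $(u,v,w)$, their restrictions at $w = \lambda$ remain diagonal in $(u,v)$, so the coordinate curves of $\psi_\lambda$ are automatically principal lines of $Q_\lambda$. The parameter range $(b^2,\infty)\times(c^2,b^2)$ and the explicit principal curvatures from \eqref{eq:AQ3p} match, after relabeling, the data of the two-sheeted hyperboloid $q_2$ of Proposition \ref{prop:q2folhas}. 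The eight orthant pieces obtained this way would then be glued via the reflection symmetries $(x,y,z,t) \mapsto (\pm x, \pm y, \pm z, \pm t)$ of $Q_3$ into the global components. To promote the intrinsic match into a principal equivalence, I would imitate Lemma \ref{lem:q0r3conforme}: the substitution $d\tau_1 = \sqrt{u/(2\xi_3(u))}\, du$, $d\tau_2 = \sqrt{-v/(2\xi_3(v))}\, dv$ brings the restricted first fundamental form into isothermal shape, and the analogous conformal chart on $q_2$ furnishes the desired homeomorphism $h_i\colon Q_\lambda^i \to q_2^{-1}(0)$ sending principal foliations to principal foliations.

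The main obstacle, as in the earlier lemmas of this type, is the bookkeeping for the global component count and for the assembly of the orthant charts. The fundamental form and signature computations are routine given \eqref{eq:AQ3R4}--\eqref{eq:bijQ3}, but one must verify carefully that the sign combinatorics of \eqref{eq:AQ3R4} pair the sixteen orthant images into exactly the two claimed global components and that each carries the principal structure of a full two-sheeted hyperboloid. Isolating this combinatorial verification, and tracking how the conformal principal coordinates extend analytically across the coordinate hyperplanes of $\mathbb R^4$, is the only part of the argument not already reduced to previously established calculations.
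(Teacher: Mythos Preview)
Your proposal is correct and follows exactly the approach the paper intends: its own proof reads in full ``Similar to the proof of lemma \ref{lem:DQ1R4}'', and you have spelled out precisely that template---the cone $C_\lambda$, the signature check, the specialization $\psi_\lambda(u,v)=\psi(u,v,\lambda)$ of the chart \eqref{eq:AQ3R4}, the comparison with Proposition \ref{prop:q2folhas}, and the conformal reparametrization in the style of Lemma \ref{lem:q0r3conforme}. The global component count and the signs under the square roots in your isothermal substitution (check the sign of $\xi_3$ on $(b^2,\infty)$ and on $(c^2,b^2)$) are the only places needing care, exactly as you already flagged.
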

   
   \begin{proof} Similar to the proof of lemma \ref{lem:DQ1R4}.
   \end{proof}

    The results above of this section are summarized in the following theorem.
    
  \begin{theorem} \label{th:Q3R4}
  Each connected component of $  Q_3$.
  has   empty  umbilic set 
  and its partially umbilic set  consists of three curves
  ${\mathcal P}_{23}^1, \; {\mathcal P}_{23}^2,\; {\mathcal P}_{12}$.  
 They are    defined in the chart $(u,v,w)$ by:
 \begin{equation}\label{eq:puq3t}
    \aligned
    E(v,w)=& {\frac { \left( {a}^{2}+{c}^{2} \right) {v}^{2}}{{b}^{2}-{c}^{2}}}+{
    \frac { \left( {a}^{2}+{d}^{2} \right) {w}^{2}}{{b}^{2}-{d}^{2}}}-1,\;\; u=0\\
   H(u,w)=&-{\frac { \left( {a}^{2}+{b}^{2} \right) {u}^{2}}{{b}^{2}-{c}^{2}}}+{
   \frac { \left( {a}^{2}+{d}^{2} \right) {w}^{2}}{{c}^{2}-{d}^{2}}}-1,
   \;\; v=0\endaligned 
    \end{equation}

    \noindent i)  \; The principal foliation  ${\mathcal F}_1  $
  is singular on ${\mathcal P}_{12} \;$ ${\mathcal F}_3 $
  is singular on ${\mathcal P}_{23}^1\cup  {\mathcal P}_{23}^2$  and  ${\mathcal F}_2  $
  is singular on $  {\mathcal P}_{12}  \cup {\mathcal P}_{23}^1\cup  {\mathcal P}_{23}^2$.

  The partially umbilic curves ${\mathcal P}_{23}^1$ and $  {\mathcal P}_{23}^2$
  are of type  $D_1$, its  partially umbilic 
  separatrix surfaces 
    span an open band  $ W_{23}$
  such that $\partial W_{ 23 }=  {\mathcal P}_{23}^1\cup  {\mathcal P}_{23}^2$.
  
  \noindent  All the leaves of    ${\mathcal F}_1  $    are arcs and diffeomorphic to  $\mathbb R.\;$
  All the leaves of    the principal foliation ${\mathcal F}_3$ outside the band $  W_{23}$ are compact.
   See illustration in Fig. \ref{fig:PC_q3_123},  top.
   
      \noindent ii)  \;  The principal foliation ${\mathcal F}_2   $ is singular at
  ${\mathcal P}_{12} \cup     {\mathcal P}_{23}^1\cup  {\mathcal P}_{23}^2$   which are partially  umbilic 
   separatrix surfaces. The umbilic separatrices $W_{23}^1$ and $W_{23}^2$ of $ {\mathcal P}_{23}^1$ and $ {\mathcal P}_{23}^2$  are 
   non bounded punctured open disks. The umbilic separatrix $W_{12}$ of  ${\mathcal P}_{12} $ is diffeomorphic to a unitary disk with  two points removed. See Fig. \ref{fig:lq3sep}.

  \noindent All the leaves of ${\mathcal F}_2$,  outside the partially umbilic surfaces separatrices, are closed.
   See illustration in Fig. \ref{fig:PC_q3_123},  bottom.

      Moreover,  $W_{23}^1$ and $W_{23}^2$ intersect $ W_{12}$ transversally along  open arcs.  See Fig. \ref{fig:lq3sep}.

  \end{theorem}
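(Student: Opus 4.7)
The proof assembles the earlier results into a global picture, following the pattern set for $Q_0$ in Theorem~\ref{th:eabcd}. The explicit description of the partially umbilic set and the vacuity of the umbilic set are already supplied by Proposition~\ref{prop:AQ3R4} and Lemma~\ref{lem:AQ3R4}; what remains is to establish (a) the Darbouxian $D_1$ transversal type at each partially umbilic arc, (b) the global topology of the leaves of each $\mathcal{F}_i$, and (c) the identification of the partially umbilic separatrix surfaces of $\mathcal{F}_2$ together with their mutual transversality.

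For (a) I would localize around a regular point of each partially umbilic arc and Taylor-expand the fundamental forms \eqref{eq:gijQ3} and \eqref{eq:bijQ3} along the loci $v=w=c^2$ and $u=v=b^2$. The integrable orthogonal net provided by Proposition~\ref{prop:AQ3R4} places the implicit differential equation defining the degenerating pair of principal fields in the standard Monge form near each arc, and the $D_1$ criterion reduces to an explicit coefficient inequality verified directly, in analogy with the computation in \cite{bsbm14}.

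For (b) I would use the two-parameter family of invariant quartic surfaces provided by Lemmas~\ref{lem:CQ3R4}, \ref{lem:BQ3R4}, and \ref{lem:DQ3R4}. Since $\mathcal{F}_1$ is spanned by $\partial/\partial u$ and $u$ sweeps the unbounded interval $(b^2,\infty)$, every $\mathcal{F}_1$-leaf is a non-compact arc diffeomorphic to $\mathbb{R}$. The foliation $\mathcal{F}_3$ is spanned by $\partial/\partial w$ on the bounded interval $w\in(d^2,c^2)$; the generic leaf closes up into a circle after crossing the coordinate hyperplanes inside this $w$-range (in the spirit of the ellipsoid $Q_0$), while the $D_1$ separatrices issuing from $\mathcal{P}_{23}^1\cup\mathcal{P}_{23}^2$ sweep out a band $W_{23}$ whose boundary is exactly those two arcs. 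The foliation $\mathcal{F}_2$ is spanned by $\partial/\partial v$ on the bounded interval $v\in(c^2,b^2)$ and degenerates at both endpoints, so every one of its leaves is singular at some partially umbilic point.

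For (c) the three separatrix surfaces $W_{12}$, $W_{23}^1$, $W_{23}^2$ of $\mathcal{F}_2$ are obtained by collecting the pair of $D_1$-separatrix branches at each regular point of the three partially umbilic arcs. The topology is read off by fibering each separatrix surface over its partially umbilic base curve: the non-compact arcs $\mathcal{P}_{23}^i$ produce the non-bounded punctured open disks $W_{23}^i$, while the closed curve $\mathcal{P}_{12}$ produces an open annulus from which the two distinguished intersection points with $W_{23}^1$ and $W_{23}^2$ must be deleted, giving the twice-punctured disk stated in the theorem. Transversality of $W_{12}$ with $W_{23}^i$ along their common arcs is then verified locally at a partially umbilic point in a Darbouxian chart: one tangent plane is spanned by $\partial/\partial v$ and a direction in the $(u,v)$-page, the other by $\partial/\partial v$ and a direction in the $(v,w)$-page, so they meet in the single line $\partial/\partial v$ and are therefore transverse in the three-dimensional hypersurface. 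The main obstacle is precisely this transversality step: the separatrix surfaces have no closed-form equation, so the argument must be assembled from the local normal form of $\mathcal{F}_2$ and globalized using the triply orthogonal family. This transversal crossing of partially umbilic separatrix surfaces is the novel principal structure absent in $Q_0$, $Q_1$, and $Q_2$.
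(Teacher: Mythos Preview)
Your overall strategy coincides with the paper's: Theorem~\ref{th:Q3R4} is presented there simply as a synthesis of Proposition~\ref{prop:AQ3R4}, Lemma~\ref{lem:AQ3R4}, and Lemmas~\ref{lem:CQ3R4}--\ref{lem:DQ3R4}, introduced by the sentence ``The results above of this section are summarized in the following theorem,'' with no further argument. Your outline is in fact more detailed than what the paper supplies, especially for the $D_1$ verification and the transversality of the separatrix surfaces.

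There is, however, one genuine error in part~(b). You write that $\mathcal{F}_2$ ``degenerates at both endpoints, so every one of its leaves is singular at some partially umbilic point.'' This is false and contradicts the very statement you are proving. The confocal chart of Proposition~\ref{prop:AQ3R4} covers only a single orthant $\{xyzt>0\}$; when a $v$-curve reaches $v=c^2$ (where $z=0$) or $v=b^2$ (where $y=0$) at a \emph{generic} fixed pair $(u_0,w_0)$, the leaf does not terminate at a singularity but passes by reflection symmetry into the adjacent orthant. It meets a partially umbilic point only in the exceptional case where in addition $w_0=c^2$ or $u_0=b^2$, i.e.\ on a codimension-one set of $(u_0,w_0)$. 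The generic $\mathcal{F}_2$-leaf therefore closes up into a circle after traversing four orthants, exactly as for $\mathcal{F}_2$ on the ellipsoid $Q_0$. This is precisely what Lemmas~\ref{lem:CQ3R4}--\ref{lem:DQ3R4} encode: the $u=\mathrm{const}$ and $w=\mathrm{const}$ slices are principally equivalent to an ellipsoid and to a hyperboloid of two sheets, and on those surfaces the corresponding foliation has closed generic leaves. Once this is corrected, your part~(c) is consistent: the separatrix surfaces are exactly the exceptional $\mathcal{F}_2$-leaves, not all of them.

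Your transversality sketch in~(c) is in the right spirit but the phrase ``a direction in the $(u,v)$-page \dots\ a direction in the $(v,w)$-page'' is not quite justified as stated, since the intersection arcs lie away from both partially umbilic loci and the confocal chart does not distinguish the separatrix surfaces by coordinate planes there. A cleaner route, closer to how the paper organizes the global picture, is to observe that the reflection symmetries force the hyperplane sections $\{y=0\}\cap Q_3$ and $\{z=0\}\cap Q_3$ to be $\mathcal{F}_2$-invariant; since these sections contain $\mathcal{P}_{12}$ and $\mathcal{P}_{23}^1\cup\mathcal{P}_{23}^2$ respectively, they carry the separatrix surfaces $W_{12}$ and $W_{23}^1\cup W_{23}^2$, and their transversality along $\{y=z=0\}\cap Q_3$ is then immediate from the transversality of the ambient hyperplanes.
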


  \begin{figure}[ht]
   \begin{center}
    \def\svgwidth{0.5\textwidth}
     \includegraphics[scale=0.18]{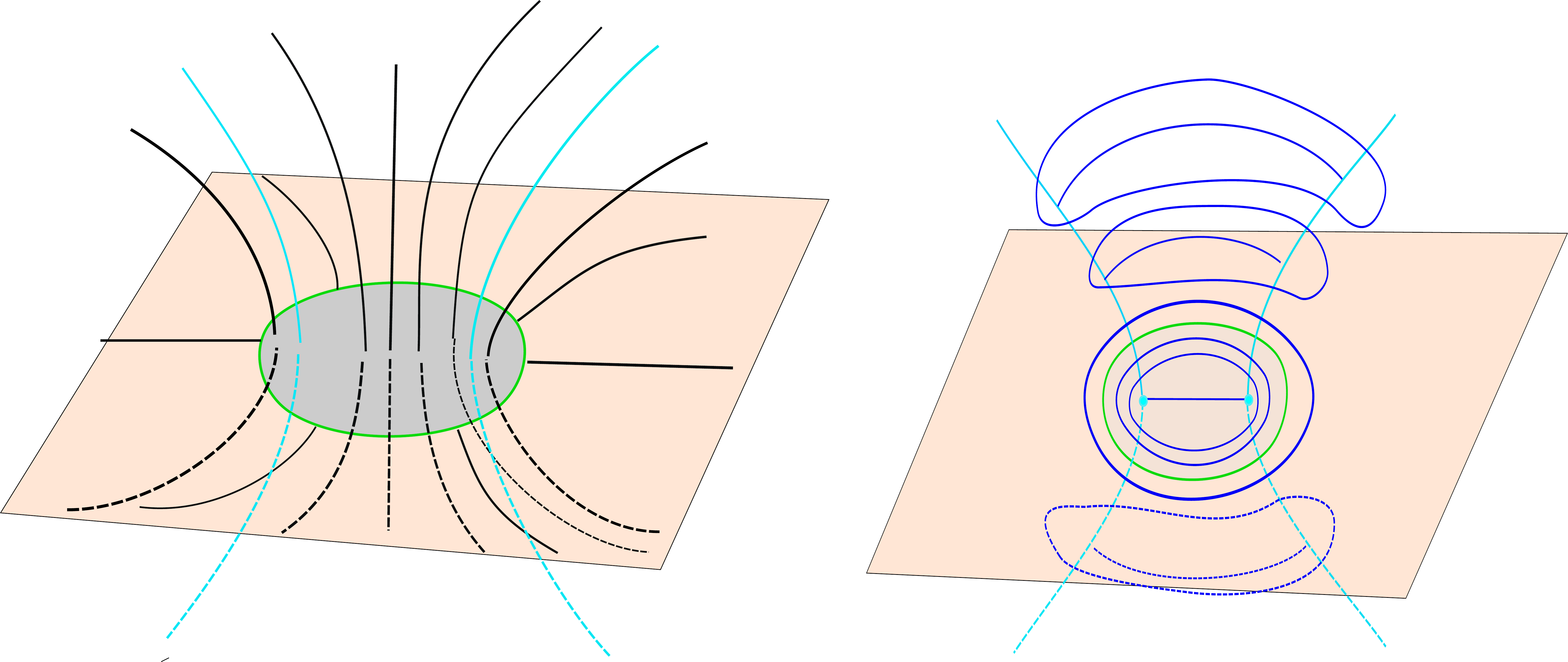}
        \def\svgwidth{0.35\textwidth}
            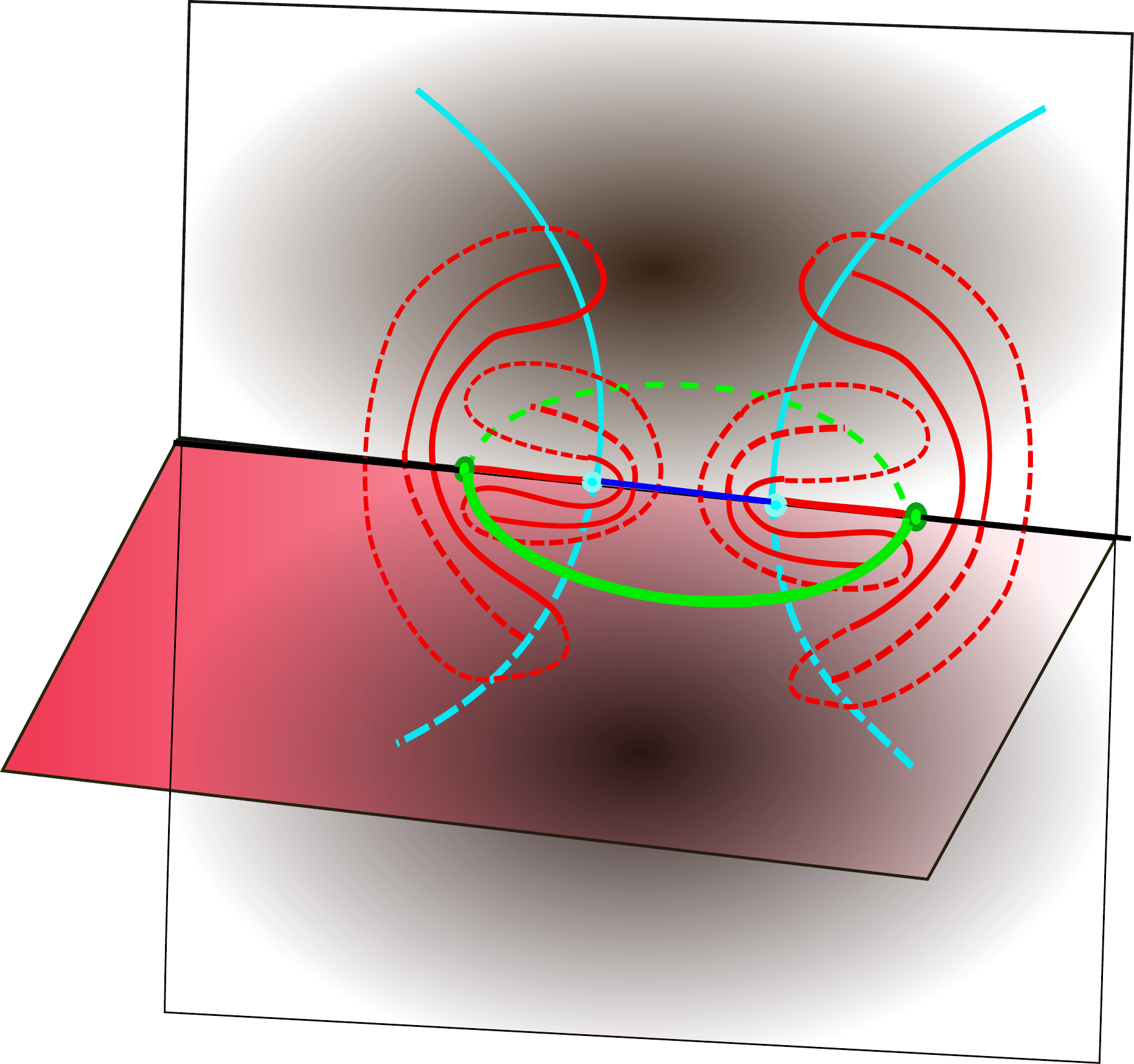
   \caption{Principal Foliations and Partially  Umbilic Curves of  $ Q_3$.
   Top 
   level: minimal  (black) and maximal (blue) curvature foliations. 
   Bottom level:  intermediate  (red)  curvature foliation.
   \label{fig:PC_q3_123} }
   \end{center}
   \end{figure}

           \begin{figure}[h]
             \begin{center}
              \def\svgwidth{0.40\textwidth}
                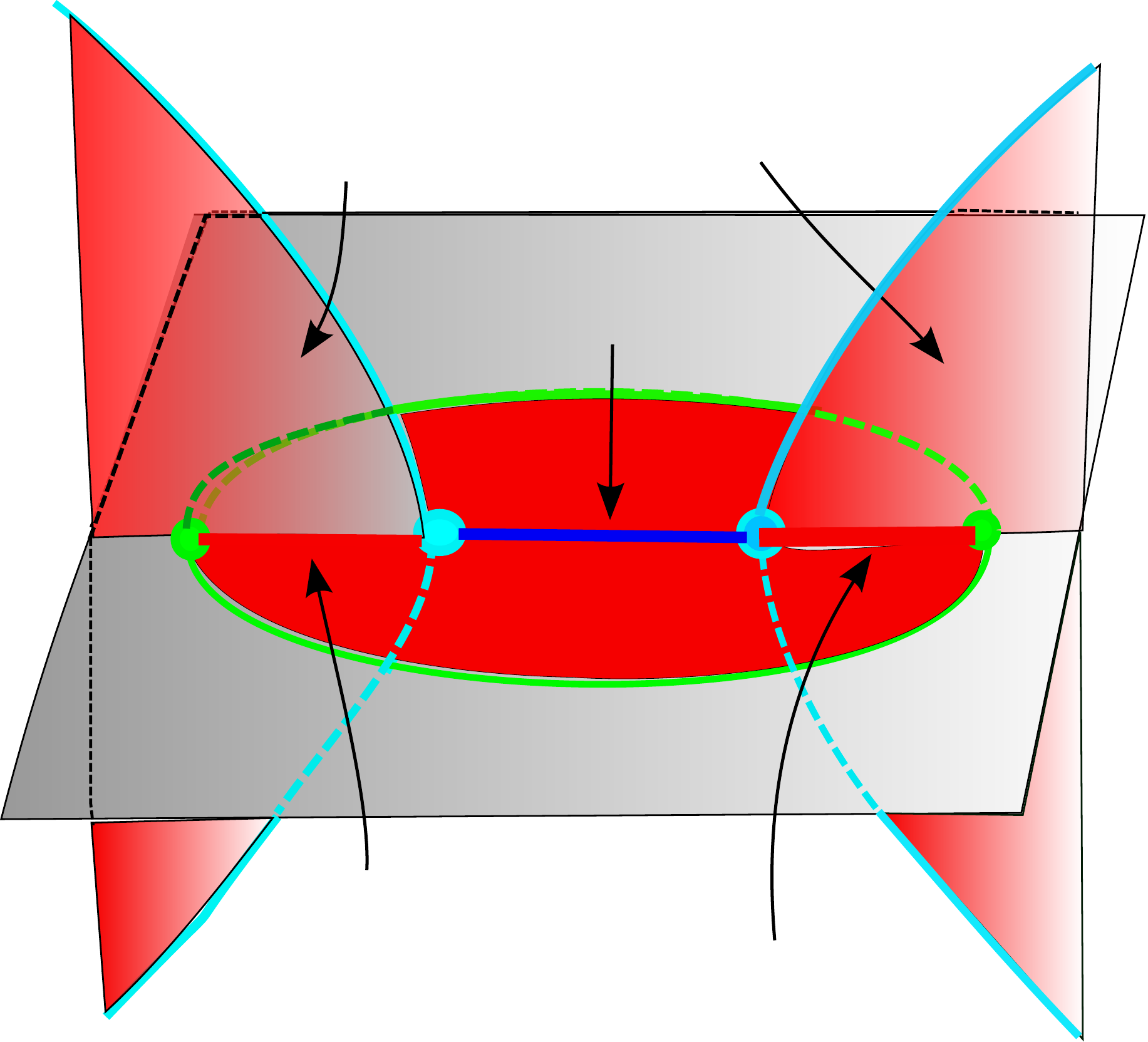
             \caption{Umbilic separatrix surfaces $W_{12},$ $ W_{23}^1$ and $W_{23}^2$ and their intersections along  open arcs.\label{fig:lq3sep} }
             \end{center}
             \end{figure}
 
  \section{Concluding Comments}\label{ss:CoCo}
  The main results presented in this paper,  sinthesized  in 
 theorems \ref{th:eabcd}, \ref{th:Q1R4}, \ref{th:Q2R4} and  \ref{th:Q3R4},  
  establish the principal configurations in all  generic quadric hypersurfaces in $\mathbb R^4$. This complements, in the generic context,  the study of quadric ellipsoids in \cite{bsbm14}, reviewed in section \ref{ss:q0abcd}. 
  
  The authors point out that case of the quadric $Q_3$ has no parallel  in the current literature. It provides the first natural algebraic example of a transversal intersection of partially umbilic separatrix  surfaces,
  known to be generic in  principal configurations in  smooth hypersurfaces of   $\mathbb R^4$ \cite{garcia-tese}. 
 
 \section*{Acknowlegment}
 The authors were supported
  by Pronex/FAPEG/CNPq Proc. 2012 10 26 7000 803. 
 \newpage
 \bibliographystyle{plain}

 \newpage
{

\vskip 0.7cm
\author{\noindent Jorge Sotomayor\\ Instituto de Matem\'atica e Estat\'{\i}stica \\
Universidade  de S\~ao Paulo\\
 Rua do Mat\~ao  1010,
Cidade Universit\'aria, CEP 05508-090,\\
S\~ao Paulo, S. P, Brazil}
 \email{sotp@ime.usp.br}

 \vskip 0.7cm

 \author{\noindent Ronaldo Garcia\\Instituto de Matem\'atica e Estat\'{\i}stica \\
Universidade Federal de Goi\'as\\ CEP 74001--970, Caixa Postal 131 \\
Goi\^ania, Goi\'as, Brazil}
 \email{ragarcia@ufg.br}
}
  
\end{document}